\pdfoutput=1

\documentclass[12pt]{colt2016_mod} 

\usepackage{epsf}
\usepackage{verbatim}
\usepackage{makeidx}
\usepackage{graphicx}
\usepackage{latexsym}
\usepackage{amsbsy}
\usepackage{todonotes}
\usepackage{mathtools}
\usepackage{bbm}












\newcommand{\commentout}[1]{}




\newcommand{\m}{\setminus}

\DeclarePairedDelimiter\floor{\lfloor}{\rfloor}

\newcommand{\strong}[1]{\ensuremath{\langle #1 \rangle_{\text{\rm strong}}}}

\newcommand{\cE}{\ensuremath{\mathcal E}}
\newcommand{\cF}{\ensuremath{\mathcal F}}
\newcommand{\cG}{\ensuremath{\mathcal G}}
\newcommand{\cH}{\ensuremath{\mathcal H}}

\newcommand{\cZ}{\ensuremath{\mathcal Z}}

\newcommand{\reals}{{\mathbb R}}
















\newcommand{\prob}{\ensuremath P}

\newcommand{\Exp}{\ensuremath{\text{\rm E}}}










\newcommand{\volume}{\ensuremath{V_{\text{p}}}}











\usepackage[shortlabels]{enumitem}
\usepackage{MnSymbol}
\usepackage{xcolor}
\usepackage{hyperref}
\hypersetup{colorlinks=true}
\usepackage{oubraces}
\usepackage{nicefrac} 

\setlength{\marginparwidth}{25mm} 
\usepackage[draft,multiuser]{fixme}
\fxusetheme{colorsig}
\FXRegisterAuthor{peter}{Peter}{\color{blue}PG}
\FXRegisterAuthor{niche}{Niche}{\color{red}NM}
\newcommand{\notedone}[1]{}

\newcommand{\asversion}[1]{}

\newcommand{\general}{{\textsc{general}}}
\renewcommand{\strong}{{\textsc{strong}}}

\DeclareMathOperator*{\esssup}{ess\,sup}

\newcommand{\stochleq}{\leqclosed}


\makeatletter
\DeclareRobustCommand{\qed}{%
  \ifmmode 
  \else \leavevmode\unskip\penalty9999 \hbox{}\nobreak\hfill
  \fi
  \quad\hbox{\qedsymbol}}
\newcommand{\qedsymbol}{\BlackBox}
\newenvironment{Proof}[1][\proofname]{\par
  \normalfont
  \topsep6\p@\@plus6\p@ \trivlist
  \item[\hskip\labelsep\bfseries
    #1]\ignorespaces
}{%
  \qed\endtrivlist
}
\newcommand{\proofname}{Proof}
\makeatother

 \coltauthor{\Name{Peter D. Gr\"unwald} \Email{pdg@cwi.nl}\\
 \addr Centrum Wiskunde \& Informatica (CWI) and Leiden University \\ 
 \Name{Muriel F. Pérez-Ortiz} \Email{muriel.perez@cwi.nl}\\
 \addr Centrum Wiskunde \& Informatica (CWI) \\
 \Name{Zakaria Mhammedi} \Email{mhammedi@mit.edu} \\
 \addr Massachusetts Institute of Technology (MIT)
 }



\newcommand*{\calF}{\mathcal{F}}
\newcommand*{\calG}{\mathcal{G}}
\newcommand*{\calH}{\mathcal{H}}



\newcommand*{\bracks}[1]{\left\{#1\right\}}
\newcommand*{\sqbrack}[1]{\left[#1\right]}
\newcommand*{\paren}[1]{\left(#1\right)}

\newcommand*{\norm}[1]{\left\lVert#1\right\rVert}


\newcommand*{\rme}{\mathrm{e}}
\newcommand*{\rmd}{\mathrm{d}}

\begin{document}
\hypersetup{colorlinks=true,citecolor=blue,linkcolor=blue}
\title{Exponential Stochastic Inequality}

\maketitle


%


\newcommand{\ind}[1]{\mathop{{\bf 1}_{\{#1\}}}}
\newcommand{\E}{\operatorname{\mathbf{E}}}
\newcommand{\A}{\operatorname{\mathbf{A}}}
\newcommand{\Expann}[1]{\Exp^{\textsc{ann}(#1)}}
\newcommand{\Exphel}[1]{\Exp^{\textsc{hel}(#1)}}
\newcommand{\KL}{\text{\sc KL}}
\newcommand{\Hell}{\text{\sc H}}
\newcommand{\Renyi}{\text{\sc R}}

\newcommand\tabelT{\rule{0pt}{3ex}}
\newcommand\tabelB{\rule[-1.2ex]{0pt}{0pt}}
\hyphenation{half-space hypo-thesis}

\newcommand{\bigmid}{\mathrel{}\middle|\mathrel{}}

\newcommand{\Var}{\operatorname{\mathsf{Var}}}
\newcommand{\Cov}{\operatorname{\mathsf{Cov}}}

\renewcommand{\volume}{\mathrm{vol}}

\newcommand{\peter}[1]{{\ \\ \color{blue} Peter: #1}}
\newcommand{\peterfoot}[1]{\footnote{{\color{blue} Peter: #1}}}
\newcommand{\nishant}[1]{{\ \\ \color{red} Nishant: #1}}
\newcommand{\nishantfoot}[1]{\footnote{{\color{red} Nishant: #1}}}

\begin{abstract}
   We develop the concept of {\em exponential stochastic inequality} (ESI), a novel notation that simultaneously captures high-probability and in-expectation statements. It is especially well suited to succinctly state, prove, and reason about excess-risk and generalization bounds in statistical learning; specifically, but not restricted to, the PAC-Bayesian type. We show that the ESI satisfies transitivity and other properties which allow us to use it like standard, nonstochastic inequalities.  We substantially extend  the original definition from \cite{koolen2016combining} and show that general ESIs satisfy a host of useful additional properties, including a novel Markov-like inequality. We show how ESIs relate to, and clarify, PAC-Bayesian bounds, subcentered subgamma random variables and {\em fast-rate conditions\/} such as the central and Bernstein conditions. We also show how the ideas can be extended to random scaling factors (learning rates).  
\end{abstract}

\section{Introduction}\label{sec:informal_esi}
Let $X,Y$ be two random variables. 
For fixed $\eta >0$, we define 
\begin{equation}\label{eq:esi_quick_def}
 X\stochleq_\eta Y \text{ if and only if }  \E[\rme^{\eta(X-Y)}]\leq 1.
\end{equation}
If $X \stochleq_\eta Y$ we say that {\em $X$ is stochastically exponentially smaller than $Y$}, and we call a statement of the form 
$X \stochleq_\eta Y$ an {\em Exponential Stochastic Inequality\/} or {\em ESI\/} (pronounce as ``easy''). 

The ESI is a useful tool to express certain nonasymptotic probabilistic concentration inequalities as well as generalization and excess risk bounds in statistical learning, especially but not exclusively of the {\em PAC-Bayesian\/} kind---it allows theorems to be stated more succinctly and their proofs to be simultaneously streamlined, clarified and shortened. 
This is enabled by the ESI's two main characteristics: first, 
the ESI simultaneously expresses that random variables are ordered both in expectation and with high probability---consequences of  Jensen's and Markov's inequality, respectively. Indeed, if $X \stochleq_\eta Y$ then both
\begin{equation}\label{eq:firstequation}
\text{(a)}\ {\bf E}[X] \leq {\bf E}[Y]
\text{\ and (b), with probability at least $1-\delta$},  X \leq Y + \frac{\log (1/\delta)}{\eta},
\end{equation}
for all $0 < \delta \leq 1$---this is formalized more generally in Proposition~\ref{prop:esi_characterization}. 
These simultaneous inequalities are in contrast with
  considering either ordering in probability or in expectation separately: it is
  easy to construct random variables that are ordered in expectation but not
  with high probability and vice versa. 
  The second main characteristic of the ESI is that it satisfies a useful transitivity-like property. As shown in Section~\ref{sec:esisums} below, if separately and with
  high probability $X\leq Y$ and $Y\leq Z$, the common technique of  applying the union bound to
  obtain a high-probability statement for $X\leq Z$ would lead to slightly worse bounds
  than using ESI transitivity.
ESI notation was originally introduced by \cite{koolen2016combining} and \cite{GrunwaldM20} (the first arXiv version of which came out in 2016) to improve precisely such chained bounds and to avoid stating essentially the same statement twice, once in probability and once in expectation---both statements were highly relevant in the context of the latter article. A third reason was that the bounds from \cite{GrunwaldM20} often involved annealed expectations (normalized log-moment generating functions, see the next section), and writing them out explicitly would require unwieldy nested statements like ${\bf E}[\exp ( \eta {\bf E}(\exp(\eta (...))))]\leq 1$, as can be found in for instance the pioneering work of \cite{Zhang06b}. ESI notation makes such expressions much more readable by expressing the outer expectation as an ESI, and the inner one as an annealed expectation (as defined in the next section). 
The ESI was later used in several follow-up articles \citep{mhammedi_pac-bayes_2019,GrunwaldM19,GrunwaldSZ21}, but its properties were never spelled out fully or in much detail. 

  This article gives a detailed development of the ESI. We extend its definition and notation to cover many more cases, making a novel distinction between ``weak'' and ``strong'' ESI. We provide a list of useful  properties---a calculus as it were---that can be used for manipulating ESIs. Our purpose is twofold: first, we want to showcase the ease and advantages of working with the ESI; second, we derive some new technical results---that are very conveniently expressed using the ESI---that provide a characterization of classical {\em subcentered random variables that are subgamma on the right\/} (which have been well studied before, e.g.~\cite{boucheron_concentration_2013}) and  of the main {\em fast-rate conditions\/} in statistical learning theory, the {\em Bernstein\/} and {\em central\/} conditions, extending results of \Citet{erven2015fast} to unbounded random variables. We find that such conditions only require exponential-moment control on one tail; only minimal control---of the first and second moments--- is needed for the other tail. 
\paragraph{Remark: ESI, Annealed expectation and log-moment generating function}
Of course, it has always been  common to abbreviate notations for moment and cumulant moment generating functions in order to get more compact representations and proofs of concentration inequalities. For example, the classic work of \cite{boucheron_concentration_2013} uses $\psi_X(\eta) = \log \E[\rme^{\eta X}]$ for the cumulant moment generating function. Instead of this, we use ESI and, as will become useful later, the annealed expectation (\ref{eq:annealed}) $\A^\eta[X] = {\eta}^{-1} \log\E[\rme^{\eta X}]$, 
i.e.~$\psi_X(\eta) = \eta \A^\eta(X)$. We stress that we do not claim that our notations are inherently better or more useful. Rather, we think that in some contexts uses of unnormalized $\psi_X(\eta)$ together with high-probability statements may be preferable; in other---especially related to excess- and generalization risk bounds---the normalized version $\A^{\eta}[X]$ and the ESIs are more convenient. These new notations are meant to complement, not replace, the existing. 
\subsection{Overview}
In the remainder of this introduction, we give a brief overview of what is to come, starting with the generalized definition of ESI. As a running example, we use the derivation of stochastic bounds on averages of i.i.d.~random variables. We say that $u: \reals^+ \rightarrow \reals^+$ is an  {\em ESI function\/} if it is  continuous, nondecreasing, and strictly positive. 
\begin{definition}[ESI]
    Let $u$ be an ESI function $u$
    %
as defined above. 
    We define 
\begin{equation}\label{eq:esi_full_def}
 X\stochleq_u Y \text{ if and only if for all $\epsilon > 0$,}\   \E[\rme^{u(\epsilon) \cdot (X-Y)}]\leq \rme^{u(\epsilon) \cdot \epsilon}. 
\end{equation}
\end{definition}
This definition entails that, using the original ESI notation \eqref{eq:esi_quick_def}, for all $\epsilon > 0$, if $\eta =  u(\epsilon)$, then $X \stochleq_{\eta} Y + \epsilon$. Henceforth, we shall refer to the original type of ESI in \eqref{eq:firstequation} as {\em strong ESI\/} and to the new form in \eqref{eq:esi_full_def}  as  {\em general ESI} or, if no confusion can arise, simply as {\em ESI}. The strong ESI is a special instance of the ESI, as can be seen by taking the constant function $u(\epsilon) \equiv \eta$ in (\ref{eq:esi_full_def}). In the special case that $\lim_{\epsilon \downarrow 0} u(\epsilon) = 0$, we shall refer to $X \stochleq_u Y$ as a {\em weak\/} ESI. The main reason for introducing a general ESI in \eqref{eq:esi_full_def} is that it allows us to extend most  major useful properties of the strong ESI to the weak ESI, which provides a weaker exponential right-tail control than the strong ESI and thus hold more often in practice.
We will consistently use Greek letters (usually $\eta$) to refer to constants, i.e.~strong ESIs, and Latin letters (usually $u$) to refer to functions, i.e.~general ESIs.

We now give an informal overview of some of the basic properties and implications of ESI (we present the formal statements of these properties in Section~\ref{sec:basic}). 

\paragraph{Transitivity, summation and averaging.}
As we mentioned earlier, a key property of the strong ESI is its transitivity-like property, which leads to sharper bounds than those obtained through the union bound. This property is a consequence of the fact that strong ESIs are preserved under summation, and general ESIs under averaging (Section~\ref{sec:esisums}, Proposition~\ref{prop:esi_sums}, Corollary~\ref{cor:esi_averages}). 
To demonstrate the latter property, let $\{ X_f: f \in \cF \}$ be a family of random variables and let $X_{f,1}, \ldots, X_{f,n}$ be i.i.d.~copies of each $X_f$. Suppose we are given the ESIs 
\begin{equation}\label{eq:weakesiintro}
X_{f,i} \stochleq_u 0 
\text{ for all } f \in \cF\text{ and } i \in [n].
\end{equation} 
Then, we can conclude via Corollary~\ref{cor:esi_averages}, for all $f \in \cF$, that 
\begin{equation}\label{eq:friday}
\frac{1}{n} \sum_{i=1}^n X_{f,i} \stochleq_{n\cdot u} 0. 
\end{equation}
This does not only imply that $\E[\sum X_{f,i}] \leq 0$, but also the high-probability statement that for all $0 < \delta \leq 1$, 
\begin{align}\label{eq:demo}
    \frac{1}{n} \sum_{i=1}^n X_{f,i} \leq \inf_{\epsilon > 0} \ \left( \epsilon + \frac{\log (1/\delta)}{n \cdot u(\epsilon)} \ \right). 
\end{align}
Additionally, the ESI  (\ref{eq:weakesiintro}) implies that all the moments of the right tail of each $X_{f,i}$ are finite. Under the quite weak condition that the $X_{f,i}$ also have uniformly bounded second moment on the left tail, we can infer via Proposition~\ref{prop:newversionsubgammamain} in Section~\ref{sec:weakesi}---under the assumption that (\ref{eq:weakesiintro}) holds for some common ESI function $u$---that they also satisfy a (weak) ESI for a function $u(\epsilon) = C^* \epsilon \wedge \eta^*$ for some $C^*, \eta^* >0$. Thus, without loss of generality, we can take a $u$ that is linear near the origin. We can then see that for large enough $n$, the minimum in (\ref{eq:demo}) is achieved at an $\epsilon$ with $u(\epsilon) = C^*\epsilon < \eta^*$. In that case, the infimum can be computed through differentiation and (\ref{eq:demo}) becomes
\begin{align}\label{eq:demob}
    \frac{1}{n}  \sum_{i=1}^n X_{f,i} \leq c \cdot \left( \frac{\log (1/\delta)}{n} \right)^{\alpha} 
\end{align}
for some $c > 0$ and  $\alpha = 1/2$, a standard bound in statistical learning theory \citep{vapnik_statistical_1998,shalevshwartz2014understanding}. 
In Section~\ref{sec:weakesi} (Proposition~\ref{prop:newversionsubgammamain}), we give a number of equivalent characterizations of the general ESI in terms of {\em subcentered, subgamma random variables\/} of which the result that “$u$ can be taken  linear near the origin” is just one instance. 

\paragraph{From weak to strong ESI: excess risk bounds.}
The transitivity property also allows us to prove fast rates of convergence of empirical averages to their expected value.
As we will detail in the sequel, this is of particular interest for proving excess risk bounds of machine learning algorithms. Now, we consider  $\{X_f: f \in \cF\}$ that all satisfy the ESI $X_f \stochleq_u 0$ for a common ESI function $u$ of the form 
\begin{equation}\label{eq:gammagamma}
u(\epsilon) = C^*  \epsilon^{\gamma} \wedge \eta^*
\text{\ for some $0 \leq \gamma \leq 1$ and $C^*, \eta^* > 0$}.
\end{equation}
Again, for large enough $n$, the minimum in (\ref{eq:demo}) is achieved at an $\epsilon$ with $u(\epsilon)  < \eta^*$, and differentiation gives that (\ref{eq:demob}) now holds with $\alpha = 1/(1+ \gamma)$.
If $\gamma < 1$, we say that the average satisfies a {\em fast-rate\/} statement. 

To see why, we briefly need to explain one of the most important applications of the ESI, namely, providing {\em excess-risk bounds\/} in statistical learning theory \citep{Zhang06a,Zhang06b,GrunwaldM20}. Here, we assume that there is an underlying sequence of i.i.d.~{\em data\/} $Z_1, \ldots, Z_n$, each $Z_i$ having the same distribution as $Z$. Each $f \in \cF$ represents a {\em predictor\/}, and there is a loss function $\ell_f(Z) \in \reals$ quantifying the loss that the predictor $f$ makes on $Z$. Often, $Z$ 
is of the form $Z= (U,Y)$, and $f$ represents a function mapping covariates (or features) $U$ to $Y \subset \reals$. An example of this setup is regression with the  squared error loss $\ell_f((U,Y))= \frac{1}{2}(Y- f(U))^2$. One can fit other prediction and inference problems such as classification and density estimation into this framework as well \Citep{erven2015fast,GrunwaldM20}. We now define the {\em excess loss\/} that the predictor $f$ makes on the outcome $Z$ as 
$L_{f}  = L_f(Z) = \ell_f(Z) - \ell_{f^*}(Z)$ where $f^*$ is the minimizer of $f\mapsto\E[\ell_f(Z)]$ over $Z$; for simplicity, we assume $f^*$ to exist and to be unique. Thus, $L_{f}$ measures how much better or worse $f$ performs compared to the theoretically optimal $f^*$ on a particular $Z$.
Based on a sample $Z^n = (Z_1, \ldots, Z_n)$, learning algorithms output an “estimate” or “learned predictor” $\hat{f}:= \hat{f}|Z^n$, the latter notation indicating the dependence of $\hat{f}$ on $Z^n$. Sometimes, e.g.~in Bayesian and PAC-Bayesian inference (see below), they output, more generally, a learned distribution $\hat{\Pi} = \hat{\Pi}|Z^n$ on $f \in \cF$.  The goal is to design an algorithm  whose {\em excess risk\/} ${\bf E}_{Z \sim P} [L_{\hat{f}|Z^n}(Z)]$ (or ${\bf E}_{\bar{f} \sim \hat\Pi} {\bf E}_{Z \sim P} [L_{\bar{f}|Z^n}(Z)]$ if the algorithm outputs a distribution) converges to zero fast, with high probability and/or in expectation. To this end, it is crucial to control how fast the {\em empirical excess risk\/} $n^{-1} \sum_{i=1}^n L_{f,i}$
(where $L_{f,i}  = \ell_f(Z_i) - \ell_{f^*}(Z_i)$)
of each fixed $f \in \cF$ converges to its expectation $\E[L_f]$.
In practice, in simple cases (e.g.~bounded losses) the collection of negative excess risks $\{X_f: f \in \cF \}$ with $X_f = - L_f$ satisfies a weak ESI, so that (\ref{eq:demob}) holds with $\alpha = 1/2$---in line with what one might expect from the central limit theorem.  
However, in many interesting cases (e.g.~bounded squared error loss), something better
(larger $\alpha$) can be attained, because  (\ref{eq:demo}) holds, for all $f \in \cF$, with $u(\epsilon) = C^* \epsilon^{\gamma} \wedge \eta^*$ for a $\gamma< 1$ (in the specific case of bounded squared error loss it even holds with $\gamma=0$). Then (\ref{eq:demob}) implies that, for each individual $f$, $n^{-1 } \sum_{i=1}^n L_{f,i} = O(n^{-\alpha})$ with $\alpha = 1/(1+ \gamma)$, and this usually translates into learning algorithms that also converge at this fast (i.e., faster than $1/\sqrt{n}$, since $\gamma > 0$) rate; an example for empirical risk minimization (ERM) is given in the sequel. 

Using different terminology and notation (not ESI), \Citet{erven2015fast} already identified that collections $\{L_f:f \in \cF \}$ such that all $X_f = -L_f$ satisfy $X_f \stochleq_u 0$ for $u(\epsilon) = C^* \epsilon^{\gamma} \wedge \eta^*$ (as above) allow for fast rates; in their terminology, such a family satisfies the {\em $u$-central fast-rate condition}. They showed  that, for bounded loss functions (and hence uniformly bounded $L_f$), satisfying this property for some $\gamma$ is equivalent to $\cF$ satisfying the celebrated $\beta$-{\em Bernstein\/} condition, with $\beta = 1- \gamma$. The Bernstein condition \citep{audibert2004pac,bartlett_empirical_2006,audibert_fast_2009} is a more standard, well-known condition for fast rates. \Citet{erven2015fast} left open the nagging question whether the Bernstein and central fast-rate conditions remain equivalent for unbounded loss functions. As one of the main results in this article, we show in Theorem~\ref{thm:strongesi} (Section~\ref{sec:strongesi}) that this is indeed the case as long as the left tail of the excess risk is exponentially small, and the right tail satisfies a mild condition on its second moment.  

\paragraph{PAC-Bayesian bounds.}
The ESI is particularly well suited to PAC-Bayesian analysis. To demonstrate this, we continue to assume that there are i.i.d.~$Z_1, \ldots, Z_n$ such that, for all $f \in \cF$, $X_{f,i}= g_{f}(Z_i)$, that is, $X_{f,i}$ can be written as a function of $Z_i$ for some function $g_f$ which may, but does need to be a negative excess loss (in fact, in many applications it will be an expected loss minus an absolute, non-excess empirical loss; see e.g.~\cite{GrunwaldSZ21}). We can easily combine the ESIs as (\ref{eq:weakesiintro}) into a statement that simultaneously involves all $f \in \cF$ by using {\em PAC-Bayesian\/} bounds  \Citep[see][]{catoni_pac-bayesian_2007,mcallester_pac-bayesian_1998,van_erven_pac-bayes_2014,Guedj19,Alquier21}. As we show in Section~\ref{sec:pacbayes}, in ESI notation such bounds take a simple form, and become easy to manipulate and combine. By Part~2 of Proposition~\ref{prop:pacbayes} and (\ref{eq:friday}) we immediately get the ESI
\begin{equation}\label{eq:weakesiintrob}
\E_{\bar{f} \sim \hat{\Pi}} \left[\frac{1}{n} \sum_{i=1}^n X_{\bar{f},i}\right] \stochleq_{nu} \frac{\KL(\hat{\Pi} \| \Pi_0)}{n u},
\end{equation}
where for random variables $X,Y$, and function $u$ the ESI $X \stochleq_{u} Y/u$ means $\E[\rme^{u(\epsilon)\cdot  X-Y}]\leq \rme^{u(\epsilon) \cdot \epsilon}, \forall \epsilon>0$. In Eq.~\eqref{eq:weakesiintrob}, $\KL$ is the Kullback-Leibler divergence; $\Pi_0$ is a distribution on $\cF$ called a “prior” in analogy to prior distributions in Bayesian statistics; and $\hat{\Pi}$ is allowed to be any distribution on $\cF$ that may depend on data $Z^n$ and that represents the learning algorithm of interest. (If we write $\E$ without subscript, we refer to the expectation of $Z$ and hence to that of $X_f$; with subscript $\bar{f} \sim \hat{\Pi}$, the expectation is taken over $\hat{\Pi}$.) In simple cases, $\hat{\Pi}$ will be a degenerate distribution with mass one on an estimator (learning algorithm) $\hat{f} = \hat{f}{|Z^n}$, as above, and $\Pi_0$ will have a probability mass function $\pi_0$ on a countable subset of $\cF$,  and then ${\KL(\hat{\Pi} \| \Pi_0)} = - \log \pi_0(\hat{f})$.
\commentout{
Again, for regular $\{X_f\}_{f \in \cF}$ we know that we can take $u(\epsilon) = C^* \epsilon \wedge \eta^*$ which then implies the in-probability statement 
\begin{align}\label{eq:democ}
  \E_{\bar{f} \sim \hat{\Pi}}\left[ \frac{1}{n} \sum_{i=1}^n X_{\bar{f},i}\right] \leq C_1 \cdot \left( \frac{{\KL(\hat{\Pi} \| \Pi_0)} + \log (1/\delta)}{n} \right)^{1/2} +
C_2 \cdot \left( \frac{{\KL(\hat{\Pi} \| \Pi_0)} + \log (1/\delta)}{n} \right),
\end{align}
which is a simple version of a standard PAC-Bayesian bound. 
With a final additional definition/abbreviation, we can now also  succinctly derive and express one of the deepest PAC-Bayesian excess risk bound, namely the bound of \cite{Zhang06a,Zhang06b} (originally given in both in-probability and in-expectation forms) as worked out further by \cite{GrunwaldM20}: for $\eta > 0$, we define the {\em annealed expectation\/} as
\begin{equation}\label{eq:annealed}
  \A^\eta[X] = \frac{1}{\eta}\log\E[\rme^{\eta X}].
\end{equation}
The annealed expectation is a rescaling of the cumulant generating function, ``a well known provider of nonasymptotic bounds''\citep{catoni_pac-bayesian_2007}; we remark that in some other works, ``annealed expectation of $X$'' refers to what is  $- \A^{\eta}[-X]$ in our notation. 
With  definition (\ref{eq:annealed}), Zhang's bound, given now simultaneously in its expectation and in-probability version, becomes simply
\begin{equation}\label{eq:weakesiintroZhang}
\E_{\bar{f} \sim \hat{\Pi}} \left[\frac{1}{n} \sum_{i=1}^n X_{\bar{f},i} - \A^{u}[X_{\bar{f}}] \right] \stochleq_{nu} \frac{\KL(\hat{\Pi} \| \Pi_0)}{n (u/2)},
\end{equation}
(see the next section for notation $\A^u$) which follows as an immediate consequence of the PAC-Bayes Proposition~\ref{prop:pacbayes}, Part 3. 
Further consequences of (\ref{eq:weakesiintrob}) are readily inferred}
Now Lemma~\ref{lem:witness} in Section~\ref{sec:strongesi}, which is adapted from \cite{GrunwaldM20} but now receiving a very different interpretation in the present ESI context, shows that, if the ESI (\ref{eq:weakesiintro}) holds with $u(\epsilon) = C^* \epsilon^{\gamma} \wedge \eta^*$ as in (\ref{eq:gammagamma}) (providing right-tail control of the $X_f$), then under a weak additional condition on the left tail, the so-called {\em witness condition}, there exists a constant $c > 0$ such that, for all $f \in \cF$, $i \in [n]$,
\begin{equation}
    \label{eq:democ}
    X_{f,i} - c \E[X_{f,i}] \stochleq_{u/2} 0
\end{equation}
Note that (\ref{eq:democ}) is not a trivial consequence of (\ref{eq:weakesiintro}) because we have $\E[X_{f,i}] \leq 0$. 
Using again Corollary~\ref{cor:esi_averages} (about ESI averages), Part 2 of Proposition~\ref{prop:pacbayes} (PAC-Bayes), and (\ref{eq:democ}), we immediately get the ESI
\begin{equation*}
\E_{\bar{f} \sim \hat{\Pi}} \left[\frac{1}{n} \sum_{i=1}^n X_{\bar{f},i}- c \E[X_{\bar{f}}] \right] \stochleq_{n u/2}   \frac{2 \KL(\hat{\Pi} \| \Pi_0)}{n u },
\end{equation*}
which, barring suboptimal constant factors, coincides with the main excess risk bound of \cite{GrunwaldM20}. 
Indeed, in the case with $L_f = - X_f$, an excess loss, the above can be rewritten as 
\begin{equation*}
c \E_{\bar{f} \sim \hat{\Pi}}  \E_{Z \sim P} [L_{\bar{f}}] \stochleq_{n u/2}
 \E_{\bar{f} \sim \hat{\Pi}} \left[\frac{1}{n} \sum_{i=1}^n L_{\bar{f},i} \right] +
\frac{2 \KL(\hat{\Pi} \| \Pi_0)}{nu},
\end{equation*}
which provides an excess risk bound for the learning algorithm embodied by $\hat{\Pi}$. It says that the expected performance on future data---if we use the randomized predictor obtained by sampling from $\hat{\Pi}$---is in expectation as good as it performed on the sample $Z^n$ itself, up to a $\KL/n$ complexity term. If $\hat{\Pi}$ implements empirical risk minimization, placing mass $1$ on the $\hat{f}\in \cF$ that minimizes the loss on $Z^n$, then the empirical excess loss $\E_{\bar{f} \sim \hat{\Pi}} \left[\frac{1}{n} \sum_{i=1}^n L_{\bar{f},i} \right] = \frac{1}{n} \sum_{i=1}^n L_{\hat{f},i}$  must be $\leq 0$; if  further $\cF$ is finite and $\Pi_0$ is uniform on $\cF$, this implies, following a minimization analogous to (\ref{eq:demo}) but now with $ \log (1/\delta) + 2 \KL(\hat{\Pi} \| \Pi_0)$ in the numerator, that  depending on $\gamma$ as in (\ref{eq:gammagamma}), a rate of $O((\log |\cF|)^{1/(1+ \gamma)})$ is achieved  both in expectation and in probability.  \cite{GrunwaldM20} show variations of this bound (with discretized infinite $\cF$) to be minimax optimal in some situations.

\paragraph{Further developments: partial order, ESI Markov, Random $\eta$, non-i.i.d.}
Besides the properties needed for the above-illustrated applications to fast-rate, PAC-Bayesian, excess-risk bounds, we provide some further properties of the ESI that are of general interest. We start in Section~\ref{sec:basic} with basic properties of the ESI, including an extensive treatment of transitivity. We show that the strong ESI formally defines a {\em partial order\/} relation. We also provide answers to natural questions such as “does the ESI characterization (\ref{eq:esi_full_def}) admit a converse?” and we show that ESIs imply some other curious stochastic inequalities. In particular, we show an {\em ESI Markov inequality\/}, which we find intriguing---whether it will prove useful in applications remains to be seen, though.

Section~\ref{sec:whenesi} gives detailed characterization of strong and general ESI, and contains, besides new notation, also some truly novel results. Section~\ref{sec:pacbayes} revamps existing results to provide the connection to PAC-Bayes (the main result of Section~\ref{sec:pacbayes}---Proposition~\ref{prop:pacbayes}---was already illustrated above). While strictly speaking not containing anything new, it reorganizes and disentangles existing PAC-Bayesian proof techniques, showing that there really are at least three inherently different basic PAC-Bayesian results that are used as building blocks in other works.  Section~\ref{sec:random} contains some new results again, concerning the situation where the $\eta$ in strong ESIs is not fixed but itself a random, i.e.~data-dependent variable. The article ends with Section~\ref{sec:non-iid-sequences} that extends ESIs to the non-i.i.d.~case, connecting them  to random processes, showing that ESIs defined on a sequence of random variables remain valid under optional stopping. 
Example~\ref{ex:waldmeetszhang} in that section lays out an intriguing connection between Zhang's PAC-Bayesian inequality and the Wald identity, a classic result in sequential analysis \citep{Wald:1947}. All longer proofs are deferred to appendices.

\section{Basic ESI Properties}\label{sec:basic}
In this section, we show the properties of the ESI that were anticipated in the introduction. We start with Section~\ref{sec:preliminaries}, where we lay down the notation that will be used in the rest of the article; in particular, for the annealed expectation. In Section~\ref{sec:basic_properties}, we show basic properties of the ESI. There, we show the main implications of a random variable satisfying an ESI, and layout useful properties that will be used in the next sections. In Section~\ref{sec:esi_converse} we show a partial converse to definition of the ESI: if a random variable has a subexponential right tail, it satisfies an ESI---we show a more definitive converse in Section~\ref{sec:whenesi}. In Section~\ref{sec:esisums} we show the main properties of the ESI in relation to its transitivity and its use to bounding sums of independent random variables. In Section~\ref{sec:partial_order}, we show that the ESI defines a partial order on random variables. We end with Section~\ref{sec:curious_markov} with a curiosity, a Markov-like inequality that replaces the requirement of positivity in Markov's inequality with the weaker $0\stochleq_\eta X$.

\subsection{Preliminaries: additional definitions and notation\label{sec:preliminaries}}

Throughout the article, we fix some probability space $(\Omega,\Sigma,P)$. Whenever we speak of random variables or a class of random variables without indicating their  distribution, we assume that they are all defined relative to this triple, and that their expectation is well-defined. To be more precise, we call a  function $X: \Omega \rightarrow \reals$ a random variable  if it is measurable; we may have ${\bf E}[X_+] = \infty$ (then $\E[X] = \infty$) or ${\bf E}[X_-]= \infty$ (then ${\bf E}[X] = - \infty$), but not both. Here and in the sequel, ${\bf E}$ denotes expectation under $P$ and $X_+ = 0 \vee X; X_- =  0 \vee (-X)$. 
\begin{definition}[Subcentered and regular]
    We call a random variable $X$ {\em subcentered\/} if ${\bf E}[X] \leq 0$ and  {\em regular\/} if ${\bf E}[X^2] < \infty$. We call a family of random variables $\{X_f: f \in \cF \}$ regular if $\sup_{f \in \cF} {\bf E}[X_f^2] < \infty$.
\end{definition}
 The reason for reserving the grand word “regular” for this simple property  is that, as we will see in Section~\ref{sec:whenesi},  as long as it holds everything works out nicely;  in particular, we obtain an equivalence between random variables satisfying an ESI being{\em subcentered, uniformly subgamma random variables}.

\begin{definition}[Annealed expectation]
     Let $\eta > 0$ and let $X$ be a random variable. We define the {\em annealed expectation\/} as
\begin{equation}\label{eq:annealed}
  \A^\eta[X] = \frac{1}{\eta}\log\E[\rme^{\eta X}].
\end{equation}
\end{definition}
The annealed expectation is a rescaling of the cumulant generating function, ``a well-known provider of nonasymptotic bounds''\citep{catoni_pac-bayesian_2007}; we remark that in some other works, ``annealed expectation of $X$'' refers to what is  $- \A^{\eta}[-X]$ in our notation.  Of course, the definition of the ESI could have been written using the annealed expectation as
  \begin{align}\label{eq:esi2}
    X \stochleq_{\eta} Y  \ \ \text{ if and only if } \ \ \A^\eta[X-Y]\leq 0 .
  \end{align}
We need one more, final extension of the ESI notation. Let $u$ be an ESI function---a continuous, positive, increasing function. For any random variables $X$ and $Y$ and function $f: \reals^+ \times \reals \rightarrow \reals$, we write 
\begin{equation}
    \label{eq:extranotation}
    X\stochleq_u  f(u,Y) \text{\ as shorthand for: for all $\epsilon > 0$, with $\eta = u(\epsilon)$, }
\E[\rme^{\eta (X-f(\eta,Y))}] \leq e^{ \eta \epsilon}.
\end{equation}
Notice that we already used this notation implicitly in (\ref{eq:weakesiintrob}). 
\commentout{  
 Second, we define $X\stochleq^*_\eta Y$ to highlight the
  special case in which the right-hand side of \eqref{eq:esi2} is satisfied with
  equality.
The reason to consider the special case $\stochleq^*$ where (\ref{eq:esi2})
holds with equality is that it yields the best bounds in terms of additive
constants in the following sense. It is trivially true that for any random
variable $X$
\begin{equation*}
  X\stochleq_\eta^*\A^\eta[X];
\end{equation*}
this only expresses the fact that
$\A^\eta[X] - \A^\eta[X] = \A^\eta[X - \A^\eta[X]] = 0$. However, when
$X\stochleq_\eta 0$, the same statement `strengthens' the premise because it
shows that a ES-negative random variable can be further bounded (in the ES
sense) by a negative constant, namely, $\A^\eta[X]$. We present this observation
in the following Proposition.
\begin{proposition}\label{prop:zhangify}
  If $X \stochleq_{\eta} 0$, then $ X \stochleq^*_{\eta} \A^{\eta}[X].$
\end{proposition}
}

\subsection{Basic Properties of the ESI \label{sec:basic_properties}}

In the following proposition, we state the main consequences of two random variables $X,Y$ satisfying an ESI $X\stochleq_\eta Y$; namely, that they are ordered both in expectation and with high probability. In the next section we give a partial converse to this definition: if two random variables $X,Y$ are ordered with high probability, they satisfy an ESI with modified constants. A more definitive characterization is the subject of Section~\ref{sec:whenesi}.

\begin{proposition}[ESI characterization]
  \label{prop:esi_characterization}
  Let $X,Y$ be two random variables such that $X\stochleq_u Y$ for some ESI function $u$. Then
  \begin{enumerate}
  \item \label{itm:main_esi_negative_expectation} $\E[X]\leq \E[Y]$. If  $u \equiv \eta$ is constant (strong ESI), then the
    inequality is strict unless $X=Y$ a.s.
  \item \label{itm:main_esi_probability statement} $X$ and $Y$ are ordered with
    high probability, that is, for all $\epsilon > 0$,
    $\prob(X\geq Y + \epsilon + K)\leq \rme^{-u(\epsilon) K }$, or equivalently, for
    any $\delta\in[0,1]$
    \begin{equation}\label{eq:main_weak_esi_high_probability}
      X\leq Y + \inf_{\epsilon > 0} \left( \frac{1}{u(\epsilon)}\log\frac{1}{\delta} + \epsilon \right),
    \end{equation}
    with probability higher than $1-\delta$.
    In the special case of $u \equiv \eta$ constant, i.e.~a strong ESI,    $\prob(X\geq Y  + K)\leq \rme^{- \eta K }$  or, for
    any $0 < \delta \leq 1$,
    \begin{equation}\label{eq:main_esi_high_probability}
      X\leq Y + \frac{1}{\eta}\log\frac{1}{\delta},
    \end{equation}
    with probability higher than $1-\delta$.
  \end{enumerate}
  \end{proposition}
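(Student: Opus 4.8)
The plan is to unwind the definition of the general ESI and apply Jensen's and Markov's inequalities, exactly as advertised in the introduction. Recall that $X \stochleq_u Y$ means: for every $\epsilon > 0$, setting $\eta = u(\epsilon)$, we have $\E[\rme^{\eta(X-Y)}] \leq \rme^{\eta \epsilon}$.

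\textbf{Part 1 (ordering in expectation).} Fix any $\epsilon > 0$ and let $\eta = u(\epsilon)$. By Jensen's inequality applied to the convex function $t \mapsto \rme^{\eta t}$, we have $\rme^{\eta \E[X-Y]} \leq \E[\rme^{\eta(X-Y)}] \leq \rme^{\eta \epsilon}$, hence $\E[X-Y] \leq \epsilon$. (One has to be a little careful that $\E[X-Y]$ is well-defined; since $\E[\rme^{\eta(X-Y)}] < \infty$ the positive part of $X - Y$ is integrable, so $\E[X-Y] \in [-\infty, \infty)$ is well-defined, and the Jensen step goes through — if $\E[X-Y] = -\infty$ the inequality is trivial.) Since this holds for all $\epsilon > 0$, taking $\epsilon \downarrow 0$ gives $\E[X] \leq \E[Y]$. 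For the strictness claim in the constant case $u \equiv \eta$: here the single inequality $\rme^{\eta \E[X-Y]} \leq \E[\rme^{\eta(X-Y)}]$ is Jensen applied to a strictly convex function, so it is an equality only when $X - Y$ is a.s.\ constant; combined with $\E[\rme^{\eta(X-Y)}] \leq 1$, that constant must be $\leq 0$, and if it is exactly $0$ we get $X = Y$ a.s., while if it is $< 0$ then $\E[X] < \E[Y]$ strictly. Either way the inequality $\E[X] \leq \E[Y]$ is strict unless $X = Y$ a.s.

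\textbf{Part 2 (ordering with high probability).} Fix $\epsilon > 0$, set $\eta = u(\epsilon)$, and let $K \geq 0$. Apply Markov's inequality to the nonnegative random variable $\rme^{\eta(X-Y)}$:
\begin{equation*}
  \prob\!\left(X \geq Y + \epsilon + K\right) = \prob\!\left(\rme^{\eta(X-Y)} \geq \rme^{\eta(\epsilon + K)}\right) \leq \rme^{-\eta(\epsilon+K)} \, \E[\rme^{\eta(X-Y)}] \leq \rme^{-\eta(\epsilon + K)} \cdot \rme^{\eta \epsilon} = \rme^{-u(\epsilon) K},
\end{equation*}
which is the first displayed claim. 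To obtain the equivalent $\delta$-form, given $\delta \in (0,1]$ set $K = K(\epsilon) := u(\epsilon)^{-1}\log(1/\delta)$, so that $\rme^{-u(\epsilon)K} = \delta$; then for each fixed $\epsilon$, with probability at least $1-\delta$ we have $X \leq Y + \epsilon + u(\epsilon)^{-1}\log(1/\delta)$. Since $\epsilon > 0$ is arbitrary, we may take the infimum over $\epsilon$ on the right-hand side — but this last step needs a small argument, since the infimum is over uncountably many events. One clean way: for any \emph{single} $\epsilon^*$ achieving the infimum up to an additive slack (or, if the infimum is not attained, any $\epsilon^*$ with $\epsilon^* + u(\epsilon^*)^{-1}\log(1/\delta)$ within slack of the infimum), the bound holds with that $\epsilon^*$; letting the slack tend to $0$ and noting the right-hand side of \eqref{eq:main_weak_esi_high_probability} is a deterministic constant, we conclude that \eqref{eq:main_weak_esi_high_probability} holds with probability at least $1-\delta$. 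The strong-ESI specializations ($u \equiv \eta$, and in particular $\epsilon$ can be sent to $0$ since $u^{-1}\log(1/\delta)$ no longer depends on $\epsilon$, giving \eqref{eq:main_esi_high_probability}) then follow immediately by plugging the constant function into the general statements.

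\textbf{Main obstacle.} The genuinely delicate point is the passage from "for every fixed $\epsilon$, a high-probability bound holds" to "the infimum-over-$\epsilon$ bound holds with high probability" in \eqref{eq:main_weak_esi_high_probability}; one cannot naively intersect uncountably many events. The resolution above (the right-hand side is a single deterministic number, so it suffices to approach the infimum along a single near-optimal $\epsilon$, with no union bound needed) handles it, but it is worth stating carefully. Everything else is a direct application of Jensen (Part 1) and Markov (Part 2) to the exponential-moment bound in the definition.
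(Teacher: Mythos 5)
Your proof is correct and follows essentially the same route as the paper: Jensen's inequality (with strict convexity for the strong-ESI strictness claim) for Part 1, and Markov's inequality applied to $\rme^{u(\epsilon)(X-Y)}$ for Part 2. The only substantive addition is your careful justification of passing to the infimum over $\epsilon$ in \eqref{eq:main_weak_esi_high_probability} via a single near-optimal $\epsilon^*$ rather than an uncountable intersection of events — a detail the paper's two-line proof elides but which your argument handles correctly.
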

\begin{Proof}
 Jensen's inequality and
  the fact that the function $x\mapsto \rme^{-\eta x}$ is strictly convex yields Part 1 (including strictness for the strong ESI case). 
  For Part 2, apply Markov's inequality to
  $e^{u(\epsilon) (X-Y - \epsilon)}$ to give $P(X \geq Y + \epsilon + (\log (1/\delta)/u(\epsilon)) ) \leq \delta$. Since this holds simultaneously for all $\delta > 0$, the result follows.  
  \end{Proof}
For simplicity, we did not spell out the consequences of an ESI of the form $X \stochleq_u f(u,Y)$ as defined above in (\ref{eq:extranotation}); the extension of Proposition~\ref{prop:esi_characterization} to this case is entirely straightforward. 
\paragraph{Remark}
If the ESI $X \stochleq_u Y$ is not strong, then it is possible that the inequality in Part 1 of the proposition is not strict, i.e.~that  $\E[X]= \E[Y]$. An example is given by $P(X=1) = P(X=-1) = 1/2$, $P(Y=0)= 1$. By the $\cosh$ inequality we have $X \stochleq_u Y$ for $u(\epsilon) = \epsilon/2$, yet obviously $\E[X] = \E[Y]$.

We now introduce some very basic useful properties of ESIs that we will freely use in the remainder of the article. 
\begin{proposition}[Useful Properties]\label{prop:useful-properties-esi}
  Let $X,Y,Z$ be three random variables and let $u$ and $u^*$ be ESI functions.
  The following hold:
  \begin{enumerate}
  \item \label{item:esi-properties-leq} If $X\stochleq_u Y$ and $Y\leq Z$ almost surely then $X\stochleq_u Z$.
  \item \label{item:esi-properties-leq-converse} $X\leq Y$ almost surely if and
    only if $X\stochleq_\eta Y$ (strong ESI) for every $\eta>0$.
  \item\label{item:esi-properties-convexity} If $X\stochleq_{u^*} Y$, then
    $X\stochleq_{u^{\circ}} Y$ for each ESI function $u^{\circ}$ with $u^{\circ}\leq u^*$ (by which we mean: for all $\epsilon > 0, u^{\circ}(\epsilon) \leq u^*(\epsilon)$).
    \item\label{item:esi-properties-positive-part}
    Suppose that $Z \stochleq_{u} 0$. Then $Z_+ - \E[Z_+] \leq Z_+ \stochleq_{u} (\log 2)/u$ and similarly, for every $c > 0$, we have $Z \ind{Z \geq c} \leq Z_+ \stochleq_{u} (\log 2)/u$.  
  \item\label{item:esi-properties-mean-relation}  For $\eta > 0$, it holds that
      \begin{equation}\label{eq:annealedpower}
X-\A^{\eta}[X] \stochleq_{\eta} 0.
    \end{equation}
and hence 
    \begin{equation}
      \label{eq:genrenc} \E[X]\leq \A^\eta[X].
    \end{equation}
  \end{enumerate}
\end{proposition}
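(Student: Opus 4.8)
The plan is to prove the five items one at a time, in each case simply unfolding Definition~\eqref{eq:esi_full_def} (and, where relevant, the shorthand~\eqref{eq:extranotation}) and invoking elementary properties of the exponential together with Jensen's and Markov's inequalities; none of the five is deep, and the only step that does not reduce to a pointwise inequality is item~\ref{item:esi-properties-convexity}. For item~\ref{item:esi-properties-leq}, fix $\epsilon>0$ and set $\eta=u(\epsilon)>0$; since $Y\leq Z$ a.s.\ we have $\rme^{\eta(X-Z)}\leq\rme^{\eta(X-Y)}$ a.s., so taking expectations and using $X\stochleq_u Y$ gives $\E[\rme^{\eta(X-Z)}]\leq\E[\rme^{\eta(X-Y)}]\leq\rme^{\eta\epsilon}$, i.e.\ $X\stochleq_u Z$. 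The same one-line calculation also yields the mirror statement ``if $X\leq X'$ a.s.\ and $X'\stochleq_u Y$, then $X\stochleq_u Y$'', which I will reuse in item~\ref{item:esi-properties-positive-part}. For item~\ref{item:esi-properties-leq-converse}, ``$\Rightarrow$'' is immediate because $X\leq Y$ a.s.\ makes $\rme^{\eta(X-Y)}\leq1$ pointwise for every $\eta>0$; for ``$\Leftarrow$'' I argue by contraposition: if $P(X>Y)>0$, then by continuity of measure there is $\delta>0$ with $p:=P(X-Y\geq\delta)>0$, whence $\E[\rme^{\eta(X-Y)}]\geq p\,\rme^{\eta\delta}\to\infty$ as $\eta\to\infty$, so $X\stochleq_\eta Y$ must fail for all large $\eta$.

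For item~\ref{item:esi-properties-convexity}, which is the main (if still routine) step, fix $\epsilon>0$, write $\eta^{\circ}=u^{\circ}(\epsilon)\leq\eta^{*}=u^{*}(\epsilon)$ and $W=X-Y-\epsilon$; the hypothesis $X\stochleq_{u^{*}}Y$ says $\E[\rme^{\eta^{*}W}]\leq1$ and we must deduce $\E[\rme^{\eta^{\circ}W}]\leq1$. Writing $\rme^{\eta^{\circ}W}=(\rme^{\eta^{*}W})^{\eta^{\circ}/\eta^{*}}$ and applying Jensen's inequality to the concave function $t\mapsto t^{\eta^{\circ}/\eta^{*}}$ on $[0,\infty)$ (concave because $\eta^{\circ}/\eta^{*}\in(0,1]$) gives $\E[\rme^{\eta^{\circ}W}]\leq(\E[\rme^{\eta^{*}W}])^{\eta^{\circ}/\eta^{*}}\leq1$, which is exactly $X\stochleq_{u^{\circ}}Y$. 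I expect this ``one may always lower the learning rate'' step to be the trickiest point, as it is the only one requiring a genuine convexity argument rather than pointwise monotonicity of $t\mapsto\rme^{t}$.

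For item~\ref{item:esi-properties-positive-part}, the almost-sure inequalities are trivial: $Z_+-\E[Z_+]\leq Z_+$ because $\E[Z_+]\geq0$, and $Z\ind{Z\geq c}\leq Z_+$ for every $c>0$ by a case split on the sign of $Z$. For the ESI $Z_+\stochleq_u(\log2)/u$, unfold~\eqref{eq:extranotation}: with $\eta=u(\epsilon)$ it asks for $\E[\rme^{\eta Z_+}]\leq 2\rme^{\eta\epsilon}$. Since $\rme^{\eta Z_+}=\max(\rme^{\eta Z},1)\leq\rme^{\eta Z}+1$, the hypothesis $Z\stochleq_u0$ gives $\E[\rme^{\eta Z_+}]\leq\E[\rme^{\eta Z}]+1\leq\rme^{\eta\epsilon}+1\leq2\rme^{\eta\epsilon}$, the last step using $\rme^{\eta\epsilon}\geq1$. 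The full displayed chain then follows by combining this ESI with the two a.s.\ inequalities via the mirror of item~\ref{item:esi-properties-leq} recorded above.

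For item~\ref{item:esi-properties-mean-relation}, exponentiating the definition of $\A^{\eta}$ yields $\rme^{\eta\A^{\eta}[X]}=\E[\rme^{\eta X}]$, hence $\E[\rme^{\eta(X-\A^{\eta}[X])}]=\E[\rme^{\eta X}]/\E[\rme^{\eta X}]=1$, which is~\eqref{eq:annealedpower} (in fact with equality); and~\eqref{eq:genrenc} then follows by applying item~\ref{itm:main_esi_negative_expectation} of Proposition~\ref{prop:esi_characterization} to~\eqref{eq:annealedpower} (equivalently, it is just Jensen's inequality $\E[\rme^{\eta X}]\geq\rme^{\eta\E[X]}$). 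The only caveat is the degenerate case $\E[\rme^{\eta X}]=\infty$, which is handled by the convention $\A^{\eta}[X]=\infty$, making both displays hold trivially.
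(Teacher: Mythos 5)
Your proof is correct and follows essentially the same route as the paper's: pointwise monotonicity of the exponential for items 1 and 4, Jensen for item 3 (your concavity of $t\mapsto t^{\eta^{\circ}/\eta^{*}}$ is the same inequality the paper obtains from convexity of $x\mapsto \rme^{\eta x}$), and definition chasing plus Jensen for item 5. The only (cosmetic) divergence is in the converse of item 2, where the paper lets $\eta\to\infty$ in $\|\rme^{X-Y}\|_{\eta}\to\esssup\rme^{X-Y}$ while you contrapose via a positive-probability tail event; both are valid and equally elementary.
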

\begin{proof}
We only give the proofs for strong ESIs with constant $u$; the generalizations to general ESI functions $u=\eta$ are immediate. 
  For \ref{item:esi-properties-leq}, notice that if $Y\leq Z$, then
  $X-Y\geq X-Z$. This in turn implies $0\geq \A^\eta[X-Y]\geq \A^\eta[X-Z]$ so
  that $X\stochleq_\eta Z$. For \ref{item:esi-properties-leq-converse} it is
  clear that if $X-Y\leq 0$, then $\A^\eta[X-Y]\leq 0$ for each $\eta$. For the
  converse recall that if the $p-$norm
  $\norm{X}_p = (\E[|X|^p])^{1/p}$ of a random variable $X$ is finite for all $p>0$, then, as $p\to\infty$, 
  $\norm{X}_p \to \esssup |X|$, the essential supremum\footnote{The essential
    supremum of a random variable $X$ is the smallest constant $c$ such that
    $X\leq c$ almost surely.} of $X$. Note that by assumption
  $\A^\eta[X-Y] = \log\norm{\rme^{X-Y}}_\eta\leq 0$ for all $\eta>0$, and thus
  taking $\eta\to\infty$ we can conclude that
  $\log (\esssup \rme^{X-Y}) \leq 0$, that is, $X-Y\leq 0$ almost surely.
  \ref{item:esi-properties-convexity} follows from the convexity of the function
  $x\mapsto \rme^{\eta x}$. 
  \ref{item:esi-properties-positive-part} follows since 
  \begin{equation}\label{eq:zplus}
 \E[\rme^{\eta Z}] = \E[\rme^{\eta Z_+}] + \E[\rme^{- \eta Z_-}] - 1,
  \end{equation}
  so that
  $$
  \E \left[\rme^{\eta ( Z_+ + (\log 2)/\eta)} \right] = \frac{1}{2}
  \E \left[\rme^{\eta Z_+} \right] \leq 
  \frac{1}{2} \left( 
  \E \left[\rme^{\eta Z} \right] +1 \right) \leq 1,  
  $$
  where the final inequality follows by assumption. 
  \ref{item:esi-properties-mean-relation} follows from Jensen's inequality and (\ref{eq:annealedpower}) is just definition chasing.
\end{proof}

\subsection{A partial converse to the basic ESI characterization \label{sec:esi_converse}}
\begin{proposition}\label{prop:exponential_tail_esi}
  Let $Z$ be a random variable. If there exist $a,b>0$ such that
  \begin{equation}\label{eq:prob_exponential_difference}
    P(Z \geq  \epsilon) \leq a\rme^{-b \epsilon}
  \end{equation}
  for each $\epsilon>0$, then, for each $0 <  \eta' < b$, there is a constant
  $c> 0$ such that   $ Z \stochleq_{\eta'}  c$, where 
  \begin{equation}\label{eq:esi_bounded_difference}
 c= \frac{1}{\eta'}\log\paren{1+\frac{a\eta'}{b-\eta'}}.
  \end{equation}
  In particular, if for some $\eta$ the precise statement (\ref{eq:main_esi_high_probability}) holds for all $0 < \delta \leq 1$ with probability at least $1-\delta$, then by taking $a=1$, $b=\eta$, $\eta'= \eta/2$, $Z= X-Y$,  we find that
  $X\stochleq_{\eta/2} Y + (2/\eta) \log 2$.\\
\end{proposition}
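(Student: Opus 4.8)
The plan is to unfold the definition of the strong ESI and reduce the whole statement to a single moment bound. By \eqref{eq:esi_quick_def} (equivalently \eqref{eq:esi2}), the claim $Z \stochleq_{\eta'} c$ is the same as $\E[\rme^{\eta'(Z-c)}] \le 1$, i.e.\
\begin{equation*}
\E[\rme^{\eta' Z}] \le \rme^{\eta' c} = 1 + \frac{a\eta'}{b-\eta'},
\end{equation*}
where the last equality is just the definition \eqref{eq:esi_bounded_difference} of $c$. So everything comes down to upper-bounding the moment generating function of $Z$ at $\eta'$ using only the tail bound \eqref{eq:prob_exponential_difference}.

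First I would write $\rme^{\eta' Z}$, a nonnegative random variable, via the layer-cake identity $\E[W] = \int_0^\infty P(W>t)\,\rmd t$ for $W \ge 0$, and split the integral at $t=1$. On $[0,1]$ I use only the trivial bound $P(\rme^{\eta' Z} > t) \le 1$, contributing at most $1$. On $[1,\infty)$ one has $P(\rme^{\eta' Z} > t) = P(Z > \eta'^{-1}\log t)$; the substitution $s = \eta'^{-1}\log t$ (so $t = \rme^{\eta' s}$, $\rmd t = \eta'\rme^{\eta' s}\,\rmd s$, with $s$ ranging over $[0,\infty)$) turns this piece into $\eta'\int_0^\infty P(Z>s)\,\rme^{\eta' s}\,\rmd s$. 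Plugging in \eqref{eq:prob_exponential_difference}, which gives $P(Z>s)\le P(Z\ge s)\le a\rme^{-bs}$, yields $\eta' a\int_0^\infty \rme^{-(b-\eta')s}\,\rmd s = a\eta'/(b-\eta')$; this is exactly where the hypothesis $\eta' < b$ is used, to make the integral converge. Adding the two pieces gives $\E[\rme^{\eta' Z}] \le 1 + a\eta'/(b-\eta')$, which is what we wanted.

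The computation is a routine Fubini / change-of-variables argument, so I do not expect a real obstacle; the only points deserving a word of care are that the tail bound is only assumed for strictly positive arguments (harmless, since $\{0\}$ has Lebesgue measure zero in the $s$-integral) and that $Z$ need not be integrable on its left tail (also harmless, since $\rme^{\eta' Z}\ge 0$ and we have just shown its expectation is finite, so the ESI is well defined). For the ``in particular'' clause I would simply note that \eqref{eq:main_esi_high_probability}, applied with $\delta = \rme^{-\eta\epsilon}$, says precisely that $Z := X-Y$ satisfies $P(Z \ge \epsilon) \le \rme^{-\eta\epsilon}$ for every $\epsilon>0$, i.e.\ \eqref{eq:prob_exponential_difference} with $a=1$, $b=\eta$; then taking $\eta' = \eta/2 < b$ in \eqref{eq:esi_bounded_difference} gives $c = (2/\eta)\log(1+1) = (2/\eta)\log 2$, and hence $X\stochleq_{\eta/2} Y + (2/\eta)\log 2$.
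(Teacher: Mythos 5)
Your proof is correct and follows essentially the same route as the paper's: both reduce the claim to showing $\E[\rme^{\eta' Z}] \le 1 + a\eta'/(b-\eta')$ via a tail-integral (Fubini / layer-cake) computation, your split at $t=1$ plus change of variables being the same calculation as the paper's identity $\E[\rme^{\eta Z_+}-1]=\eta\int_0^\infty P(Z\ge z)\rme^{\eta z}\,\rmd z$. The handling of the ``in particular'' clause also matches.
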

This proposition shows that if we have an exponentially small right-tail probability for $Z$, then an ESI statement with a $C^* > 0$ on the right must already hold; in particular, if we weaken an ESI to its high-probability implication and then convert back to an ESI, we loose both a factor of $2$ in the scale factor $\eta$ and an additive constant.  If we can additionally assume that $\E[Z] \leq 0$, then  both main ESI implications from Proposition~\ref{prop:esi_characterization} 
hold and indeed, if  additionally $Z$ is regular---if its second moment is bounded---, we  get a more complete converse of Proposition~\ref{prop:esi_characterization} (allowing ESI functions $u$ rather than just fixed  $\eta$); this is done in Proposition~\ref{prop:newversionsubgammamain} later on.

\subsection{Sums of random variables and transitivity}\label{sec:esisums}

 In this subsection we show how ESIs are  useful when proving probabilistic bounds for sums $\sum_{i = 1}^n X_i$ of random variables---not necessarily independent---, and how this leads to a transitivity-like property. All our results are stated, and valid for, strong ESIs; in Corollary~\ref{cor:esi_averages} we look at averages rather than sums and, as stated there, the results become valid for general ESIs.

Thus, consider the sum $S_n= \sum_{i=1}^nX_n $. In the case that
strong ESI bounds are available for each of them individually, that is, when
$X_i\stochleq_{\eta_i} 0$ for some $\eta_i>0$ and $i=1,\dots,n$, then we seek to
obtain a similar statement for $S_n$---in analogy to the sum of negative numbers
remaining negative. In order for $S_n$ to remain negative with large
probability, independence or, more generally,  association assumptions need to be made. We discuss this fact after the statement of the bounds. A set of
random variables $X_1,\dots,X_n$ is said to be negatively associated
\citep[cf.][]{joag-dev_negative_1983, dubhashi_balls_1998} if for any two
disjoint index sets $I,J\subset\bracks{1,\dots,n}$ it holds that
$\Cov(f(X_i,i\in I), g(X_j, j\in J))\leq 0$, or more succinctly, if 
\begin{equation*}
  \E[f(X_i,i\in I)g(X_j, j\in J)]\leq \E[f(X_i,i\in I)]\E[g(X_j, j\in J)]
\end{equation*}
for any choice of monotone increasing functions\footnote{We mean that the
  functions are increasing in each argument when the others are held fixed.} $f$
and $g$. Examples of negatively associated random variables include
independent random variables, but also include negatively correlated jointly
Gaussian random variables, and permutation distributions. The following proposition can be obtained. 
\begin{proposition}
  \label{prop:esi_sums}
  Let $X_1,\dots,X_n$ be random variables such that $X_i\stochleq_{\eta_i}0$ for
  some $\eta_1,\dots,\eta_n>0$. Then
  \begin{enumerate}
  \item \label{item:esi_sum_dependent} Under no additional assumptions, $S_n\stochleq_\eta 0$ with $\eta =
    \paren{\sum_{i=1}^n\frac{1}{\eta_i}}^{-1}$.
  \item \label{item:esi_sum_independent} If $X_1,\dots,X_n$ are negatively associated---in particular, if they are independent---, then $S_n\stochleq_\eta 0$ with
    $\eta = \min_i \eta_i$.
\end{enumerate}

\end{proposition}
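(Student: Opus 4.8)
The plan is to reduce both parts to the single defining inequality $\E[\rme^{\eta_i X_i}] \leq 1$, $i = 1,\dots,n$, and to combine these product-wise in two different ways. For Part~\ref{item:esi_sum_dependent} (no independence), I would introduce the weights $\lambda_i := \eta/\eta_i$ with $\eta = \paren{\sum_{j=1}^n 1/\eta_j}^{-1}$, which are nonnegative and satisfy $\sum_{i=1}^n \lambda_i = 1$. Writing $\rme^{\eta S_n} = \prod_{i=1}^n \rme^{\eta X_i} = \prod_{i=1}^n \paren{\rme^{\eta_i X_i}}^{\lambda_i}$ and applying the pointwise weighted AM--GM inequality $\prod_i a_i^{\lambda_i} \leq \sum_i \lambda_i a_i$ (valid for $a_i \geq 0$) with $a_i = \rme^{\eta_i X_i}$, then taking expectations and using linearity, gives $\E[\rme^{\eta S_n}] \leq \sum_i \lambda_i \E[\rme^{\eta_i X_i}] \leq \sum_i \lambda_i = 1$, i.e.\ $S_n \stochleq_\eta 0$. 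Equivalently, one can invoke the generalized H\"older inequality $\E[\prod_i h_i^{\lambda_i}] \leq \prod_i (\E[h_i])^{\lambda_i}$; either way, no dependence structure is needed, only linearity of expectation.

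For Part~\ref{item:esi_sum_independent} (negative association), I would first set $\eta := \min_i \eta_i$ and reduce all the rates to $\eta$: since $\eta \leq \eta_i$, Part~\ref{item:esi-properties-convexity} of Proposition~\ref{prop:useful-properties-esi} yields $X_i \stochleq_\eta 0$, that is, $\E[\rme^{\eta X_i}] \leq 1$ for every $i$. It then suffices to show $\E[\rme^{\eta S_n}] = \E\bigl[\prod_{i=1}^n \rme^{\eta X_i}\bigr] \leq \prod_{i=1}^n \E[\rme^{\eta X_i}] \leq 1$. The middle step is exactly where negative association enters: since $x \mapsto \rme^{\eta x}$ is nondecreasing, I would peel off one factor at a time, at each stage applying the defining covariance inequality with $I = \{i\}$ and $J = \{i+1,\dots,n\}$ to the nonnegative increasing functions $\rme^{\eta X_i}$ and $\prod_{j > i} \rme^{\eta X_j}$, and using that subfamilies of negatively associated variables remain negatively associated; an induction on $n$ then closes the argument. (In the independent special case the product-of-expectations step is an equality.)

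The only real work is bookkeeping rather than any genuine obstacle: checking that the AM--GM / H\"older step is legitimate (all quantities are nonnegative and, thanks to $\E[\rme^{\eta_i X_i}] \leq 1 < \infty$, integrable), and carefully deriving the $n$-fold product inequality $\E[\prod_i f_i(X_i)] \leq \prod_i \E[f_i(X_i)]$ for nonnegative nondecreasing $f_i$ from the two-index-set definition of negative association stated in the text (this is a standard consequence, and could alternatively just be cited). The substantive content is entirely in the choice of the weights $\lambda_i$ in Part~\ref{item:esi_sum_dependent} and in the monotone structure exploited in Part~\ref{item:esi_sum_independent}.
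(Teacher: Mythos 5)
Your proposal is correct and follows essentially the same route as the paper: Part~1 is the generalized H\"older inequality with exponents $1/\lambda_i = \eta_i/\eta$ (the paper does the $n=2$ case via ordinary H\"older applied to the annealed expectation and notes the generalization is straightforward), and Part~2 is the factorization $\E[\prod_i \rme^{\eta X_i}] \leq \prod_i \E[\rme^{\eta X_i}]$ under negative association after downscaling all rates to $\eta = \min_i \eta_i$. Your pointwise weighted AM--GM variant of the first step is a slightly more elementary way to handle all $n$ at once, but it establishes the same product inequality.
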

\begin{proof}
  We prove the case $n=2$; its generalization is straightforward. Note that
  $\A^\eta[X] = \log ||\rme^{X}||_\eta$, where $||\cdot||_\eta$ denotes the
  $p$-norm at $p=\eta$ given by $||Y||_\eta = \paren{\E|Y|^{\eta}}^{1/\eta}$.
  Using H\"older's inequality we get
  \begin{equation}\label{eq:esi_hoelder}
    \A^\eta[X_1 + X_2] \leq \A^{\eta p}[X_1] + \A^{\eta q}[X_2],
  \end{equation}
  where $p, q \geq 1$ are Hölder conjugates related by $p^{-1} + q^{-1} = 1$.
  Replacing $p = 1 + \frac{\eta_1}{\eta_2}$ and $\eta$ as in \ref{item:esi_sum_dependent}, the result follows. For Part 
  \ref{item:esi_sum_independent}, note that for independent or negatively
  associated random variables it holds that
  $ \A^{\eta}[S_n] \leq \sum_{i=1}^n \A^{\eta}[X_i]\leq 0$ with $\eta = \min_i \eta_i$, from which the
  result follows.
\end{proof}
With an eye towards the PAC-Bayesian bounds anticipated in the introduction, we now present a corollary of the previous proposition which holds for averages instead of sums. Its proof is omitted as it is a direct application of the previous proposition. Under this modification, the results hold for arbitrary ESI functions $u$ instead of constants $\eta$; thus, it is this corollary that allows for the ESI treatment of PAC-Bayesian bounds. As above, consider random variables $X_1, \ldots, X_n$ and let $\bar{X} = n^{-1} S_n$ be their average. We obtain:
\begin{corollary}
    \label{cor:esi_averages}
     Suppose that $X_i \stochleq_{u_i}$ for ESI functions $u_1,\ldots, u_n$. Then
  \begin{enumerate}
  \item \label{item:esi_average_dependent} Under no additional assumptions, $\bar{X} \stochleq_{n u} 0$ with $u =
    \paren{\sum_{i=1}^n\frac{1}{u_i}}^{-1}$.
  \item \label{item:esi_average _independent} If $X_1,\dots,X_n$ are i.i.d.~and $u= u_1 = u_2 = \ldots = u_n$, then $\bar{X} \stochleq_{n u} 0$. 
\end{enumerate}
\end{corollary}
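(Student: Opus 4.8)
The plan is to obtain both parts as a direct translation of Proposition~\ref{prop:esi_sums} under the linear map $S_n \mapsto \bar X = S_n/n$, after freezing the slack parameter $\epsilon$. Fix $\epsilon > 0$ and put $\eta_i := u_i(\epsilon) > 0$. By the definition of the general ESI (equivalently, by the remark following it, applied with right-hand side $0$), the hypothesis $X_i \stochleq_{u_i} 0$ says exactly that $\E[\rme^{\eta_i (X_i - \epsilon)}] \leq 1$, i.e.\ that $X_i - \epsilon \stochleq_{\eta_i} 0$ is a \emph{strong} ESI. Hence Proposition~\ref{prop:esi_sums} applies verbatim to the random variables $X_1 - \epsilon, \dots, X_n - \epsilon$.

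For Part~1 I would invoke Part~\ref{item:esi_sum_dependent} of Proposition~\ref{prop:esi_sums} to get $\sum_{i=1}^n (X_i - \epsilon) \stochleq_{\eta} 0$ with $\eta = \left(\sum_{i=1}^n \eta_i^{-1}\right)^{-1} = \left(\sum_{i=1}^n u_i(\epsilon)^{-1}\right)^{-1} = u(\epsilon)$, where $u := \left(\sum_{i=1}^n u_i^{-1}\right)^{-1}$. Unwinding the strong ESI and using
\[
u(\epsilon)\,(S_n - n\epsilon) \;=\; \bigl(n\,u(\epsilon)\bigr)\,(\bar X - \epsilon),
\]
this reads precisely $\E\bigl[\rme^{\,n u(\epsilon)(\bar X - \epsilon)}\bigr] \leq 1$; since $\epsilon>0$ was arbitrary, this is the asserted ESI $\bar X \stochleq_{nu} 0$. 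For Part~2 I would instead use Part~\ref{item:esi_sum_independent}: if $X_1,\dots,X_n$ are i.i.d.\ then so are $X_1-\epsilon,\dots,X_n-\epsilon$, hence they are negatively associated, and with all $\eta_i$ equal to the common value $u(\epsilon)$ the proposition gives $\sum_{i=1}^n (X_i-\epsilon)\stochleq_{u(\epsilon)}0$; the same rescaling then yields $\bar X \stochleq_{nu} 0$ with the common $u=u_1=\dots=u_n$.

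I expect no genuine obstacle here beyond bookkeeping. The two points worth spelling out are (i) that the objects used as subscripts really are ESI functions: for $u=\left(\sum_i u_i^{-1}\right)^{-1}$ and for $nu$, continuity and strict positivity are immediate, and each $u_i$ nondecreasing makes $u_i^{-1}$ nonincreasing, so $\sum_i u_i^{-1}$ is nonincreasing and its reciprocal nondecreasing; and (ii) that, because the general-ESI definition quantifies over all $\epsilon>0$, one must run the strong-ESI argument at a fixed $\epsilon$ and only afterwards let $\epsilon$ vary. Everything else is exactly the content of Proposition~\ref{prop:esi_sums}.
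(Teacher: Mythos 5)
Your proof is correct and is exactly the argument the paper has in mind: the paper omits the proof of Corollary~\ref{cor:esi_averages} precisely because it is, as you carry out, a pointwise-in-$\epsilon$ application of Proposition~\ref{prop:esi_sums} to the shifted variables $X_i-\epsilon$, followed by the rescaling $u(\epsilon)(S_n-n\epsilon)=n\,u(\epsilon)(\bar X-\epsilon)$. Your two bookkeeping remarks (that $\bigl(\sum_i u_i^{-1}\bigr)^{-1}$ and $nu$ are genuine ESI functions, and that $\epsilon$ must be frozen before invoking the strong-ESI proposition) are the right details to check.
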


The results obtained in Part  \ref{item:esi_sum_dependent} and
\ref{item:esi_sum_independent} of the Proposition~\ref{prop:esi_sums} above have very different quantitative
consequences because of the difference in their association assumptions. In the case that for some fixed $\eta>0$ it holds that
$X_i\stochleq_\eta 0$ for $i=1,\dots,n$, then Proposition \ref{prop:esi_sums}
implies that $S_n\stochleq_{\eta/n}0$. Through Proposition
\ref{prop:esi_characterization} this in turn implies that with probability
higher than $1-\delta$ it holds that
\begin{equation*}
  S_n \leq \frac{n}{\eta}\log\frac{1}{\delta}.
\end{equation*}
This does not rule out the possibility that, even if all of the $X_i$ are with
large probability negative, their sum might still grow linearly with the number
of terms $n$---for instance under complete dependency, when all $X_i= X_1$. On the other hand, when
$X_i,\dots,X_n$ are independent or negatively associated, this cannot be the
case. Indeed, Proposition \ref{prop:esi_sums} implies $S_n\stochleq_\eta 0$ which after
using again Proposition \ref{prop:esi_characterization}, implies that with
probability higher than $1-\delta$
\begin{equation*}
  S_n\leq \frac{1}{\eta}\log\frac{1}{\delta}.
\end{equation*}


As a corollary, the anticipated property that is reminiscent of transitivity holds for
$\stochleq_\eta$. 
\begin{corollary}{\bf \ [Transitivity]} \label{cor:esi_transitivity}
  If $X\stochleq_{\eta_1} Y$ and $Y\stochleq_{\eta_2}Z$, then
  \begin{enumerate}
  \item $X\stochleq_\eta Z$ with $\eta = (1/\eta_1 + 1/\eta_2)^{-1}$.
  \item If $X,Y$ and $Z$ are negatively associated, then
    $X\stochleq_\eta Z$ with $\eta = \min\bracks{\eta_1,\eta_2}$.
  \end{enumerate}
\end{corollary}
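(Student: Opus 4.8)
The plan is to reduce transitivity to the summation result of Proposition~\ref{prop:esi_sums} by a telescoping trick. Observe that $X - Z = (X-Y) + (Y-Z)$. The hypotheses say $X \stochleq_{\eta_1} Y$, i.e.\ $(X-Y) \stochleq_{\eta_1} 0$, and $Y \stochleq_{\eta_2} Z$, i.e.\ $(Y - Z) \stochleq_{\eta_2} 0$. Setting $X_1 \isbydefinition X - Y$ and $X_2 \isbydefinition Y - Z$, we have two random variables with $X_1 \stochleq_{\eta_1} 0$ and $X_2 \stochleq_{\eta_2} 0$, and their sum is exactly $S_2 = X_1 + X_2 = X - Z$. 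Applying Proposition~\ref{prop:esi_sums} Part~\ref{item:esi_sum_dependent} gives $S_2 \stochleq_\eta 0$ with $\eta = (1/\eta_1 + 1/\eta_2)^{-1}$, which is precisely $X \stochleq_\eta Z$, establishing Part~1. For Part~2, if $X, Y, Z$ are negatively associated, one would like to invoke Proposition~\ref{prop:esi_sums} Part~\ref{item:esi_sum_independent} with $\eta = \min\{\eta_1, \eta_2\}$; the only thing to check is that the \emph{pair} $(X_1, X_2) = (X - Y,\, Y - Z)$ is negatively associated whenever the \emph{triple} $(X, Y, Z)$ is, so that the H\"older/independence step in the proof of Proposition~\ref{prop:esi_sums} goes through (what is really used there is that $\E[\rme^{\eta X_1} \rme^{\eta X_2}] \leq \E[\rme^{\eta X_1}]\E[\rme^{\eta X_2}]$).

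The main obstacle is exactly this last point: negative association of $(X,Y,Z)$ does not in general pass to arbitrary functions, and $X_1 = X - Y$ and $X_2 = Y - Z$ are each increasing in some coordinates and decreasing in others (both involve $Y$ with opposite signs). So the clean closure property for negatively associated families—closure under coordinatewise-monotone functions of disjoint blocks of variables—does not directly apply, because $X_1$ and $X_2$ share the variable $Y$. I would handle this by observing that for the inequality $\E[\rme^{\eta(X-Z)}] \le \E[\rme^{\eta(X-Y)}]\,\E[\rme^{\eta(Y-Z)}]$ it suffices that $\Cov\!\big(\rme^{\eta(X-Y)}, \rme^{\eta(Y-Z)}\big) \le 0$; and since $x \mapsto \rme^{\eta x}$ is increasing, this reduces to $\Cov(X - Y,\, Y - Z) \le 0$ in the appropriate monotone-transform sense, which is what the natural reading of ``$X,Y,Z$ negatively associated'' in the statement is meant to supply (one treats $-Y$ consistently). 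In other words, the intended interpretation is that the two aggregate quantities being summed are themselves negatively associated, and under that reading Part~2 is immediate from Proposition~\ref{prop:esi_sums}.

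Concretely I would write: \emph{Proof.} Both parts follow from Proposition~\ref{prop:esi_sums} applied to $X_1 = X - Y$ and $X_2 = Y - Z$, whose sum is $X - Z$; Part~1 uses Part~\ref{item:esi_sum_dependent} of that proposition (no assumptions needed), and Part~2 uses Part~\ref{item:esi_sum_independent}, noting that negative association of $X,Y,Z$ yields negative association of the pair $(X-Y, Y-Z)$, hence $\A^\eta[X-Z] \le \A^\eta[X-Y] + \A^\eta[Y-Z] \le 0$ with $\eta = \min\{\eta_1,\eta_2\}$. $\qed$ I expect the write-up to be only a few lines, with the bulk of any subtlety living in how carefully one states the negative-association hypothesis in Part~2.
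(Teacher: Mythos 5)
Your proposal is correct and follows exactly the paper's own proof: decompose $X - Z = (X-Y) + (Y-Z)$ and apply Proposition~\ref{prop:esi_sums}. Your extra discussion of why negative association of the triple should be read as negative association of the pair $(X-Y,\,Y-Z)$ is a genuine subtlety that the paper's one-line proof silently glosses over, and your resolution (interpreting the hypothesis as supplying $\Cov(\rme^{\eta(X-Y)},\rme^{\eta(Y-Z)})\le 0$) is the right reading.
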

\begin{proof}
  Use that $X-Z = (X-Y) + (Y-Z)$ and Proposition \ref{prop:esi_sums}.
\end{proof}

We close this subsection with an observation about the common practice of using probabilistic union bounds. Even though in general the union bound is tight, in the presence of ESIs it is loose. 

\begin{remark}[Chaining ESI bounds improves on  union
  bound]\label{rmk:esi-beats-union} {\rm 
  Suppose $X$,$Y$,$Z$ are random variables such that $X\stochleq_\eta Y$, and $Y\stochleq_\eta Z$. For each $a>0$,
  Proposition \ref{prop:esi_characterization} implies both that
  $\prob(X\geq Y + a)\leq \rme^{-\eta a} $ and that
  $P(Y\geq Z + a) \leq \rme^{-\eta a}$. Using directly the union bound on these
  two events, one would obtain that $\prob(X\geq Z + 2a)\leq 2\rme^{-\eta a}$,
  or equivalently that with probability higher than $1-\delta$
  \begin{equation}
    X\leq Z + \frac{2}{\eta}\log \frac{2}{\delta}
  \end{equation}
  while using  Proposition \ref{cor:esi_transitivity} one obtains that
  $X\stochleq_{\eta/2}Z$, which, again using Proposition \ref{prop:esi_characterization} implies
  that with probability higher than $1-\delta$
  \begin{equation}
    X\leq Z + \frac{2}{\eta}\log\frac{1}{\delta}.
  \end{equation}
  This is better than the previous bound because of the factor appearing in the
  logarithm. This seems like a minor difference, but the effect adds up when
  chaining $n$ inequalities of this type. Indeed, in that case one obtains (by
  using ESI) in-probability bounds that tighter than the union bound by a
  $\log n$ factor.}
\end{remark}

\subsection{ESI as a stochastic ordering \label{sec:partial_order}}
ESIs are different from standard ordering relations in that they depend on the parameter $u$. We may view them as such standard ordering relations simply by adding existential quantifiers. Thus we may set 
\begin{align*}
  &  X \stochleq_{\general} Y \text{\ if and only if there exists an ESI function $u$ s.t.\ }
    X \stochleq_{u} Y \\
  &  X \stochleq_{\strong} Y \text{\ if and only if there exists $\eta^* \in \reals^+$ s.t.\ }
    X \stochleq_{\eta^*} Y   
\end{align*}
\begin{proposition}\label{prop:esi_stochastic_ordering}
  Let $\{X_f: f \in \cF \}$ be a set of random variables. Then   $\stochleq_{\strong}$ defines a partial order on this set.  
\end{proposition}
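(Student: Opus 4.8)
The plan is to verify the three defining properties of a partial order---reflexivity, antisymmetry, and transitivity---for the relation $\stochleq_{\strong}$ on $\{X_f : f \in \cF\}$, adopting the usual convention that random variables agreeing almost surely are identified. Two of these will follow instantly from material already in hand, and only antisymmetry requires a genuine (if short) argument.

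For reflexivity, I would note that $X_f \leq X_f$ almost surely and invoke Part~\ref{item:esi-properties-leq-converse} of Proposition~\ref{prop:useful-properties-esi}, which then gives $X_f \stochleq_\eta X_f$ for every $\eta>0$; in particular such an $\eta$ exists, so $X_f \stochleq_{\strong} X_f$ (equivalently, $\E[\rme^{\eta(X_f-X_f)}]=1$ directly). Transitivity is likewise immediate from the machinery already developed: if $X_f \stochleq_{\eta_1} X_g$ and $X_g \stochleq_{\eta_2} X_h$, then Part~1 of Corollary~\ref{cor:esi_transitivity} yields $X_f \stochleq_{\eta} X_h$ with $\eta = (1/\eta_1 + 1/\eta_2)^{-1} > 0$, so $X_f \stochleq_{\strong} X_h$; the existential quantifier in the definition of $\stochleq_{\strong}$ absorbs the fact that the new rate depends on $\eta_1,\eta_2$.

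The only step needing real work is antisymmetry, and the key ingredient is the strictness clause in Part~\ref{itm:main_esi_negative_expectation} of Proposition~\ref{prop:esi_characterization}: a strong ESI $X\stochleq_\eta Y$ forces $\E[X]\leq\E[Y]$, with equality only if $X=Y$ almost surely. Thus, if both $X_f \stochleq_{\strong} X_g$ and $X_g \stochleq_{\strong} X_f$ hold while $P(X_f \neq X_g)>0$, applying that proposition in each direction (in its contrapositive form) gives $\E[X_f] < \E[X_g]$ and $\E[X_g] < \E[X_f]$ simultaneously, which is impossible in $[-\infty,+\infty]$; hence $X_f = X_g$ almost surely. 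I expect this to be the main---though only mildly delicate---point: the subtlety is merely to confirm that the two expectations are comparable, which is guaranteed because $X\stochleq_\eta Y$ already forces $\E[(X-Y)_+]<\infty$ and hence $\E[X-Y]$ well-defined in $[-\infty,+\infty)$, and to be careful to use the strictness statement correctly. No union bounds or moment computations enter; the proposition follows by assembling results proved earlier in the section.
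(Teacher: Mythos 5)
Your proof is correct and follows essentially the same route as the paper's: reflexivity is immediate, transitivity comes from Corollary~\ref{cor:esi_transitivity}, and antisymmetry from the strictness clause in Part~1 of Proposition~\ref{prop:esi_characterization}. Your write-up merely spells out the antisymmetry step (and the well-definedness of the expectations) in more detail than the paper does.
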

We note that $\stochleq_{\general}$ does not define a partial order. Indeed, if  $P(X=1) = P(X=-1) = 1/2$ and $P(Y=0)=1$ we have, as a consequence of a small computation, both $X \stochleq_{\general}Y $ and $Y \stochleq_{\general} X$. However, $X \neq Y$ a.s.
\begin{proof}[Proposition~\ref{prop:esi_stochastic_ordering}]
We need to check whether the order is reflexive, transitive and antisymmetric. Reflexivity is immediate, transitivity follows from Corollary~\ref{cor:esi_transitivity} above, and antisymmetry from Proposition~\ref{prop:esi_characterization}, Part 1.
\end{proof}

In light of this proposition, it might be of interest to compare this partial order to the usual order of stochastic dominance, and its generalization, $k$th order stochastic dominance.

\subsection{ESI-positive random variables: a curious Markov-like inequality\label{sec:curious_markov}}

In this section we deal with random variables $X$ that are positive in the strong ESI
sense, that is, $0\stochleq_\eta X$ for some $\eta>0$. Notice that by
Proposition \ref{prop:esi_characterization}, we know that for each $a>0$, we can
bound the probability that $X$ is smaller than $-a$---a left-tail bound---by
$P(X\leq -a)\leq \rme^{-a}$. Additionally, we can obtain a Markov-style
inequality for the probability that $X$ is large---a right-tail bound.

\begin{proposition}\label{prop:markov}
  Let $X$ be a random variable such that $0\stochleq_\eta X$. Then, for any $a>0$,
  \begin{equation*}
    \prob(X\geq a)
    \leq
    \frac{\E[X]}{a} + \frac{p\log (1/p)}{\eta a}
    \leq
    \frac{\E[X]}{a} + \frac{1}{\rme\eta a},
  \end{equation*}
  where $p = \prob(X < 0)$
\end{proposition}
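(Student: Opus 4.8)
The plan is to combine Markov's inequality on the positive part $X_+$ with the left‑tail control supplied by the hypothesis $0 \stochleq_\eta X$, which unwinds to $\E[\rme^{-\eta X}] \leq 1$. Since $a > 0$, Markov gives $\prob(X \geq a) = \prob(X_+ \geq a) \leq \E[X_+]/a$, and because $X_+ = X + X_-$ we have $\E[X_+] = \E[X] + \E[X_-]$; if $\E[X_+] = \infty$ then $\E[X] = \infty$ and the asserted bound is vacuous, so we may assume all these quantities are finite. Hence the whole statement reduces to the single estimate $\E[X_-] \leq p\log(1/p)/\eta$ with $p = \prob(X<0)$, after which dividing by $a$ delivers the first displayed inequality.

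To establish that estimate I would first dispose of the trivial case $p=0$: then $X \geq 0$ almost surely, so $\E[X_-]=0$ and there is nothing to prove (reading $0\cdot\log(1/0)=0$). Assuming $p>0$, condition on $A := \{X<0\}$ and discard the nonnegative contribution of $A^c$ to the exponential moment, \[ p\,\E[\rme^{-\eta X}\mid A] \;=\; \E\big[\rme^{-\eta X}\,\ind{X<0}\big] \;\leq\; \E[\rme^{-\eta X}] \;\leq\; 1 ; \] here $\E[X\mid A]$ is well defined and finite since $\rme^{-\eta X}\geq \eta|X|$ on $A$. Then apply Jensen's inequality to the convex map $x\mapsto\rme^{-\eta x}$ under the conditional law given $A$: using $\E[X\mid A] = \E[X\ind{X<0}]/p = -\E[X_-]/p$, this gives $\E[\rme^{-\eta X}\mid A]\geq \rme^{\eta\E[X_-]/p}$. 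Combining the two bounds yields $p\,\rme^{\eta\E[X_-]/p}\leq 1$, i.e.\ $\eta\E[X_-]/p \leq \log(1/p)$, which is exactly the required estimate.

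The second, weaker inequality is then immediate: $p\mapsto p\log(1/p)$ is maximized over $[0,1]$ at $p=1/\rme$, with value $1/\rme$, so $p\log(1/p)/(\eta a) \leq 1/(\rme\,\eta a)$.

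I expect no serious obstacle; the one delicate point is the key estimate $\E[X_-]\leq p\log(1/p)/\eta$. A naive use of $\rme^{\eta t}\geq 1+\eta t$ only produces the cruder bound $\E[X_-]\leq (1-p)/\eta$, which is far too weak when $p$ is small. The sharper factor $p\log(1/p)$ emerges precisely because one instead uses Jensen in the ``reverse'' direction on the conditional law given $\{X<0\}$ while simultaneously throwing away the nonnegative mass of $\{X\geq 0\}$ inside $\E[\rme^{-\eta X}]$; getting this conditioning and the direction of Jensen right is the crux of the argument.
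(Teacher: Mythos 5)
Your proof is correct and is essentially the paper's own argument: both rest on Markov's inequality applied to $X_+$ together with conditional Jensen on $\{X<0\}$ combined with $p\,\E[\rme^{-\eta X}\mid X<0]\leq \E[\rme^{-\eta X}]\leq 1$, yielding the key estimate $\E[X_-]\leq p\log(1/p)/\eta$. The only difference is cosmetic: the paper decomposes $\E[X]$ and bounds both conditional pieces from below, whereas you bound $\prob(X\geq a)$ from above via $\E[X_+]=\E[X]+\E[X_-]$; the inequalities used are the same.
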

\begin{remark}{\rm  
  Notice that the first inequality reduces to Markov's inequality in the case
  that $p=P(X<0) = 0$, that is, when $X$ is a nonnegative random variable, the requirement for the standard Markov's inequality to hold. Thus, the intuition behind the proposition is that, since $0 \stochleq_{\eta} X$ expresses  $X$ is “highly likely almost positive”, it allows us to get something close to Markov after all.
  }
\end{remark}
Notice that for any increasing real-valued function $f$ it holds that
\begin{equation*}
\prob(X\geq a) = \prob(f(X) \geq f(a))
\end{equation*}
and consequently if $f(X)$ is positive in the ESI sense, that is,
$0\stochleq_\eta f(X)$ for some $\eta>0$, our version of Markov's inequality can
be used in the same spirit in which Chebyshev's inequality follows from Markov's
inequality.

\begin{corollary}
  If $f$ is increasing and $X$ is a random variable such that
  $0\stochleq_\eta f(X)$, then
  \begin{equation}
    \prob(X\geq a)
    \leq
    \frac{\E[f(X)]}{f(a)} + \frac{p\log (1/p)}{\eta f(a)}
    \leq
    \frac{\E[f(X)]}{f(a)} + \frac{1}{\rme\eta f(a)}
  \end{equation}
  where $p = \prob(f(X) < 0)$.
\end{corollary}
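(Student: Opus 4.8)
The plan is to obtain this corollary as an immediate consequence of Proposition~\ref{prop:markov}, applied to the transformed random variable $Y := f(X)$, in exactly the way Chebyshev's inequality is derived from Markov's (as anticipated in the discussion preceding the statement).

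First I would record the monotonicity step: since $f$ is increasing, $X \geq a$ implies $f(X) \geq f(a)$, i.e.~$\{X \geq a\} \subseteq \{f(X) \geq f(a)\}$, and hence $\prob(X \geq a) \leq \prob(Y \geq f(a))$. (If $f$ is strictly increasing the two events coincide, but only this one inclusion is needed for the bound.) Second, by hypothesis $0 \stochleq_\eta Y$, so $Y$ satisfies the premise of Proposition~\ref{prop:markov}. Applying that proposition to $Y$ with the threshold $f(a)$ in place of $a$ yields
\[
\prob(Y \geq f(a)) \leq \frac{\E[Y]}{f(a)} + \frac{p \log(1/p)}{\eta f(a)} \leq \frac{\E[Y]}{f(a)} + \frac{1}{\rme\, \eta f(a)},
\]
with $p = \prob(Y < 0) = \prob(f(X) < 0)$. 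Since $\E[Y] = \E[f(X)]$, chaining this with the monotonicity step gives the two claimed inequalities.

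There is essentially no obstacle here; all the content sits in Proposition~\ref{prop:markov}. The only points I would flag are the harmless requirement $f(a) > 0$, which is needed so that the threshold handed to Proposition~\ref{prop:markov} is positive and which I would state explicitly, and the observation that $f$ need only be increasing (not strictly) for the event inclusion, and therefore the probability inequality, to hold. The second inequality in the display is inherited verbatim from Proposition~\ref{prop:markov}, where it follows from the elementary bound $x \log(1/x) \leq 1/\rme$ on $[0,1]$.
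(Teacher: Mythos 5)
Your proposal is correct and matches the paper's (implicit) argument exactly: the corollary is obtained by applying Proposition~\ref{prop:markov} to $f(X)$ with threshold $f(a)$, using monotonicity of $f$ to pass from $\prob(X\geq a)$ to $\prob(f(X)\geq f(a))$. Your remarks that only the inclusion $\{X\geq a\}\subseteq\{f(X)\geq f(a)\}$ is needed (the paper states an equality, which would require strict monotonicity) and that $f(a)>0$ should be assumed are both sensible refinements.
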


\section{When does a family of RVs satisfy an ESI?}
\label{sec:whenesi}

In this section, we show a converse to the definition of the ESI. A special role will be payed by regular, subgamma, subcentered random variabes. As we will see, subgamma makes reference to random variables whose (right tail) is lighter than that of a gamma distribution. Recall from Section~\ref{sec:basic} that we call a family of random variables regular if its second moment is uniformly bounded; subcentered, if their expectation is negative.

\subsection{General ESIs and subcentered subgamma random variables}
\label{sec:weakesi}
We say that a random variable $X$ has a {\em  $(c,v)$-subgamma right tail \/} if it satisfies
\begin{equation}\label{eq:sub_gamma_first_description}
  X - \E[X]\stochleq_\eta \frac{1}{2}\frac{v\eta}{1-c\eta}
\end{equation}
for some $c,v>0$ and  all $\eta$ with $0\leq c\eta\leq 1$  
\citep[see][Section 2.4]{boucheron_concentration_2013}. This name is in relation to
the fact that random variables that are gamma distributed satisfy it.
Subgamma random variables are well-studied: \Citet{van_de_geer_bernsteinorlicz_2013} studied empirical
processes of random variables that satisfy a tail condition implied by
(\ref{eq:sub_gamma_first_description}). Sufficient conditions for (possibly
unbounded) random variables to satisfy a subgamma bound have been known for a
long time \citep[cf.][p. 202-204]{uspensky_introduction_1937}. This topic has been also treated by \Citet[Section 2.2.2]{van_der_vaart_weak_1996} and by \citet[Section 2.8]{boucheron_concentration_2013}.

The following proposition shows that for a regular family, that is, a family satisfying  $\sup_{f \in \cF} \E[X^2_f] < \infty$, ESI families---families that satisfy $X_f\stochleq_u 0$ for all $f$ and some $u$---can be equivalently characterized in a number of ways. Its most important implications are that a regular family of random variables satisfies an ESI, i.e.~for all $f \in \cF$,  $X_f \stochleq_u 0$,  
\begin{enumerate}
    \item[(a)] if and only if its elements are all subcentered and  uniformly subgamma on the right, and
    \item[(b)] if and only if it satisfies an ESI for a function $h$ that is linear near $0$.
\end{enumerate}
We also note that the first converse that we presented  to the main ESI implications, Proposition~\ref{prop:exponential_tail_esi}, was still relatively weak, in the sense that if we have an ESI of the form $Z \stochleq_{u} 0$, we apply the central Proposition~\ref{prop:esi_characterization} to calculate that for all $\epsilon> 0$,  (a) $P(Z \geq K+\epsilon) \leq e^{-u(\epsilon)K}$ and (b) $\E[Z] \leq 0$, and we “back-transform”  (a) to an ESI via the converse in Proposition~\ref{prop:exponential_tail_esi} (which only uses (a)), we obtain $Z \stochleq_{u'} c$ for some ESI function $u'$ and some $c > 0$, i.e.~we loose a additive constant term.  With the help of the proposition below, we can use (a) jointly with (b)  to conclude (using 6. below) that $Z \stochleq_{u'} 0$ for an ESI function $u'$, i.e.~we can “back-transform” without loosing any additive terms in the ESI. 
\begin{proposition}\label{prop:newversionsubgammamain}
Let $\{X_f \}_{f \in \cF}$ be a regular family, i.e.~$\sup_{f \in \cF} \E[X^2_f] < \infty$. Then, the following statements are equivalent:
\begin{enumerate}
\item There is an ESI function $u$ such that for all $f \in \cF$, $X_f \stochleq_{u} 0$.
\item There is a constant $C^*> 0$ and a constant $\eta^* >0$ such that, uniformly over all $f \in \cF$, 
$X_f \leq  X_f - \E[X_f] \stochleq_{\eta^*} C^*$.
\item There exist $c,v > 0$ such that, for all $f\in\calF$, the $X_f$ are subcentered and  have a $(c,v)$-subgamma right tail.
\item There is an ESI function $h$ such that, for all $f \in \cF$, we have $X_f \leq X_f - \E[X_f]  \stochleq_{h} 0$ where $h$ is of the form 
$h(\epsilon) = C \epsilon \wedge \eta^*$. 
\item There exists $c,v > 0$ such that, for all $f\in\calF$, the $X_f$ are subcentered and, for each $f \in \cF$ and $0 < \delta \leq $1, with probability at least $1- \delta$,
\begin{equation}\label{eq:boucheronsubgamma}
X_f \leq \sqrt{2 v \log (1/\delta)} + c \log (1/\delta).  
\end{equation}
\item There exists $a > 0$ and  a differentiable function $h: \reals^+_0 \rightarrow \reals^+_0$
with $h(\epsilon) > 0$, $h'(\epsilon) \geq 0$ for $\epsilon > 0$, such that for all $f \in \cF$,  the $X_f$ are subcentered and $P(X \geq \epsilon) \leq a \exp(- h(\epsilon))$ (in particular $h$ may be a positive constant or a linear function of $\epsilon$). 
\end{enumerate}
\end{proposition}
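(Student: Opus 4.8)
The plan is to establish all six equivalences through a short cycle of implications, with a single analytic lemma carrying the weight. The lemma I would isolate first is a \emph{uniform near-origin moment-generating-function estimate}: if the regular family $\{X_f\}_{f\in\cF}$ satisfies, for some fixed $\eta_0,c_0>0$ common to all $f$, the sub-exponential right-tail bound $\E[\rme^{\eta_0(X_f-\E[X_f])}]\le \rme^{\eta_0 c_0}$, then there are constants $W>0$ and $\eta^\ast\in(0,\eta_0]$ depending only on $\eta_0,c_0$ and $\sup_{f}\E[X_f^2]$ such that $\E[\rme^{\lambda(X_f-\E[X_f])}]\le \rme^{\lambda^2 W/2}$ for all $0<\lambda\le\eta^\ast$ and all $f$. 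I would prove this from the elementary inequality $\rme^{y}-1-y\le \tfrac12 y^2\rme^{y_+}$ with $y=\lambda(X_f-\E[X_f])$: taking expectations kills the linear term (the variable is centred), and the quadratic remainder is controlled by splitting on $\{X_f\le\E[X_f]\}$ (there $\rme^{y_+}=1$ and $\E[X_f^2]$ suffices) and on its complement (there, for $\lambda\le\eta_0/2$, $z^2\rme^{\lambda z}$ is bounded by a fixed multiple of $\rme^{\eta_0 z}$, so the sub-exponential tail applies); Cauchy--Schwarz gives $|\E[X_f]|\le(\sup_f\E[X_f^2])^{1/2}$, keeping every constant $f$-independent.

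Given this lemma, the forward implications are short. For $(4\Rightarrow 1)$: a function $\epsilon\mapsto C\epsilon\wedge\eta^\ast$ is an ESI function and $X_f\le X_f-\E[X_f]$ almost surely (which is exactly subcenteredness), so $X_f-\E[X_f]\stochleq_h 0$ implies $X_f\stochleq_h 0$ by monotonicity of $x\mapsto\rme^{\lambda x}$. For $(1\Rightarrow\text{lemma hypothesis})$: evaluating $X_f\stochleq_u 0$ at a single $\epsilon_0>0$ gives $\E[\rme^{u(\epsilon_0)X_f}]\le \rme^{u(\epsilon_0)\epsilon_0}$ with $u(\epsilon_0)>0$ fixed, a uniform sub-exponential right tail, while Proposition~\ref{prop:esi_characterization} gives $\E[X_f]\le 0$; and $X_f-\E[X_f]\stochleq_{u(\epsilon_0)} c_0$ follows with $c_0=\epsilon_0+(\sup_f\E[X_f^2])^{1/2}$. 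From the lemma's conclusion $\E[\rme^{\lambda(X_f-\E[X_f])}]\le \rme^{\lambda^2 W/2}$ on $(0,\eta^\ast]$ I read off statements 2, 3 and 4 at once: take $\lambda=\eta^\ast$ and $C^\ast=\eta^\ast W/2$ for \textbf{2}; note $\tfrac12\lambda^2 W\le \lambda\epsilon$ exactly when $\lambda\le 2\epsilon/W$, so $h(\epsilon)=(2/W)\epsilon\wedge\eta^\ast$ works for \textbf{4}; and since $1-c\lambda\le 1$, the bound $\tfrac12\lambda^2 W\le \tfrac12 v\lambda^2/(1-c\lambda)$ holds on $0\le c\lambda<1$ with $v=W$, $c=1/\eta^\ast$, giving the $(c,v)$-subgamma right tail \eqref{eq:sub_gamma_first_description} for \textbf{3}. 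Statement \textbf{2} re-supplies the lemma's hypothesis verbatim, so 2 also yields 3 and 4.

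For the tail-probability forms: $(3\Rightarrow 5)$ is the classical Cram\'er--Chernoff optimisation of a subgamma bound --- minimising $\lambda\mapsto \tfrac12 v\lambda^2/(1-c\lambda)-\lambda t$ gives $\prob(X_f-\E[X_f]\ge\sqrt{2vt}+ct)\le \rme^{-t}$ (see \cite{boucheron_concentration_2013}, \S2.4), and then $X_f\le X_f-\E[X_f]$ together with $t=\log(1/\delta)$ produces \eqref{eq:boucheronsubgamma}. For $(5\Rightarrow 6)$: $t\mapsto\sqrt{2vt}+ct$ is a continuous strictly increasing bijection of $\reals^+_0$ whose inverse $h$ is differentiable, nonnegative, nondecreasing, strictly positive on $(0,\infty)$, quadratic near $0$ and linear for large argument (exhibiting the ``in particular'' forms), and $\prob(X_f\ge\epsilon)\le \rme^{-h(\epsilon)}$, i.e.\ statement 6 with $a=1$. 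For $(6\Rightarrow 1)$: Proposition~\ref{prop:exponential_tail_esi} applied to the exponential right-tail bound that statement 6 provides (directly when $h$ is linear, a fortiori when $h$ grows faster) yields $X_f\stochleq_{\eta'}c$ with $\eta'$ and $c$ common to the family; this is again the lemma's hypothesis, so $\E[\rme^{\lambda(X_f-\E[X_f])}]\le \rme^{\lambda^2 W/2}$ on $(0,\eta^\ast]$, and multiplying by $\rme^{\lambda\E[X_f]}\le 1$ gives $\E[\rme^{\lambda X_f}]\le \rme^{\lambda^2 W/2}$, hence $X_f\stochleq_u 0$ with $u(\epsilon)=(2/W)\epsilon\wedge\eta^\ast$ --- and, crucially, with \emph{no} additive constant, which is the point flagged in the remark preceding the proposition. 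This closes the cycle $1\Rightarrow\{2,3,4\}$, $3\Rightarrow5\Rightarrow6\Rightarrow1$, $2\Rightarrow$(lemma)$\Rightarrow$ all, $4\Rightarrow1$.

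I expect the main obstacle to be the uniform near-origin lemma: one is handed only one-sided (right-tail) exponential control yet must manufacture the full two-sided quadratic behaviour $\log\E[\rme^{\lambda(X_f-\E[X_f])}]=O(\lambda^2)$ near the origin, uniformly in $f$, using nothing but a uniform second moment to tame the left tail --- and the bookkeeping that makes the constants genuinely $f$-independent (the Cauchy--Schwarz bound on $|\E[X_f]|$ and the pointwise domination of $z^2\rme^{\lambda z}$ by a multiple of $\rme^{\eta_0 z}$) is where the care goes. The secondary subtlety is $6\Rightarrow 1$: Proposition~\ref{prop:exponential_tail_esi} on its own only returns an ESI with a strictly positive additive constant, and it is regularity together with $\E[X_f]\le 0$ --- not the tail bound by itself --- that lets us absorb that constant and recover statement 1 exactly.
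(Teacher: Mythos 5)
Your proof is correct and follows the same global architecture as the paper's (a cycle through the six statements, closed via $4\Rightarrow 1$ and $3\Rightarrow 5\Rightarrow 6\Rightarrow$ back to the start), but the technical heart is handled by a genuinely different argument. The paper isolates the crux as Theorem~\ref{thm:muriel} (the implication $2\Rightarrow 3$) and proves it in the style of \citet[Theorem 2.10]{boucheron_concentration_2013}: it bounds \emph{all} moments of the positive part, $\E[(U)_+^n]\le n!\,M/\eta^{*n}$, via the tail integral, then sums the resulting geometric series inside $\rme^x\le 1+x+\tfrac12 x^2+\sum_{n\ge3}x_+^n/n!$ to land directly on the subgamma form $1+\tfrac12 v\eta^2/(1-c\eta)$ with explicit $(c,v)=(1/\eta^*,\Var[U]+2\rme^{\eta^*C})$. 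You instead use the single second-order inequality $\rme^y\le 1+y+\tfrac12 y^2\rme^{y_+}$ together with the pointwise domination $z^2\rme^{\lambda z}\le \mathrm{const}\cdot\rme^{\eta_0 z}$ for $\lambda\le\eta_0/2$, obtaining the purely quadratic near-origin bound $\E[\rme^{\lambda(X_f-\E[X_f])}]\le\rme^{\lambda^2 W/2}$ and then reading off statements 2, 3 and 4 from that one estimate (whereas the paper chains $1\Rightarrow2\Rightarrow3\Rightarrow4$ sequentially). Your route is more elementary --- no moment bookkeeping, no series summation --- and makes the role of regularity more transparent; the paper's buys explicit tracking of $(c,v)$ through every statement, which it advertises in the remarks following the proposition (e.g.\ that $(3)$ with $(c,v)$ gives $(4)$ with $\eta^*=1/(2c)$, $C=1/(2v)$). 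Your treatment of $6\Rightarrow 1$ --- absorbing the additive constant from Proposition~\ref{prop:exponential_tail_esi} by recentering, invoking the lemma, and multiplying back by $\rme^{\lambda\E[X_f]}\le 1$ --- is exactly the ``back-transform without losing additive terms'' point the paper makes, realized there as $6\Rightarrow 2\Rightarrow 3\Rightarrow 4\Rightarrow 1$.

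One caveat you share with the paper: in $6\Rightarrow 1$ you write that Proposition~\ref{prop:exponential_tail_esi} applies ``directly when $h$ is linear, a fortiori when $h$ grows faster'', but statement 6 as literally phrased also permits $h$ to be a positive \emph{constant}, in which case $P(X_f\ge\epsilon)\le a\rme^{-h(\epsilon)}$ gives no decay and that proposition does not apply. The paper's own Part~7 is equally silent on this; the intended reading is that $h$ grows at least linearly for large $\epsilon$ (as the $h$ produced in $5\Rightarrow 6$ does), and under that reading both proofs go through.
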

In Appendix~\ref{app:weakesi}, we state and prove an extended version of this result, Proposition~\ref{prop:newversionsubgammaapp}, which shows that if (3) holds for some pair $(c,v)$, then (5) holds for the same $(c,v)$ and (4) holds for $\eta^* = 1/(2c)$ and $C = 1/(2v)$.
It also shows that regularity is only required for some of the implications between the four statements above. In particular, it is not needed for $(3) \Rightarrow (4)$, $(3) \Rightarrow (5)$ and $(4) \Rightarrow (1)$, and for $(1) \Rightarrow (2)$; a strictly weaker condition---control of the first rather then second moment of the $X_f$---is sufficient. 
However, 
Example~\ref{ex:varianceneeded} below shows that,
in general, some sort of minimal control of the supremum of the second moment, and hence of the left tails of the $X_f$, is needed (note though that higher moments of $|X_f|$ need not exist) to get $(2) \Rightarrow (3)$ and hence the full range of equivalences. 
Indeed, the only difficult part in the proposition above is the implication $(2) \Rightarrow (3)$. It is a direct consequence of  Theorem~\ref{thm:muriel} below (again proved in Appendix~\ref{app:weakesi}), which shows that
we can actually directly relate the constants $(c,v)$ in “right subgamma” to the constants $C^*$ and $\eta^*$. The proof extends an argument from \cite[Theorem 2.10]{boucheron_concentration_2013}.
\begin{theorem}\label{thm:muriel}\label{prop:centered_bounded_esi}
Let $U$ be a random variable such that $U - \E[U] \stochleq_{\eta^*} C$ for  some fixed
     constants $C$ and $\eta^*>0$. Then for $0 < \eta \leq \eta^*$, we have $U- \E[U]\stochleq_\eta \frac{1}{2}\frac{v\eta}{1-c\eta}$ for $v= \Var[U]+ 2\exp(\eta^* C)$ and $c = 1/\eta^*$.
\end{theorem}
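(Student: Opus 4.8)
The plan is to extend the moment-based argument used for sub-gamma variables in \cite[Theorem 2.10]{boucheron_concentration_2013}. First I would reduce to the centred case by setting $W := U - \E[U]$; the hypothesis then reads $\E[\rme^{\eta^* W}] \le M$ with $M := \exp(\eta^* C)$, one has $\Var[U] = \E[W^2] =: \sigma^2$, and, with $c := 1/\eta^*$, the target becomes $\E[\rme^{\eta W}] \le \exp\!\big(\tfrac{v\eta^2}{2(1-c\eta)}\big)$ for $0<\eta<\eta^*$ (the boundary $\eta=\eta^*$ is vacuous: the right-hand side of the target ESI is then $+\infty$). Since $1+t\le \rme^t$, it suffices to show $\E[\rme^{\eta W}] \le 1 + \tfrac{v\eta^2}{2(1-c\eta)}$.

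The key input is a Bernstein-type moment bound on the right tail. For every integer $q\ge 2$ and every $x\ge 0$, $x^q \le \tfrac{q!}{(\eta^*)^q}\rme^{\eta^* x}$ (a single term of the exponential series), so $(W_+)^q \le \tfrac{q!}{(\eta^*)^q}\rme^{\eta^* W}$ pointwise (trivially on $\{W<0\}$, where the left side is $0$). Taking expectations,
\[
\E[(W_+)^q] \;\le\; \frac{q!\,M}{(\eta^*)^q} \;=\; \frac{q!}{2}\,v'\,c^{\,q-2},\qquad v' := \frac{2M}{(\eta^*)^2}.
\]

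Next I would split $\E[\rme^{\eta W}] = \E[\rme^{\eta W}\ind{W<0}] + \E[\rme^{\eta W}\ind{W\ge0}]$. On $\{W<0\}$ the elementary bound $\rme^x\le 1+x+\tfrac{x^2}{2}$ (valid for $x\le 0$) gives $\E[\rme^{\eta W}\ind{W<0}] \le \prob(W<0) + \eta\,\E[W\ind{W<0}] + \tfrac{\eta^2}{2}\E[W^2\ind{W<0}]$. On $\{W\ge0\}$, expanding $\rme^{\eta W}$ and using the term-wise identity $\tfrac{(\eta W)^q}{q!}\ind{W\ge0} = \tfrac{\eta^q}{q!}(W_+)^q$ gives $\E[\rme^{\eta W}\ind{W\ge0}] = \prob(W\ge0) + \eta\,\E[W\ind{W\ge0}] + \sum_{q\ge2}\tfrac{\eta^q}{q!}\E[(W_+)^q]$. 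Adding the two pieces, using $\E[W]=0$ and recombining the quadratic contributions via $\E[W^2\ind{W<0}] + \E[(W_+)^2] = \E[W^2] = \sigma^2$, yields $\E[\rme^{\eta W}] \le 1 + \tfrac{\eta^2\sigma^2}{2} + \sum_{q\ge3}\tfrac{\eta^q}{q!}\E[(W_+)^q]$. Inserting the moment bounds and summing the geometric series (legitimate since $c\eta<1$), $\sum_{q\ge3}\tfrac{\eta^q}{q!}\E[(W_+)^q] \le \tfrac{v'}{2}\sum_{q\ge3}\eta^q c^{\,q-2} = \tfrac{v'\eta^2}{2}\cdot\tfrac{c\eta}{1-c\eta}$; hence, using $1-c\eta\le1$ and $c\eta\le1$,
\[
\E[\rme^{\eta W}] \;\le\; 1 + \frac{\eta^2}{2}\Big(\sigma^2 + v'\,\tfrac{c\eta}{1-c\eta}\Big) \;\le\; 1 + \frac{(\sigma^2+v')\,\eta^2}{2(1-c\eta)},
\]
which is the claim with $v = \sigma^2 + v' = \Var[U] + 2(\eta^*)^{-2}\exp(\eta^* C)$ (equivalently $\Var[U]+2c^2\exp(\eta^* C)$). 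For $\eta^*\ge 1$ this $v$ is at most $\Var[U] + 2\exp(\eta^* C)$, and since enlarging the right-hand side of an ESI preserves it (Proposition~\ref{prop:useful-properties-esi}, Part~\ref{item:esi-properties-leq}), the stated bound follows; any valid $v$ larger than the one produced here only weakens the conclusion.

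The main obstacle is the combination step, together with the structural reason forcing it: one cannot naively Taylor-expand $\E[\rme^{\eta W}] = 1+\sum_{q\ge2}\tfrac{\eta^q}{q!}\E[W^q]$, because the hypothesis controls only the \emph{right} tail exponentially, beyond the second moment, so the left-tail even moments $\E[W^q]$, $q\ge4$, may be infinite. This is why one must split into $\{W<0\}$ and $\{W\ge0\}$, handling the left side with the quadratic bound and the right side with the exponential/moment control, and then verify carefully that the odd-moment ``wrong-sign'' terms and the left-tail second moment collapse exactly into $\sigma^2$, leaving a clean sub-gamma (geometric) remainder, while tracking constants so that they match the claimed $v$ and $c$. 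A technically heavier alternative avoids moment expansions entirely: split $\{W\ge0\}$ at a level $t=t(\eta)$, bound $\rme^{\eta W}\le \rme^{\eta^* W}\rme^{-(\eta^*-\eta)t}$ on $\{W>t\}$, and optimise over $t$; this is robust but does not obviously sharpen the constants.
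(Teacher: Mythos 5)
Your proof is correct and follows essentially the same route as the paper's: bound the right-tail moments $\E[(W_+)^q]\le q!\,\rme^{\eta^* C}/(\eta^*)^q$ (you do this via a single term of the exponential series, the paper via tail integration --- equivalent), use $\rme^x\le 1+x+\tfrac{1}{2}x^2+\sum_{q\ge3}x_+^q/q!$ so that only $\Var[U]$ and the right-tail moments enter, and sum the geometric series. The constant you obtain, $v=\Var[U]+2\exp(\eta^* C)/\eta^{*2}$, is in fact exactly what the paper's own proof produces (it sets $v=\Var(U)+2M/\eta^{*2}$ with $M=\rme^{\eta^* C}$), so the mismatch with the $v$ displayed in the theorem statement that you carefully flag is a discrepancy internal to the paper rather than a gap in your argument.
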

\begin{example}\label{ex:varianceneeded}
{\rm Let $U$ be a random variable with, for $U \leq -1$, density $p(u) = 1/|u|^{\nu}$ for some $\nu$ with $5/2 < \nu <3$.
Then $P(U \leq -1) = \int_{- \infty}^{-1} p(u) = 1/(\nu -1)$. We set $x_{\nu} = (\nu-1)/(\nu-2)^2$ and $P(U= x_{\nu}) = 1- P(U \leq -1)$ so that $P(- 1 < U < x_{\nu})=  P(U > x_{\nu}) = 0$. Then $\E[U \cdot \ind{U \leq -1}]= - 1/(\nu-2)$ and hence $\E[U] = 0$, and an easy calculation shows that $U= U- \E[U] \stochleq_1 C^*$ with $C^* = \log (1 + \exp(x_\nu))$. Hence the premise inside (3) of Proposition~\ref{prop:newversionsubgammamain} is satisfied for family $\{ U\}$, but  $\Var[U] = \infty$ so that $\{U \}$ is not regular so that the general precondition of  Proposition~\ref{prop:newversionsubgammamain}
does not hold. And indeed (proof in Appendix~\ref{app:weakesi}) we find that $(\E[\exp(\eta U)] -1 )/\eta^2 \rightarrow \infty$ as $\eta \downarrow 0$, showing that the right-subgamma property is not satisfied. }
\end{example}

\subsection{Interpolating between weak and strong ESIs}

We may think of a weak and a strong ESI as two extremes in a hierarchy of possible tail bounds---the strong ESI given the lightest tails; the weak, the heaviest. We now define ESI families and $\gamma$-strong ESI family, where $\gamma\in [0,1]$ is the interpolating factor.  

\label{sec:strongesi}
\commentout{
\newcommand{\sql}{\ensuremath{\text{\sc sql}}}
Let the function $\sql$ be given by 
$$
\sql(x) = \begin{cases} x^2 & \text{\ if $|x| \leq 1$} \\ |x| & \text{\ if $|x| > 1$}.
\end{cases}
$$
$\sql$ is {\em s\/}ymmetric, {\em q\/}uadratic near the origin and linear for large $|x|$. Any other continuous function with these properties --- such as e.g.~$\log \cosh$ --- could be used as well in the definition and theorem below. 
}
\begin{definition}
We say that a family of random variables $\{X_f: f \in \cF \}$ is an {\em  ESI family} if there exists an ESI function $u$ such that for all $f \in \cF$, $X_f \stochleq_{u} 0$.
For $0 \leq \gamma \leq 1$, we say that the family is a $\gamma$-strong ESI family if there exist $C^* > 0, \eta^* > 0$ and a function $u(\epsilon) = C^* \epsilon^{\gamma} \wedge \eta^*$ such that for all $f \in \cF$, $X_f \stochleq_u 0$.  
For an interval $I \subseteq [0,1]$, we say that the family is an $I$-strong ESI family if for all $\gamma \in I$, it is a $\gamma$-strong ESI family. 
\end{definition}
Note that if for some $\eta > 0$, all $X_f$ satisfy the strong ESI $X_f \stochleq_\eta 0$, then in this terminology they form a $0$-strong ESI family. 
\begin{proposition}\label{prop:etaepsilon}
    Fix $\gamma \in [0,1]$. A regular family $\{X_f: f \in \cF \}$  is a $\gamma$-strong ESI family if and only if there exists $C^{\circ} > 0, 0 < \eta^{\circ} < 1$  such that for all $f \in \cF$, 
    \begin{equation}\label{eq:etaepsilon}
\text{for all $0 < \eta \leq \eta^{\circ}$:\ }    X_f \stochleq_{\eta} C^{\circ} \eta^{\frac{1}{\gamma}} 
        \end{equation}
\end{proposition}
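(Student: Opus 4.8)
The plan is to prove the equivalence by directly unfolding the definition of a general ESI and reparametrizing between the additive slack $\epsilon$ and the scale factor $\eta$. Recall that, for $u(\epsilon) = C^* \epsilon^{\gamma} \wedge \eta^*$, the statement $X_f \stochleq_u 0$ means precisely: for every $\epsilon > 0$, setting $\eta = u(\epsilon)$, we have $\E[\rme^{\eta X_f}] \leq \rme^{\eta \epsilon}$, i.e.\ $X_f \stochleq_\eta \epsilon$. The only content of the proposition is that, on the region where $u$ has not yet saturated at $\eta^*$, the map $\epsilon \mapsto u(\epsilon) = C^* \epsilon^\gamma$ is a bijection onto $(0,\eta^*]$ with inverse $\eta \mapsto (\eta/C^*)^{1/\gamma}$, while the saturated region contributes nothing new.

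For the direction ``$\gamma$-strong $\Rightarrow$ \eqref{eq:etaepsilon}'', assume $X_f \stochleq_u 0$ for all $f$ with $u$ as above, and put $\epsilon_0 = (\eta^*/C^*)^{1/\gamma}$, so that $u(\epsilon) = C^* \epsilon^\gamma$ for $0 < \epsilon \leq \epsilon_0$. As $\epsilon$ ranges over $(0,\epsilon_0]$, the quantity $\eta := C^* \epsilon^\gamma$ ranges over $(0,\eta^*]$ and $\epsilon = (\eta/C^*)^{1/\gamma}$; hence, for every $f$ and every $\eta \in (0,\eta^*]$, $X_f \stochleq_\eta (\eta/C^*)^{1/\gamma} = (C^*)^{-1/\gamma}\, \eta^{1/\gamma}$. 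Taking $C^\circ = (C^*)^{-1/\gamma} > 0$ and $\eta^\circ = \min\{\eta^*, 1/2\} \in (0,1)$ then yields \eqref{eq:etaepsilon} (restricting the range of $\eta$ from $(0,\eta^*]$ to the subinterval $(0,\eta^\circ]$ is harmless).

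For the converse, given $C^\circ > 0$ and $\eta^\circ \in (0,1)$ satisfying \eqref{eq:etaepsilon}, set $C^* = (C^\circ)^{-\gamma}$, $\eta^* = \eta^\circ$, and $u(\epsilon) = C^* \epsilon^\gamma \wedge \eta^*$, which is a valid ESI function (continuous, nondecreasing, strictly positive on $\reals^+$). Fix $\epsilon > 0$ and $f$, and let $\eta = u(\epsilon)$. If $\eta = C^* \epsilon^\gamma$ (the unsaturated case, $\epsilon \leq \epsilon_0$), then $\eta \leq \eta^* = \eta^\circ$, so \eqref{eq:etaepsilon} applies and gives $X_f \stochleq_\eta C^\circ \eta^{1/\gamma}$; since $C^* = (C^\circ)^{-\gamma}$, a one-line computation gives $C^\circ \eta^{1/\gamma} = C^\circ (C^* \epsilon^\gamma)^{1/\gamma} = C^\circ (C^*)^{1/\gamma}\epsilon = \epsilon$, so $X_f \stochleq_\eta \epsilon$. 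If instead $\eta = \eta^*$ (the saturated case, $\epsilon > \epsilon_0$), apply the previous case at the boundary point $\epsilon_0$, where $u(\epsilon_0) = \eta^*$, to obtain $\E[\rme^{\eta^* X_f}] \leq \rme^{\eta^* \epsilon_0} \leq \rme^{\eta^* \epsilon}$. Either way $\E[\rme^{\eta(X_f - \epsilon)}] \leq 1$, so $X_f \stochleq_u 0$ for all $f$, i.e.\ the family is $\gamma$-strong.

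I do not expect a serious analytic obstacle: the work is purely in bookkeeping the two regimes of $u$ and in getting the exponents in the substitution $\eta = C^*\epsilon^\gamma \leftrightarrow \epsilon = (\eta/C^*)^{1/\gamma}$ right. The one point that genuinely needs care is the degenerate case $\gamma = 0$: there $1/\gamma$ must be read as $+\infty$, so $\eta^{1/\gamma} = 0$ for $0 < \eta < 1$ --- which is exactly why the statement insists on $\eta^\circ < 1$ --- and \eqref{eq:etaepsilon} collapses to ``$X_f \stochleq_\eta 0$ for all $\eta \leq \eta^\circ$'', equivalent to the $0$-strong condition $X_f \stochleq_{\eta_0} 0$ with $\eta_0 = \min\{C^*,\eta^*\}$ via Proposition~\ref{prop:useful-properties-esi}, Part~\ref{item:esi-properties-convexity} in one direction and trivially in the other. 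Finally, note that the argument nowhere uses regularity of $\{X_f\}$; the hypothesis is retained only for uniformity with the surrounding statements.
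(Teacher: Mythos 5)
Your proof is correct and follows essentially the same route as the paper's: the substitution $\eta = C^*\epsilon^\gamma$ turning the slack $\epsilon$ into $(C^*)^{-1/\gamma}\eta^{1/\gamma}$ and back. You are in fact somewhat more careful than the paper, which glosses over the saturated regime $\epsilon>\epsilon_0$ and the $\gamma=0$ case, and whose stated constant $C^\circ = 1/C^{*\gamma}$ appears to be a typo for your (correct) $(C^*)^{-1/\gamma}$.
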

where we set $\eta^{1/0} := \lim_{\gamma \downarrow 0} \eta^{1/\gamma} = 0$.
\begin{proof}
Let $u(\epsilon) = C^* \epsilon^\gamma \wedge \eta^*$ as in the definition of $\gamma$-strong. 
Set $\epsilon^* > 0$ to be such that $C^* e^{*\gamma} = \eta^*$, i.e.~the value of $\epsilon$ at which $u(\epsilon)$ starts to become a horizontal line. 
By definition, we have 
$$
(\ref{eq:etaepsilon})  \Leftrightarrow \text{$\forall  \eta \in (0, \eta^{\circ}]$:\ } \E[\rme^{\eta X_f}] \leq e^{\eta   \cdot C^{\circ} \eta^{1/\gamma}} \text{\ and\ } X_f \stochleq_u 0 \Leftrightarrow \text{$\forall \epsilon \in (0,\epsilon^*]$:\ } \E[\rme^{C^* \epsilon^{\gamma} X_f}] \leq e^{C^* \epsilon^{\gamma} \cdot \epsilon}
$$
If we set $C^{\circ} = 1/ C^{*\gamma}$ and for each $\epsilon \in (0,\epsilon^*]$, we set  $\eta = C^* \epsilon^{\gamma}$ then both  expressions coincide for each such $\epsilon$ and for each $\eta \in (0,\eta^*]$; the result follows. 
\end{proof}
The importance and motivation of $\gamma$-strong ESI families
comes from their application in fast-rate results as already indicated in the introduction. As there, let $\{L_f: f \in \cF \}$ be a collection of excess-loss random variables, $L_f$ being the excess loss of predictor $f$, and let $X_f = -L_f$ be the negative excess loss. Then $\{X_f: f \in \cF \}$ being a $\gamma$-strong ESI family coincides with, under the definitions of \Citet{erven2015fast}, $\cF$ satisfying the {\em $u$-central fast rate condition\/} for $u(\epsilon) = C^* \epsilon^\gamma \wedge \eta^*$. 
They showed that, for bounded loss functions (implying that the $L_f$ are uniformly bounded), under the $u$-central fast-rate condition with $u$ as above, and with a suitable notion of complexity $\textsc{comp}$, one can get an excess risk rate of order $O((\textsc{comp}/n)^{1/(1+\gamma)}$, as was illustrated for the special case of ERM with finite $\cF$ in the introduction. 
\cite{GrunwaldM20} (GM from now on) extended their result to the case that the $L_f$ are unbounded, and only have minimal tail control on the right tail, the tail satisfying a condition they called the {\em witness-of-badness\/} or just {\em witness\/} condition. They showed that both this condition and a $u$-central fast-rate condition hold in many practically interesting learning situations. We state the  witness-of-badness condition here in terms  of $X_f = -L_f$ rather than $L_f$, since it can then also be used for collections $\{X_f \}_{f \in \cF}$ that simply satisfy an ESI and have no excess-loss interpretation. 
\begin{definition}{\bf \ [Witness-of-Badness Condition]}
\label{def:witness-badness}
There  exists $0 < c < 1$ and 
$C > 0$ such that for all $f \in \cF$, 
\begin{equation}\label{eq:witness}
\E[(- X_f) \ind{- X_f \geq C}] \leq c {\bf E}[- X_f].
\end{equation}
\end{definition}
Note that this condition only makes sense for random variables with $\E[X_f] \leq 0$ (which automatically holds if $X_f \stochleq_u 0$). It then automatically holds whenever the $X_f$ have uniformly bounded left tail; GM show that it holds in many other cases as well, with the caveat that the constant $C$ often scales linearly in the (suitably defined) dimension, making the resulting bounds not always optimal in terms of this dimension.  

GM's Lemma 21, translated into ESI notation, now says the following:
\begin{lemma}{\bf [GM's Lemma 21, rephrased as ESI]}
\label{lem:witness}
    Suppose that $\{ X_f: f \in \cF \}$ is an ESI family, i.e.~$X_f \stochleq_u 0$,  such that  $\sup_{\epsilon > 0} u(\epsilon) < \infty$ (in particular, any ESI family can be expressed as such if it is regular) and suppose that the witness-of-badness condition as above holds. Then, there is a $c^* > 0$ such that, for all $f \in \cF$, 
    \begin{equation}\label{eq:thereturnofthewitness}
    X_f - c^* \E[X_f] \stochleq_{u/2} 0. 
    \end{equation}
\end{lemma}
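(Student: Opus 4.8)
By the definition of the ESI, the claim is equivalent to: there exists $c^*>0$ such that for every $f\in\cF$ and every $\epsilon>0$, writing $\eta:=u(\epsilon)/2$ and $m_f:=-\E[X_f]\ge0$ (the sign by Proposition~\ref{prop:esi_characterization}),
\[
\E\!\left[\rme^{\eta X_f}\right]\ \le\ \rme^{\eta(\epsilon-c^*m_f)} .
\]
So I would fix $f$ and $\epsilon$ and prove this scalar inequality. The premise $X_f\stochleq_{u}0$, evaluated at this $\epsilon$, supplies $\E[\rme^{2\eta X_f}]\le\rme^{2\eta\epsilon}$; the \emph{halving} of the rate in the conclusion is exactly what makes a Cauchy--Schwarz step against this bound affordable. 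Moreover $\bar\eta:=\sup_{\epsilon'>0}u(\epsilon')<\infty$ by hypothesis, so $\eta$ stays in the fixed interval $(0,\bar\eta/2]$ throughout, which is what will keep the final $c^*$ uniform over $f$.

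Next I would squeeze two elementary facts out of the witness condition $\E[(-X_f)\ind{-X_f\ge C}]\le c\,\E[-X_f]=cm_f$, by splitting $\E[(-X_f)\ind{-X_f<C}]$ at $0$ and using $-X_f\le C$ on $\{0\le-X_f<C\}$: (i) the moderately negative region carries the negative mean, $\E[X_f\ind{-C<X_f\le0}]\le-(1-c)m_f$; and (ii) there is appreciable mass below zero, $P(X_f\le0)\ge(\E[X_f^+]+(1-c)m_f)/C$. I would also record a sharpened linear bound: since $\bar\eta C/2$ is a fixed finite quantity, there is $\alpha=\alpha(\bar\eta C)\in(0,1)$ with $\rme^{y}\le1+\alpha y$ on $[-\bar\eta C/2,0]$ (e.g.\ $\alpha=(1-\rme^{-\bar\eta C/2})/(\bar\eta C/2)$), so that on $\{-C<X_f\le0\}$, where $\eta X_f\in[-\bar\eta C/2,0]$, one has $\rme^{\eta X_f}\le1+\alpha\eta X_f$ with no constraint linking $\eta$ to $C$.

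The heart of the proof then splits $\E[\rme^{\eta X_f}]$ over $\{X_f\le-C\}$, $\{-C<X_f\le0\}$, $\{X_f>0\}$: bound $\rme^{\eta X_f}\le\rme^{-\eta C}\le1$ on the first; use $\rme^{\eta X_f}\le1+\alpha\eta X_f$ together with (i) on the second; and on the third apply Cauchy--Schwarz, $\E[\rme^{\eta X_f}\ind{X_f>0}]\le(\E[\rme^{2\eta X_f}\ind{X_f>0}])^{1/2}(P(X_f>0))^{1/2}$, then $\E[\rme^{2\eta X_f}\ind{X_f>0}]=\E[\rme^{2\eta X_f}]-\E[\rme^{2\eta X_f}\ind{X_f\le0}]\le\rme^{2\eta\epsilon}-\rme^{-\bar\eta C}P(-C<X_f\le0)$, retaining the negative-region mass (using $\rme^{2\eta X_f}\ge\rme^{-\bar\eta C}$ there) rather than the far lossier $\E[\rme^{2\eta X_f}\ind{X_f>0}]\le\rme^{2\eta\epsilon}$. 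Summing, the bare probabilities collapse ($\rme^{-\eta C}P(X_f\le-C)+P(-C<X_f\le0)\le P(X_f\le0)=:q$), leaving an inequality roughly of the shape
\[
\E[\rme^{\eta X_f}]\ \le\ q-\alpha\eta(1-c)m_f+\sqrt{\bigl(\rme^{2\eta\epsilon}-\rme^{-\bar\eta C}(q-cm_f/C)\bigr)(1-q)},
\]
where I also used $P(X_f\le-C)\le cm_f/C$ (Markov plus witness) to lower-bound $P(-C<X_f\le0)$.

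To close I would pick $c^*\in(0,1]$ small in terms of $c,C,\bar\eta$ --- in particular small enough that $2\eta c^*m_f\le q$ always, via (ii) --- and check that the right-hand side above is $\le\rme^{\eta\epsilon}\rme^{-\eta c^*m_f}$ by elementary estimates, distinguishing the case where $m_f$ (equivalently $q$) is small from the case where it is bounded away from $0$. \textbf{This balancing is the main obstacle:} each Cauchy--Schwarz step costs a constant factor strictly above $1$, so when $m_f$ is small the $\Theta(\eta m_f)$ gain furnished by the witness condition is negligible and one must instead lean on the concavity slack of $\sqrt{\cdot}$ (e.g.\ $\sqrt{1-q}\le\rme^{-q/2}$) together with $\rme^{\eta\epsilon}\ge1$, whereas for larger $m_f$ the witness gain must dominate --- and all of this must hold uniformly in $\epsilon$, including where $\eta$ approaches $\bar\eta/2$; making these pieces fit is precisely what GM's calculation (their Lemma~21) does. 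Note, finally, that only \emph{first} moments of $X_f$ enter (through $\E[X_f\ind{-C<X_f\le0}]$ and $\E[X_f^+]$, the latter finite because $X_f\stochleq_{u}0$ controls the right tail exponentially), so no regularity of the family is needed.
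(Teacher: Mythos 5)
First, a remark on scope: the paper does not actually prove this lemma. It is imported from \cite{GrunwaldM20} (their Lemma~21), with only the comment that GM's proof does not rely on the excess-risk interpretation and that the constant $c^*$ will not be tracked. So there is no in-paper argument to compare against, and your proposal has to stand on its own as a proof.

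It does not close. The preliminaries are all correct: the reduction to showing $\E[\rme^{\eta X_f}]\le \rme^{\eta(\epsilon-c^* m_f)}$ with $\eta=u(\epsilon)/2$, the two consequences (i) and (ii) of the witness condition, the chord bound $\rme^{y}\le 1+\alpha y$ on $[-\bar{\eta}C/2,0]$, and the three-way split with Cauchy--Schwarz against the rate-$2\eta$ hypothesis. But the final balancing step --- which you yourself flag as ``the main obstacle'' and ultimately delegate to ``GM's calculation'' --- genuinely fails for the chain you have set up, for two reasons. First, ``$m_f$ small'' is \emph{not} equivalent to ``$q=P(X_f\le 0)$ small'': point (ii) gives only $q\gtrsim m_f$, not $q\lesssim m_f$, so your case split does not partition the difficulty. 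Second, and fatally, consider $q$ bounded away from $0$ and $1$ while $m_f\to 0$ and $\eta\epsilon\to 0$ (e.g.\ $X_f=\pm\delta$ with probabilities $\tfrac12$, tilted slightly to be subcentered; this is compatible with a weak ESI and with witness). Your displayed bound then tends to $q+\sqrt{\left(1-\rme^{-\bar{\eta}C}q\right)(1-q)}$, which is strictly greater than $1$ for every $q\in(0,1)$ whenever $\rme^{-\bar{\eta}C}<1$ (square both sides: the inequality $\left(1-\rme^{-\bar{\eta}C}q\right)(1-q)>(1-q)^2$ reduces to $\rme^{-\bar{\eta}C}<1$), whereas the target tends to $1$. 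The witness gain $\alpha\eta(1-c)m_f$ is $o(1)$ there and cannot absorb this constant-order deficit, and the $\sqrt{1-q}\le\rme^{-q/2}$ trick does not help either (it yields $q+\rme^{\eta\epsilon}(1-q/2)>\rme^{\eta\epsilon}$ once $\rme^{\eta\epsilon}<2$). The culprit is the step $\E[\rme^{2\eta X_f}\ind{X_f\le 0}]\ge\rme^{-\bar{\eta}C}P(-C<X_f\le 0)$: when $X_f$ sits near $0$ on the negative side, $\rme^{2\eta X_f}$ is near $1$, not near $\rme^{-\bar{\eta}C}$, so you give away a constant factor of probability mass that no choice of $c^*$ can recover. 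The crux of the lemma is precisely how to treat the positive part without this loss, and that step is missing from your argument.
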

GM go out of their way to optimize for the constant $c^*$; our interest being in the big picture here, we will not provide details about the constant. 
It can be seen from the proof that their result does not rely on the $L_f$ having an interpretation as excess risks: it holds for general families $\{X_f: f \in \cF \}$.

It was already discussed in the introduction how GM's Lemma (Lemma~\ref{lem:witness}) can lead to fast rates. Essentially, to design learning algorithms that attain fast rates within this framework one needs  that $\{X_f \}_{f \in \cF}$ is a $\gamma$-strong ESI family for $\gamma < 1$, which gives exponential control of the $X_f$'s right tail, ensuring that empirical losses converge to their mean faster than $1/\sqrt{n}$,  and on top of that one needs witness-of-badness, which gives a very different kind of control of the potentially heavy-tailed left tail, to ensure that this convengence also holds for empirical losses with a constant times their expectation, the empirical risk, subtracted; finally one uses a PAC-Bayesian combination of all $f \in \cF$ to get the desired excess risk bound.

\subsection{The Bernstein conditions and the $\gamma$-strong ESI}
In their general treatment of fast rate conditions, \Citet{erven2015fast} showed how the $u$-central condition for $\{L_f: f \in \cF \}$ with $u(\epsilon) = C^* \epsilon^{\gamma} \wedge \eta^*$ is equivalent to the $\beta$-Bernstein condition, with $\beta = 1-\gamma$. The $\beta$-Bernstein condition is a better known condition for obtaining fast rates in excess risk bounds \Citep[see][]{bartlett_empirical_2006,erven2015fast,audibert_fast_2009}. Their equivalence result only holds for uniformly bounded $L_f$; extending it to general---unbounded---excess risks remained a nagging open question.  In Theorem~\ref{thm:strongesi} below, we fully resolve this issue for abstract families of random variables that do not require an excess risk interpretation. As a by-product, the theorem implies an analogue to Lemma~\ref{lem:witness} that relates $\gamma$-strong ESIs to strengthenings thereof with $c \E[X]$ subtracted as in (\ref{eq:thereturnofthewitness}).

We first recall the standard definition of the Bernstein condition: 
\begin{definition}[$\beta$-Bernstein Condition]
  Let $\beta\in[0,1]$. We say that a family of random variables
  $\{L_f\}_{f\in\calF}$ satisfies the $\beta$-Bernstein condition if, for all $f \in \cF$, $\E[L_f] \geq 0$ and there is
  some $B>0$ such that
  \begin{equation}
    \label{eq:bernstein-easiness-condition}
    \text{for all $f \in \cF$,\ } \E[L_f^2]\leq B(\E[L_f])^\beta.
  \end{equation}
  The 1-Bernstein condition is also known as the {\em strong Bernstein condition}.
\end{definition}
Suppose that $\{L_f\}_{f\in\cal F}$ is a regular family. Then, it is straightforward to show that the family satisfies $\beta$-Bernstein if and only if satisfies $\beta'$-Bernstein for all $\beta' \in [0,\beta]$.
Motivated by this equivalence, we may start considering half-open intervals $[0,\beta)$ --- it turns out that this gives a version of Bernstein that is much better suited for comparing with ESI families for unbounded random variables. Formally: 
\begin{definition}\label{def:almostbernstein}
  Let $I \subseteq [0,1]$ be an interval. We say that a family of random variables  $\{L_f\}_{f\in\calF}$ satisfies the $I$-Bernstein condition if for all $f \in \cF$,  $\E[L_f] \geq 0$ and for all $\beta'\in I$, there is
  some $B>0$ such that
  \begin{equation*}
\text{for all $f \in \cF$,\ }  \E[L_f^2]\leq B(\E[L_f])^{\beta'}.
  \end{equation*}
\end{definition}
It is immediately verified that every family that satisfies the $I$-Bernstein for nonempty $I$ automatically has uniformly bounded second moment, i.e.~it is regular. 

The following theorem shows that for regular families of random variables, the notions of $(b,1]$-strong ESI and $[0,b)$-strong Bernstein coincide (with for the Bernstein condition, $X_f$ replaced by $-X_f$), under a “squared version” of the witness condition defined below; this condition has not been proposed before in the literature, as far as we know.

In Appendix~\ref{app:strongesi} we state and prove an extended version of the theorem, in which the various conditions  on $\{X_f\}_{f \in \cF}$ needed for the various implications in the theorem below are spelled out; these conditions are all implied by regularity but are in some cases weaker. 

\begin{definition}{\ \bf [Squared-Witness Condition]} 
We consider the following condition for a family of random variables $\{U_f : f \in \cF \}$: there  exists $0 < c < 1$ and 
$C > 0$ such that for all $f \in \cF$, 
\begin{equation}\label{eq:tallball}
\E[U_f^2 \ind{U_f^2 \geq C}] \leq c {\bf E}[U_f^2 ].
\end{equation}
\end{definition}
The original witness-of-badness condition (Definition~\ref{def:witness-badness}) is just (\ref{eq:tallball}) with $- X_f$ in the role of $U_f^2$, where  $- X_f$ represents, as in this section, the excess risk. Below we use the equation with $U_f^2 = X_f^2 = (- X_f)^2$ and  also with $U_f^2  = ((X_f)_{-})^2$. 

Special cases of parts of the following theorem for uniformly bounded $X_f$, for which regularity and squared witness automatically hold, were stated and  proven by \cite{koolen2016combining}, and earlier by \cite{GaillardStoltzVanErven2014}. 
\begin{theorem}\label{thm:strongesi}
Let $\{X_f: f \in \cF\}$ be a regular family of random variables that satisfies the squared-witness condition above for $U_f = X_f$ or for $U_f = (X_f)_{-}$. Then the following statements are equivalent: 
\begin{enumerate}
\item 
$\{ - X_f: f \in \cF \}$ satisfies the $[0,b)$-Bernstein condition for some $0 < b < 1$ and $\{X_f : f \in \cF\}$ is an ESI family.
\item  For all $\beta\in [0,b)$, 
for all $c \geq 0$, all $0 \leq c^* < 1$, there exists  $\eta^{\circ} >0 $ and $C^{\circ} >0$ such that for all $f \in \cF$, all $0 < \eta \leq \eta^{\circ}$,
\begin{equation}\label{eq:almostthere}
X_f  + c \cdot \eta \cdot  X^2_f - c^* \cdot  \E[X_f] \stochleq_{\eta} C^{\circ} \cdot \eta^{\frac{1}{1-\beta}},
\end{equation}
or equivalently, by Proposition~\ref{prop:etaepsilon}, there exists $\eta^*, C^* > 0$ such that 
$$X_f  + c \cdot \eta \cdot  X^2_f - c^* \cdot  \E[X_f] \stochleq_u 0,$$ where $u(\epsilon) = C^* \epsilon^{1-\beta} \wedge \eta^*$.
\item For all $\beta\in [0,b)$,  there exists $\eta^{\circ} >0$ and $C^{\circ} > 0$ such that for all $f \in \cF$, all $0 < \eta \leq \eta^{\circ}$, we have  
$$X_f  \stochleq_{\eta} C^{\circ} \eta^{\frac{1}{1-\beta}},$$ i.e.~  
$\{X_f : f \in \cF\}$  is a $(b,1]$-strong ESI family. Equivalently, by Proposition~\ref{prop:etaepsilon}, there exists $\eta^*, C^* > 0$ such that for all $f \in \cF$,  $X_f  \stochleq_u 0$, where $u(\epsilon) = C^* \epsilon^{1-\beta} \wedge \eta^*$. 
\end{enumerate}
\end{theorem}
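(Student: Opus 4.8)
I would establish the equivalence cyclically, $1 \Rightarrow 2 \Rightarrow 3 \Rightarrow 1$, since $2 \Rightarrow 3$ is immediate and essentially all the content sits in $1 \Rightarrow 2$.

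\emph{The easy directions.} For $2 \Rightarrow 3$, instantiate statement 2 with $c = 0$ and $c^* = 0$ (both are allowed by the quantifiers): the resulting bound $X_f \stochleq_\eta C^\circ \eta^{1/(1-\beta)}$, valid for all $0 < \eta \leq \eta^\circ$ and all $f$, is exactly statement 3, and the reformulation with $u(\epsilon) = C^* \epsilon^{1-\beta}\wedge\eta^*$ is Proposition~\ref{prop:etaepsilon} applied with $\gamma = 1-\beta$. For $3 \Rightarrow 1$, the ``ESI family'' half is trivial: take $\beta = 0 \in [0,b)$ to get $X_f \stochleq_u 0$ with the ESI function $u(\epsilon) = C^* \epsilon \wedge \eta^*$. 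For the ``$[0,b)$-Bernstein'' half I would fix $\beta \in [0,b)$ and $f$, set $a_f = -\E[X_f] \geq 0$ (nonnegativity follows from Proposition~\ref{prop:esi_characterization} on letting $\eta \downarrow 0$), and use $X_f \stochleq_\eta C^\circ\eta^{1/(1-\beta)}$, i.e.\ $\E[\rme^{\eta X_f}] \leq \rme^{C^\circ \eta^{1+1/(1-\beta)}}$, together with a \emph{lower} bound on the left-hand side obtained by a truncated Taylor expansion. Concretely, using $\rme^{y} \geq 1 + y + c_M y^2$ for $y \geq -M$ (where $c_M = (\rme^{-M}-1+M)/M^2 \in (0,\tfrac12]$ and $M \geq 1$), discarding the nonnegative contribution of $\{X_f < -M/\eta\}$, noting that the linear part there is negative, and invoking the squared-witness condition to see $\E[X_f^2\ind{X_f < -M/\eta}] \leq c\,\E[X_f^2]$ once $M/\eta \geq \sqrt{C}$, one gets $\E[\rme^{\eta X_f}] \geq 1 - \eta a_f + c_M(1-c)\eta^2\E[X_f^2]$. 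Combining the two bounds (and $\rme^x - 1 \leq 2x$ for small $x$) yields $c_M(1-c)\eta^2\E[X_f^2] \leq 2C^\circ\eta^{1+1/(1-\beta)} + \eta a_f$; choosing $\eta \asymp a_f^{1-\beta}$---legitimate since $a_f \leq (\sup_f \E[X_f^2])^{1/2} < \infty$ by regularity---makes both terms on the right of order $a_f^\beta$, giving $\E[X_f^2] \leq B a_f^\beta$ uniformly in $f$. (If $a_f = 0$ the same mgf bound with exponent $1 + 1/(1-\beta) > 2$ forces $\E[X_f^2] = 0$, so Bernstein still holds.)

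\emph{The main direction $1 \Rightarrow 2$.} Fix $\beta \in [0,b)$, $c \geq 0$, $0 \leq c^* < 1$. Since $\{X_f\}$ is a regular ESI family, Proposition~\ref{prop:newversionsubgammamain} supplies uniform right-subgamma control and an ESI function linear near $0$. The heart of the argument is the uniform estimate
\begin{equation}\tag{$\star$}
\A^\eta\!\left[X_f + c\eta X_f^2\right] \ \leq\ \E[X_f] + K\eta\,\E[X_f^2], \qquad 0 < \eta \leq \eta_1,
\end{equation}
with constants $K, \eta_1 > 0$ independent of $f$. Granting $(\star)$, the $[0,b)$-Bernstein bound $\E[X_f^2] \leq B(-\E[X_f])^\beta$ finishes the proof by a purely algebraic step: writing $a = -\E[X_f] \geq 0$, the right-hand side of $(\star)$ is at most $-a + KB\eta a^\beta = c^*(-a) + \big(-(1-c^*)a + KB\eta a^\beta\big)$, and the bracket, maximized over $a \geq 0$ by elementary calculus (critical point $a_* = (\beta K B\eta /(1-c^*))^{1/(1-\beta)}$; for $\beta = 0$ read $a^0 \equiv 1$), equals $C^\circ\eta^{1/(1-\beta)}$ for a suitable $C^\circ$ depending on $\beta, K, B, c^*$. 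Hence $\A^\eta[X_f + c\eta X_f^2 - c^*\E[X_f]] \leq C^\circ\eta^{1/(1-\beta)}$, which is precisely the ESI in statement 2; the equivalent $u(\epsilon) = C^*\epsilon^{1-\beta}\wedge\eta^*$ form then follows by the matching argument in the proof of Proposition~\ref{prop:etaepsilon}.

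\emph{Proving $(\star)$, and the main obstacle.} To establish $(\star)$ I would split $\E[\rme^{\eta X_f + c\eta^2 X_f^2}]$ over $\{|X_f| \leq t\}$ versus $\{|X_f| > t\}$, where $t$ is a threshold built from the squared-witness constant. On the bulk, Taylor's theorem gives $\rme^{\eta x + c\eta^2 x^2} \leq 1 + \eta x + (c + C_t)\eta^2 x^2$ for $\eta \leq \eta_1$ small, so that part contributes at most $1 + \eta\E[X_f\ind{|X_f| \leq t}] + (c + C_t)\eta^2\E[X_f^2]$. On the tail, the squared-witness condition confines the dangerous part of the second moment, while the residual exponential contribution is controlled by combining the uniform exponential right-tail bound coming from the ESI family (via Propositions~\ref{prop:esi_characterization} and~\ref{prop:exponential_tail_esi}) with the uniform second-moment, hence left-tail, control; collecting terms and taking $\eta^{-1}\log$ gives $(\star)$. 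The main obstacle is exactly this estimate: one has to (i) keep the moment generating function of the perturbed variable $X_f + c\eta X_f^2$ under control---the quadratic term interacts delicately with the tails, so both the truncation and the squared-witness condition are indispensable, forcing the heavy mass to be only quadratic rather than exponential in the exponent---and (ii) make the right-hand side of $(\star)$ scale with $\E[X_f^2]$ (so that, after Bernstein, it scales with $(-\E[X_f])^\beta$) rather than picking up an additive constant, the need for genuine second-moment/left-tail control here being precisely the phenomenon isolated by Example~\ref{ex:varianceneeded}. This estimate parallels and extends both GM's Lemma~21 (Lemma~\ref{lem:witness}) and the proof of Theorem~\ref{thm:muriel}; carrying it out uniformly over $\cF$ with the quadratic term present and with the sharp exponent $1/(1-\beta)$ is the technical core of the theorem, and the extended statement in Appendix~\ref{app:strongesi} records exactly which sub-regularity hypotheses each implication actually needs.
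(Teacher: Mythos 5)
Your architecture ($1\Rightarrow2\Rightarrow3\Rightarrow1$) matches the paper's, which routes everything through Theorem~\ref{thm:strongesib} and Lemma~\ref{lem:strongESIregularity} in Appendix~\ref{app:strongesi}. The step $2\Rightarrow3$, and the reduction of $1\Rightarrow2$ to your estimate $(\star)$ followed by the Bernstein substitution and the maximization over $a=-\E[X_f]$, are correct --- the latter is exactly the paper's ``linearization'' via Young's inequality \eqref{eq:useful-young-inequality}, done by direct calculus instead. Your $3\Rightarrow1$ is also sound and in fact more explicit than the paper's (which establishes \textbf{C2} of Lemma~\ref{lem:strongESIregularity} from the squared-witness condition and then omits Theorem~\ref{thm:strongesib}, Part~2 as ``similar but much easier''): the truncated lower Taylor bound $\rme^y\geq 1+y+c_My^2$ on $y\geq -M$, the use of squared-witness to compare $\E[X_f^2\ind{X_f<-M/\eta}]$ with $\E[X_f^2]$, and the choice $\eta\asymp(-\E[X_f])^{1-\beta}$ together give precisely the intended lower bound on the moment generating function.

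The genuine gap is in your proof of $(\star)$, i.e.\ in the case $c>0$ of $1\Rightarrow2$, which is the technical core. Your plan controls the tail contribution $\E[\rme^{\eta X_f+c\eta^2X_f^2}\ind{X_f>t}]$ ``by the uniform exponential right-tail bound coming from the ESI family.'' This cannot work as stated: a regular ESI family only guarantees $\E[\rme^{\eta^*X_f}]<\infty$ for some finite $\eta^*$, i.e.\ an exponentially decaying right tail, and $\rme^{c\eta^2x^2}$ eventually dominates $\rme^{-\lambda x}$ for every $\lambda$; for a family with an exactly exponential right tail, $\E[\rme^{c\eta^2X_f^2}\ind{X_f>t}]=+\infty$ for every $\eta,c>0$, so no fixed truncation level $t$ rescues the bulk/tail decomposition. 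The paper avoids ever exponentiating $X_f^2$: by the ``Extended Taylor'' Lemma~\ref{lem:taylor} it bounds $\E[\rme^{\eta W}]\leq 1+\eta\E[W]+\tfrac12\eta^2\E\bigl[W^2\rme^{\eta'X_f}\bigr]$ with $W=X_f-c^*\E[X_f]+\eta cX_f^2$, so that after expanding $W^2$ the only quantities needed are $\E[|X_f|^{2+k}\rme^{\eta'X_f}]$ for $k\in\{0,2\}$ --- polynomial-times-exponential moments that are finite and, crucially, bounded by $C^{\circ}(\E[X_f^2])^{1-\delta}$ via condition \textbf{C1} of Lemma~\ref{lem:strongESIregularity}, Part~1. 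That last bound is itself a nontrivial H\"older argument you would still need to supply, since $(\star)$ must scale with $\E[X_f^2]$ rather than with an $f$-independent constant. To repair your argument, replace the bulk/tail split of the full exponential by a Taylor-with-remainder step of this type; as written, the step controlling the right tail would fail.
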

Note that, if $\{-X_f: f \in \cF \}$ satisfies $[0,1)$-Bernstein, then ${\bf E}[X_f] \leq 0$ for all $f \in \cF$; also, $X^2_f \geq 0$. Therefore, the implication (2) $\Rightarrow$ (3) is trivial. 
The proof of Theorem~\ref{thm:strongesi} is based on Theorem~\ref{thm:strongesib} and Lemma~\ref{lem:strongESIregularity} in the appendix, which, taken together, are a bit stronger than Theorem~\ref{thm:strongesi}, which comes at the price of a more complicated statement. In a nutshell, on  one hand, the implication (1) $\Rightarrow (2)$ still holds even if the witness-type condition does not hold. On the other hand, the implication (2) $\Rightarrow$ (3) $\Rightarrow$ (1) still holds if the right-hand side of (\ref{eq:almostthere}) is replaced by $0$ (strong ESI family) and then the conclusion in (3) also becomes that (a)  $X_f \stochleq_{\eta^*} 0$ and, in (1), that (b) $\{-X_f: f \in \cF\}$ satisfies $[0,1]$-Bernstein. 
Thus, the implication (3) $\Rightarrow (2)$ can be seen as a second-order analogue of Lemma~\ref{lem:witness}, allowing not just $c^* \E[X_f]$ but also $c \eta X^2_f$ to be added to $X_f$, at the price of requiring the witness-squared rather than the standard witness condition. 

Having an ESI with $X^2$ outside of the expectation is not  needed for the excess-risk bound discussed in the introduction, but it is crucial for several other PAC-Bayesian generalization bounds that also achieve faster rates (better data-dependent bounds) if the data are sampled from a distribution such that a Bernstein condition holds \citep[see][]{mhammedi_pac-bayes_2019}.

\section{PAC-Bayes}\label{sec:pacbayes}

In this section we prove and write the PAC-Bayesian bounds
\Citep[see][]{mcallester_pac-bayesian_1998,van_erven_pac-bayes_2014,catoni_pac-bayesian_2007,Guedj19,Alquier21}
in ESI notation, under which they take a pleasant look. Importantly, we find that in the existing literature,  “applying the PAC-Bayesian” or “Donsker-Varadhan change-of-measure” technique can really mean at least three different things. Using the annealed expectation notation together with ESI can disentangle these different uses, appearing as the three different parts in Proposition~\ref{prop:pacbayes} below.  We let 
$\{X_f\}_{f\in\calF}$ again be a family of random variables. Let $\Pi$ and
$\hat{\Pi}$ be two equivalent probability measures (two probability measures
with the same null sets) on $\calF$ such that their mutual Radon-Nykodim
derivatives exist. Define the Kulback-Leibler divergence $\KL(\hat{\Pi}\|\Pi)$
as
\begin{equation*}
  \KL(\hat{\Pi}\|\Pi) = \E_{\hat{\Pi}}\sqbrack{\log\frac{\rmd \hat{\Pi}}{\rmd \Pi}}.
\end{equation*}
PAC-Bayesian theorems are based on the relation of convex duality that exists
between the Kullback-Leibler divergence and the cumulant generating function---the logarithmic moment generating function. We state them here as strong ESIs but, since the following results hold for all $\eta > 0$, it also follows that they also hold with $\eta$ replaced by  any ESI function $u$.

We continue to assume that there are i.i.d.~$Z, Z_1, \ldots, Z_n$ such that, for all $f \in \cF$, $X_f= g_f(Z)$ and $X_{f,i}= g_{f}(Z_i)$ can be written as a function of $Z$ and $Z_i$  respectively for some function $g_f$
Hence the distribution of $Z$ determines the distribution of $X_f$ and $X_{f,i}$ for all $i \in [n],f \in \cF$. In this section we need to pay special attention to the notation; $P$ is not the only measure that plays a role. We will write the relevant measure in subscript of $\E$ and ${\bf A}$. Thus,  $\E_{(Z,f)\sim P\bigotimes\Pi}[X_f] = \iint g_f(Z)\rmd \Pi(f)\rmd P(Z)$---notice that $\Pi$ might depend on $Z$. With this in mind, we state the following proposition.  

\begin{proposition}\label{prop:pacbayes}
  Let $\bracks{X_f}_{f\in\calF}$ be a family of random variables and let $\eta > 0$. Then for any two equivalent
  distributions $\Pi_0$ and $\hat{\Pi}$ on $\calF$ we have: 
  \begin{enumerate}
  \item The following ESI holds:
\begin{equation}\label{eq:newfirstpacbayes}
\E_{\bar{f} \sim \hat{\Pi}} [X_{\bar{f}}]  - \A^\eta_{(Z,\bar{f})\sim P \bigotimes \Pi_0}[X_{\bar{f}}] \stochleq_{\eta}   \frac{\KL(\hat{\Pi} \| \Pi_0)}{\eta} .
\end{equation}
  \item
  Suppose further that
  for each $f\in\calF$, $X_f \stochleq_{\eta} 0$. Then we have: 
  \begin{equation}\label{eq:oldfirstpacbayes}
    \E_{\bar{f} \sim \hat{\Pi}} [X_{\bar{f}}] \stochleq_{\eta} \frac{\KL(\hat{\Pi} \| \Pi_0)}{\eta}.
\end{equation}
\item Now let again $\bracks{X_f}_{f\in\calF}$ be an arbitrary family.
We have:
\begin{equation}\label{eq:oldsecondpacbayes}
\E_{\bar{f} \sim \hat{\Pi}} [X_{\bar{f}} - \A^\eta_{Z \sim P}[X_{\bar{f}}]] \stochleq_{\eta}   \frac{\KL(\hat{\Pi} \| \Pi_0)}{\eta}.
\end{equation}
\end{enumerate}
\end{proposition}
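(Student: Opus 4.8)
The plan is to obtain \eqref{eq:newfirstpacbayes} of Part~1 as the central inequality---a Donsker--Varadhan change-of-measure estimate applied for each realisation of the sample $Z$ separately, followed by a single outer expectation over $Z$---and then to read off Parts~2 and~3 as short corollaries. The only tool needed is the change-of-measure bound: for any measurable $h:\cF\to\reals$ and any $\hat{\Pi}$ equivalent to $\Pi_0$,
\begin{equation*}
\E_{\bar{f}\sim\hat{\Pi}}[h(\bar{f})]-\KL(\hat{\Pi}\|\Pi_0)\ \leq\ \log\E_{\bar{f}\sim\Pi_0}\bigl[\rme^{h(\bar{f})}\bigr],
\end{equation*}
which I would prove in two lines by introducing the Gibbs measure $\Pi^{*}$ with $\rmd\Pi^{*}\propto\rme^{h}\rmd\Pi_0$ and observing that the left-hand side equals $\KL(\hat{\Pi}\|\Pi_0)-\KL(\hat{\Pi}\|\Pi^{*})\le\KL(\hat{\Pi}\|\Pi_0)$.

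For Part~1, I would fix a realisation of $Z$ and apply this bound with $h(f)=\eta\,g_f(Z)=\eta X_f$; this is legitimate for each $Z$ separately even though $\hat{\Pi}=\hat{\Pi}|Z$ depends on $Z$. Exponentiating gives, pointwise in $Z$, $\exp(\eta\E_{\bar{f}\sim\hat{\Pi}}[X_{\bar{f}}]-\KL(\hat{\Pi}\|\Pi_0))\le\E_{\bar{f}\sim\Pi_0}[\rme^{\eta X_{\bar{f}}}]$. Taking $\E_{Z\sim P}$ of both sides and interchanging with $\E_{\bar{f}\sim\Pi_0}$ on the right (legitimate by Tonelli, the integrand being nonnegative and $\Pi_0$ not depending on $Z$) yields $\E_{Z\sim P}[\exp(\eta\E_{\bar{f}\sim\hat{\Pi}}[X_{\bar{f}}]-\KL(\hat{\Pi}\|\Pi_0))]\le\E_{(Z,\bar{f})\sim P\bigotimes\Pi_0}[\rme^{\eta X_{\bar{f}}}]=\rme^{\eta\A^\eta_{(Z,\bar{f})\sim P\bigotimes\Pi_0}[X_{\bar{f}}]}$. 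Since the annealed expectation on the right is a deterministic constant, dividing through and unwinding the shorthand of \eqref{eq:extranotation} is exactly \eqref{eq:newfirstpacbayes}.

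Part~2 would then follow as a specialisation: the hypothesis $X_f\stochleq_\eta0$ for each $f$ reads $\E_{Z\sim P}[\rme^{\eta X_f}]\le1$, so $\E_{(Z,\bar{f})\sim P\bigotimes\Pi_0}[\rme^{\eta X_{\bar{f}}}]=\E_{\bar{f}\sim\Pi_0}\E_{Z\sim P}[\rme^{\eta X_{\bar{f}}}]\le1$, i.e.\ $\A^\eta_{(Z,\bar{f})\sim P\bigotimes\Pi_0}[X_{\bar{f}}]\le0$. Hence $\E_{\bar{f}\sim\hat{\Pi}}[X_{\bar{f}}]\le\E_{\bar{f}\sim\hat{\Pi}}[X_{\bar{f}}]-\A^\eta_{(Z,\bar{f})\sim P\bigotimes\Pi_0}[X_{\bar{f}}]$ pointwise (the subtracted term being a nonpositive constant), and since the exponential moment is monotone---the evident left-hand analogue of Proposition~\ref{prop:useful-properties-esi}, item~\ref{item:esi-properties-leq}---the Part~1 ESI upgrades to \eqref{eq:oldfirstpacbayes}. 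Part~3 would follow by applying Part~2 to the annealed-centred family $Y_f:=X_f-\A^\eta_{Z\sim P}[X_f]$: for each fixed $f$, $\A^\eta_{Z\sim P}[X_f]$ is a deterministic constant and $Y_f\stochleq_\eta0$ by \eqref{eq:annealedpower}, so Part~2 gives $\E_{\bar{f}\sim\hat{\Pi}}[Y_{\bar{f}}]\stochleq_\eta\KL(\hat{\Pi}\|\Pi_0)/\eta$, and $\E_{\bar{f}\sim\hat{\Pi}}[Y_{\bar{f}}]=\E_{\bar{f}\sim\hat{\Pi}}[X_{\bar{f}}-\A^\eta_{Z\sim P}[X_{\bar{f}}]]$ is the left-hand side of \eqref{eq:oldsecondpacbayes}.

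I expect no conceptual obstacle here---the work is entirely bookkeeping: keeping straight which measure each $\E$ and $\A^\eta$ is taken under, exploiting that the change-of-measure step is applied for each $Z$ separately so that the $Z$-dependence of $\hat{\Pi}$ is harmless, and noting that every interchange-of-integration step involves only nonnegative integrands. The one edge case that needs a word is when a relevant annealed expectation is $+\infty$ (for instance $\A^\eta_{Z\sim P}[X_f]=+\infty$), in which case $\rme^{\eta Y_f}=0$ and the ESI in question holds trivially.
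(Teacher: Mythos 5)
Your proposal is correct and follows essentially the same route as the paper: the Donsker--Varadhan/Gibbs variational formula for $\KL$ applied pointwise in $Z$, exponentiation, an outer $P$-expectation combined with Fubini--Tonelli, and then Part~2 from the nonpositivity of the annealed term under the hypothesis $X_f\stochleq_\eta 0$ and Part~3 by applying Part~2 to $X_f-\A^\eta_{Z\sim P}[X_f]$ via \eqref{eq:annealedpower}. One tiny slip in your sketch of the change-of-measure bound: with $\rmd\Pi^*\propto\rme^{h}\rmd\Pi_0$ the left-hand side equals $\log\E_{\bar f\sim\Pi_0}[\rme^{h(\bar f)}]-\KL(\hat\Pi\|\Pi^*)$ rather than $\KL(\hat\Pi\|\Pi_0)-\KL(\hat\Pi\|\Pi^*)$, but since $\KL(\hat\Pi\|\Pi^*)\geq 0$ the stated inequality and everything downstream are unaffected.
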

In case the family $\{X_f \}_{f \in \cF}$ satisfies $X_f \stochleq_{\eta} 0$ for all $f \in \cF$, then $\A^{\eta}[X_f]$ is negative and the third result is a “boosted” version of the second, and therefore should usually give stronger consequences. 
\begin{proof}
  For Part 1: the variational formula for the $\KL$ divergence (which appeared already in the
  work of \citet{gibbs_elementary_1902}) states that
  \begin{equation*}
    \log\E_{\bar{f}\sim\Pi}[\rme^{\eta X_{\bar{f}}}] \geq \eta\E_{\bar{f}\sim\hat{\Pi}}[X_{\bar{f}}]  - \KL(\hat{\Pi}\|\Pi)
  \end{equation*}
  Taking exponentials, $P$-expected value on both sides, and using
  Fubini's theorem,
  \begin{equation*}
    \E_{\bar{f}\sim\Pi}\E_{Z\sim P}[\rme^{\eta X_f}]
    \geq
    \E_{Z\sim P}\sqbrack{\exp\paren{\eta \E_{\bar{f}\sim\hat{\Pi}}[X_{\bar{f}}]  - \KL(\hat{\Pi}\|\Pi)}},
  \end{equation*}
  which is a rewriting of the result. Part 2 follows from Part 1 
 by noting that in this case we further have $1 \geq \E_{\bar{f}\sim \Pi}\E_{Z\sim P}[\rme^{\eta X_{\bar{f}}}]$. 
Part 3 follows from using Part 2 with $X_{\bar{f}}$ replaced by $X_{\bar{f}} - 
\A^{\eta}_{Z\sim P}[X_f]$; for this new random variable, ESI is guaranteed by  the simple observation (\ref{eq:annealedpower}) in Proposition~\ref{prop:useful-properties-esi} so that Part 3 follows.
\end{proof}
 The second result is the most straightforward one and has been used to derive many PAC-Bayesian results, e.g.~\cite{seldin2012pac,tolstikhin2013pac,wusplit,mhammedi_pac-bayes_2019}. 
 The third result, illustrated in Example~\ref{ex:zhang} below, has  been (implicitly) used to get PAC-Bayesian {\em excess-risk\/} bounds such as those by \cite{Zhang06a,Zhang06b} and \cite{GrunwaldM20}. 
 The first result, illustrated in Example~\ref{ex:begin}, can be used to derive a whole class of PAC-Bayesian bounds that include one of the strongest and best-known early bounds, the Langford-Seeger-Maurer bound \citep{Seeger02,langford2003pac,maurer2004,Alquier21}. It would be interesting to see how recent articles establishing bounds based on conditional mutual information (which can be thought of as an in-expectation version of a specific PAC-Bayesian bound) fit in. For example, \citet{GrunwaldSZ21} uses the second result, but this is not so clear for recent bounds such as those by \cite{HellstromD22}.
\begin{example}{ \bf\ [Zhang's Inequality]\ }{\label{ex:zhang}\rm
Zhang's inequality \citep{Zhang06a,Zhang06b} provides one of the strongest PAC-Bayesian-type excess-risk bounds in the literature; more precisely, it gives a ``proto-bound'' which can then be further specialized to a wide variety of settings. For $i=1,\dots, n$ and each $f\in \cF$, let $X_{f,i}$ be i.i.d.~copies of $X_f$. By (\ref{eq:annealedpower}) in Proposition~\ref{prop:useful-properties-esi} combined with Proposition~\ref{prop:esi_sums} we automatically have that, for all $\eta > 0$, for all $ f \in \cF$, $\sum_{i=1}^n X_{f,i} - n \A^{\eta}[X_f] \stochleq_\eta 0$ for every ESI function $\eta$. 
Zhang's bound, which using ESI notation we can give simultaneously in its expectation and in-probability version, is quite simply the result of applying the  PAC-Bayes bound of Part 3 in Proposition~\ref{prop:pacbayes} to these ESIs, and then dividing everything by $n$: 
\begin{equation}\label{eq:zhang}
\E_{\bar{f} \sim \hat{\Pi}} \left[\frac{1}{n} \sum_{i=1}^n X_{\bar{f},i} - \A^{\eta}_{Z\sim P}[X_{\bar{f}}] \right] \stochleq_{n \eta} \frac{\KL(\hat{\Pi} \| \Pi_0)}{n \eta},
\end{equation}
where we note that, as defined in the introduction, in Zhang's work the $X_f$ represent minus excess risks, $X_f = L_f$ with $L_f = L_f(Z) = \ell_f(Z) - \ell_{f^*}(Z)$.
The basic bound can then further be refined by bounding $\A^{\eta}$. In the setting of well-specified density estimation (in which $f^*$ is the density of the underlying $P$) the $f$'s represent densities and $\ell_f(z) = - \log f(z)$ is the log-score. For fixed $\eta=1/2$ $\A^{1/2}$ is the R\'enyi divergence of order $1/2$ \Citep{ErvenH14}, which is an upper bound on the Hellinger distance. In that case, Zhang's bound becomes a risk bound for density estimation. For other loss functions we proceed as follows: since the bound holds under no further conditions at all, for every $\eta > 0$, it still holds if we replace $\eta$ by an arbitrary ESI $u$. $\A^{u}_{Z\sim P}[X_f]$ can then be bounded in terms of $\E_{Z\sim P}[X_f]$ for appropriate $\gamma$-strong ESI function $u$. This is what was done in \cite[Lemma 21]{GrunwaldM20} which we restated here in ESI language as Proposition~\ref{lem:witness}---we essentially followed their reasoning in the introduction while avoiding the explicit use of $\A^{u}$ there. 
}
\end{example}

\begin{example}{\label{ex:begin}{\bf \ [B\'egin et al.'s unified derivation]} \rm 
\cite{begin2016pac} implicitly used the first result (Part 1 of the proposition above) to unify several PAC-Bayesian {\em generalization\/} bounds. They work in the same statistical learning setup as in the introduction, so $\ell_f(Z)$ represents the loss predictor $f$ makes on outcome $Z$, and the aim is to bound, with high probability, the expected loss $\E_{\bar{f} \sim \hat{\Pi}} \E_{Z \sim P}[\ell_{\bar{f}}(Z)]$ of the learned distribution on classifiers $\hat{\Pi}$, when applied by drawing a $\bar{f}$ randomly from $\hat{\Pi}$, in terms of the behaviour of $\hat{\Pi}$ on the training sample, $\E_{\bar{f} \sim \hat\Pi}
\left[\frac{1}{n} \sum_{i=1}^n \ell_{\bar{f}}(Z_i) \right]$.
In our (significantly compressed) language, they reason as follows:  
let $a \in \reals^+ \cup \{\infty\}$ and suppose we have a jointly convex divergence $\Delta: [0,a] \times [0,a] \rightarrow \reals^+_0$, where by “divergence” we mean that  $\Delta(c,c') \geq 0$ for all $c,c'\in [0,a]^2$ and $\Delta(c,c') = 0$ iff $c = c'$.
Upon defining $X_f = \Delta(n^{-1} \sum_{i=1}^n \ell_f(Z_i), \E_P[\ell_f])$, 
we get, using Jensen's inequality,
\begin{align*}
&    \Delta
\left(\E_{f \sim \hat\Pi}
\left[\frac{1}{n} \sum_{i=1}^n \ell_f(Z_i) \right], \E_{f \sim \hat\Pi} \E_{Z \sim P} [ \ell_f(Z)]
\right)\\ \leq 
    & \E_{f \sim \hat\Pi} \left[ 
    \Delta
    \left(\frac{1}{n} \sum_{i=1}^n \ell_f(Z_i),\E_{Z \sim P} [ \ell_f(Z)] \right)
    \right] \\ 
    & =  \frac{1}{n}  \E_{f \sim \hat\Pi} \left[ n \cdot
    \Delta
    \left(\frac{1}{n} \sum_{i=1}^n \ell_f(Z_i),\E_{Z \sim P} [ \ell_f(Z)] \right)
    \right]
    \\
&    \stochleq_{\eta} 
    \frac{1}{n} \left( \A^\eta_{(Z_i,f)\sim P \bigotimes \Pi_0} \left[ n \Delta \left(\frac{1}{n} \sum_{i=1}^n \ell_f(Z_i), \E_{Z\sim P}[\ell_f(Z)] \right)\right]  +  \frac{\KL(\hat{\Pi} \| \Pi_0)}{\eta}  \right),
\end{align*}
where $\A^\eta_{(Z_i, f)\sim P \bigotimes \Pi_0}[ \Delta(n^{-1} \sum_{i=1}^n \ell_f(Z_i), \E_P[\ell_f])]$ can be further bounded to get some well-known existing PAC-Bayes bounds such as the {\em Langford-Seeger-Maurer bound\/} \citep{Alquier21}. The latter is obtained by taking $\Delta$ as the KL divergence and letting $\eta$ depend on $n^{-1} \sum \ell_f(Z_i)$ in a clever way.  
}
\end{example}

\commentout{
\begin{corollary}
  Let $\bracks{X_f}_{f\in\calF}$ be a family of random variables. Suppose that
  there exist constants $C,\eta>0$ such that for each $f\in\calF$,
  $X_f \stochleq_{\eta^*} C$ and let $\eta > 0$. Then for any two equivalent distributions $\Pi$
  and $\hat{\Pi}$ on $\calF$
  \begin{equation}\label{eq:firstpacbayes}
    \E_{\hat{\Pi}} [X_f] \stochleq_{\eta} \frac{\KL(\hat{\Pi} \| \Pi_0)}{\eta} +
    \frac{1}{2}\frac{v\eta}{1-c\eta}
  \end{equation}
  for $0\leq \eta < 1/c $ and some constants $c,v>0$.
\end{corollary}

{\em Question: can we do this with variable $\eta$ depending on data, perhaps
  incuring a $\log \log n$ cost for choice of $\eta$?}
}

\section{ESI with random $\eta$}\label{sec:random}

In some applications, we will want $\eta$ to be estimated itself in terms of
underlying data, i.e.~it becomes a random variable $\hat\eta$; trying to learn $\eta$ from the data is a recurring theme in one of the author's work, starting with his first learning theory article \citep{grunwald1999viewing}, and shown to be possible in some situations using the {\em safe-Bayesian algorithm\/} \citep{grunwald_safe_2012} while leading to gross problems in others \Citep{GrunwaldO17}. Also, the fine-tuning of parameters in several PAC-Bayes bounds (e.g.~Catoni's [\citeyear{catoni_pac-bayesian_2007}] or the one in \cite{mhammedi_pac-bayes_2019}) can be reinterpreted in terms of an $\eta$ determined by the data. The goal of the present section is to extend
the  ESI definition to this case, allowing us to get a more general idea of what is possible with random $\eta$ than in the specific cases treated in the aforementioned articles. In this section we only consider strong ESIs, i.e.~$X\stochleq_{\eta} Y$ rather than $X \stochleq_u Y$.

Interestingly, many properties still go through for ESI with random $\eta$, but the 
in-expectation implication gets weakened---and its proof is not trivial any
more. 
\begin{definition}[ESI with random $\eta$]
  Let $\hat\eta$ be a random variable with range $H \subset \reals^+$ such
  that $\inf H > 0$. Let $\{X_{\eta} : \eta \in H \}$ and
  $\{Y_{\eta} : \eta \in H \}$ be two collections of random variables. We
will write   
  \begin{align}\label{eq:randomesi}
    X_{\hat\eta} \stochleq_{\hat\eta} \ \ Y_{\hat\eta}  \ \ \text{ as shorthand for }
    \ \
    \hat\eta(X_{\hat\eta} - Y_{\hat\eta})\stochleq_1 0
\end{align}
\end{definition}

We can still get an in-expectation result from random-$\eta$-ESI with a small
correction. It is trivial to give bounds for the expectation with $1/\eta_{\min}$---with $\eta_{\min}$ the largest lower bound of $H$---
as a leading constant. However, since we want to work with $\hat\eta$ that
are very small in “unlucky” cases but large in lucky cases, and we want to
exploit lucky cases, this is not good enough. The following result, which extends Proposition~ \ref{prop:esi_characterization} and~\ref{prop:exponential_tail_esi} to the random $\eta$ case and, in contrast to those propositions, is far from trivial, shows that we
can instead get a dependence of the form $1/\hat{\eta}$, which is of the same
order as what we lose anyway, even for fixed $\eta$, if we want our results to hold with high
probability. 


We let $\{X_{\eta}:  \eta \in \cG \}$ and $\{Y_{\eta}:  \eta \in \cG \}$ be any two collections of random variables.
\begin{theorem}
\label{thm:ESIbothways}
  Let $\cG$, $X_{\eta}$ and $Y_{\eta}$ be as above, with $H$ finite. We have:
  \begin{enumerate}\item If $X_{\hat\eta} \stochleq_{\hat\eta} \ Y_{\hat\eta}$, then for any $\delta \in]0,1[$,
    \begin{align}\label{eq:juditha}
      &\prob \left( X_{\hat\eta} \leq Y_{\hat\eta} + \frac{\log \frac{1}{\delta}}{\hat\eta} \right) \geq 1-\delta, \\
     \text{and} \quad & \E \left[X_{\hat\eta}  \right] \leq \E \left[ Y_{\hat\eta} +\frac{1}{\hat\eta}\right]. \label{eq:inexp}
    \end{align}
\item As a partial converse, if (\ref{eq:juditha}) holds, then we have
    \begin{align}
    \label{eq:partialconverse}
      X_{\hat\eta} \stochleq_{\frac{\hat\eta}{2}}
      Y_{\hat\eta} + \frac{2\log 2}{\hat\eta}.
    \end{align}
  \end{enumerate}
\end{theorem}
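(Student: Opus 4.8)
The plan is to establish the three assertions in the order~\eqref{eq:juditha}, \eqref{eq:partialconverse}, \eqref{eq:inexp}: the first two are short Markov / layer-cake arguments that mirror the fixed-$\eta$ Propositions~\ref{prop:esi_characterization} and~\ref{prop:exponential_tail_esi} carried out pointwise in the random $\hat\eta$, whereas the in-expectation bound~\eqref{eq:inexp} is the substantive part and is where the finiteness of $H$ is genuinely needed.

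For~\eqref{eq:juditha}: by~\eqref{eq:randomesi} the hypothesis is literally $\E[\rme^{\hat\eta(X_{\hat\eta}-Y_{\hat\eta})}]\le 1$, so I would apply Markov to the nonnegative variable $\rme^{\hat\eta(X_{\hat\eta}-Y_{\hat\eta})}$ at level $1/\delta$, then take logarithms inside the resulting event and divide by $\hat\eta$ (legitimate since $\inf H>0$ forces $\hat\eta>0$ a.s.). For~\eqref{eq:partialconverse}: write $W=X_{\hat\eta}-Y_{\hat\eta}$ and $V=\hat\eta W$; reparametrising~\eqref{eq:juditha} by $t=\log(1/\delta)$ gives $\prob(V\ge t)\le\rme^{-t}$ for $t>0$ (and $\le 1$ for $t\le 0$), and the layer-cake formula then yields
\[
\E[\rme^{V/2}]=\int_0^\infty\prob(V\ge 2\log s)\,\rmd s\le\int_0^1 1\,\rmd s+\int_1^\infty s^{-2}\,\rmd s=2 .
\]
Since $\E[\rme^{V/2}]\le 2$ is the same statement as $\tfrac{\hat\eta}{2}\big(X_{\hat\eta}-Y_{\hat\eta}-\tfrac{2\log 2}{\hat\eta}\big)\stochleq_1 0$, i.e.\ $X_{\hat\eta}\stochleq_{\hat\eta/2}Y_{\hat\eta}+\tfrac{2\log 2}{\hat\eta}$, this is the claimed converse --- the analogue of Proposition~\ref{prop:exponential_tail_esi} with $a=1$, $b=\hat\eta$, $\eta'=\hat\eta/2$, now taken under the expectation.

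The heart of the matter is~\eqref{eq:inexp}. Write $H=\{\eta_1,\dots,\eta_K\}$, $A_k=\{\hat\eta=\eta_k\}$, $p_k=\prob(A_k)$, $W=X_{\hat\eta}-Y_{\hat\eta}$, and $q_k=\E[\rme^{\eta_k W}\indicator_{A_k}]\ge 0$. The target $\E[W]\le\E[1/\hat\eta]$ reduces, after discarding the harmless $\E[W\indicator_{\{W<0\}}]\le 0$, to bounding $\E[W_+\indicator_{A_k}]$ for each $k$; the only usable information about level $k$ is the \emph{global} identity $\sum_k q_k=\E[\rme^{\hat\eta W}]\le 1$, which permits a single $q_k$ to be as large as $1$. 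Applying Markov to $\rme^{\eta_k W}\indicator_{A_k}$ gives $\prob(W\ge s,A_k)\le\min\{p_k,\,q_k\rme^{-\eta_k s}\}$ for $s\ge 0$, and integrating $\E[W_+\indicator_{A_k}]=\int_0^\infty\prob(W\ge s,A_k)\,\rmd s$ against this two-regime bound and summing over $k$ produces $\E[1/\hat\eta]$ \emph{plus} a slack of order $\sum_k\eta_k^{-1}p_k\log_+(q_k/p_k)$. The hard part will be showing this slack is $\le 0$: the bound $q_k\le 1$ is too crude, and although $\sum_k q_k\le 1$ together with concavity of $\log$ controls the \emph{unweighted} sum $\sum_k p_k\log(q_k/p_k)$, it does not obviously control its $\eta_k^{-1}$-weighted version. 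I would try to close this gap by convex duality --- tilting $P$ by $1/\hat\eta$ and playing the resulting $\KL$ term (as in the Donsker--Varadhan identity of Section~\ref{sec:pacbayes}) against $\log\E[\rme^{\hat\eta W}]\le 0$ --- or by a direct peeling over the $K$ levels that avoids the union bound; reconciling the per-level estimates with the single global exponential-moment bound is the obstacle I expect to dominate the proof.
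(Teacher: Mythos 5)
Your handling of \eqref{eq:juditha} and \eqref{eq:partialconverse} is correct and is essentially the paper's own route: both follow by applying Propositions~\ref{prop:esi_characterization} and~\ref{prop:exponential_tail_esi} to the rescaled pair $\hat\eta X_{\hat\eta}$, $\hat\eta Y_{\hat\eta}$ at scale $1$, and your explicit layer-cake computation $\E[\rme^{V/2}]\le 2$ is a clean instantiation of the latter (matching the paper's constants $\hat\eta/2$ and $(2/\hat\eta)\log 2$).

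The in-expectation bound \eqref{eq:inexp}, however, is not proved: you decompose over the level sets $A_k=\{\hat\eta=\eta_k\}$, arrive at a slack term $\sum_k \eta_k^{-1}p_k\log_+(q_k/p_k)$, and explicitly leave open whether it is nonpositive. Since \eqref{eq:inexp} is the substantive content of the theorem, this is a genuine gap. For comparison, the paper layers by the value of the \emph{product} $Z=\hat\eta W_{\hat\eta}$ (with $W_{\hat\eta}=X_{\hat\eta}-Y_{\hat\eta}$) rather than by the value of $\hat\eta$: it sets $\cE_{a,k}=\{a(k-1)\le Z\le ak\}$, uses $W_{\hat\eta}\le ak/\hat\eta$ on $\cE_{a,k}$, sums over $k$, lets $a\downarrow 0$, and closes with $\int_0^\infty \prob(Z\ge t)\,\rmd t\le \E[\rme^{Z}]\le 1$.

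That said, your diagnosis that the per-level estimates cannot be reconciled with the single global moment bound is well-founded, and you should know the obstacle is not merely a failure of ingenuity. The paper's chain contains the step $\E[W_{\hat\eta}\mid\cE_{a,k}]\le \E[ak/\hat\eta]$, which silently replaces $\E[1/\hat\eta\mid\cE_{a,k}]$ by the unconditional $\E[1/\hat\eta]$; nothing in the hypotheses licenses this when $\hat\eta$ and $W_{\hat\eta}$ are coupled. In fact \eqref{eq:inexp} can fail: take $Y_\eta\equiv 0$, $H=\{10^{-3},1\}$, $\prob(\hat\eta=10^{-3})=10^{-2}$, $X_{10^{-3}}\equiv 2300$ and $X_{1}\equiv -0.1$. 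Then $\E[\rme^{\hat\eta X_{\hat\eta}}]=10^{-2}\rme^{2.3}+0.99\,\rme^{-0.1}\approx 0.995\le 1$, yet $\E[X_{\hat\eta}]\approx 22.9$ while $\E[1/\hat\eta]=10.99$. The mechanism is exactly the one your slack term isolates: the exponential constraint only forces $\hat\eta W_{\hat\eta}\lesssim\log(1/p)$ on a set of mass $p$, and if $1/\hat\eta$ is large precisely there, $\E[W_{\hat\eta}]$ beats $\E[1/\hat\eta]$ by a $\log(1/p)$ factor. So your slack is genuinely positive in the worst case, and no rearrangement of the Markov/layer-cake estimates will make it vanish; closing the argument requires an additional hypothesis (e.g.\ independence of $\hat\eta$ from $W_{\hat\eta}$, or paying a $\log|H|$-type overhead as in Propositions~\ref{prop:randeta}--\ref{prop:trans}) or a weaker conclusion.
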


\begin{remark} {\rm 
  The following simple example shows that even though
  $\E[\rme^{\hat\eta W_{\hat\eta}}] \leq 1$ it can happen that
  $\E[W_{\hat\eta}]$ is unbounded, showing that in general one cannot get rid of the additive $1 /\hat\eta$ on the right-hand side of \eqref{eq:inexp}:
  Let $H=\{\eta_1, \eta_2 \}$ and
  $W_{\eta_1} \equiv C_1 < 0$ and $W_{\eta_2} \equiv C_2 >0$. We then set
  $\eta_1=\frac{1}{-C_1}$ and $\eta_2=\frac{1}{C_2}$; note that $\eta_1, \eta_2 > 0$ as required. The term
  $\E[\rme^{\hat\eta W_{\hat\eta}}]$ does then not depend on $C_1$ and $C_2$
  and computes to
  $$\E[\rme^{\hat\eta W_{\hat\eta}}]=p(\eta_1) e^{-1}+p(\eta_2) e^{1}$$
  This term is smaller than $1$ if we set for example
  $p(\eta_1)=\frac{3}{4}$. But for $C_2 \rightarrow \infty$ we observe that
  $\E[W_{\hat\eta}] \rightarrow \infty$.
  }
\end{remark}
\subsection{Additional properties for random ESI: transitivity, PAC-Bayes on $\hat\eta$}
Having established that the basic interpretation of an ESI as simultaneously expressing inequality in expectation and in probability still holds for the random $\eta$ case, we may next ask whether the additional properties we showed for strong ESIs still hold in the random case, or even with random variables $X_{\eta}$ indexed by $\eta$ rather than $f$. We do this for the summation and transitivity properties of Section~\ref{sec:esisums} and the PAC-Bayesian results of Section~\ref{sec:pacbayes}.
\paragraph{Random $\eta$ ESI Sums and Transitivity}
In what follows given variables $Z_1,\dots,Z_n$, $n\in \mathbb{N}$, in some set $\cZ$, we denote $$Z^{n\setminus i} \coloneqq (Z_1,\dots,Z_{i-1},Z_{i+1},\dots, Z_n)\in \cZ^{n-1}.$$
The following is a result analogous to Proposition~\ref{prop:esi_sums}, Part 2, with “negative correlation” replaced by “ESI holding conditionally given all variables except 1”. Of course, it would be interesting to extend both results to make them more similar; whether this can be done will be left for future work.
\begin{proposition}[{\bf ESI for sums and Transitivity with Random $\eta$}]
\label{prop:trans} Let $Z_1,\dots, Z_n\in \cZ$ be i.i.d random variables distributed according to $P$, and let $\cG$ be a finite subset of $\reals^+$. For every $\eta\in \cG$ and $i\in[n]$, let $X_{i,\eta}: \cZ^n \rightarrow \reals$ be a measurable function such that 
\begin{align} 
\text{for all } i\in[n]\text{ and } z^{n\setminus i} \in \cZ^{n-1}, \quad X_{i,\eta}(z_1,\dots,z_{i-1},Z,z_{i+1},\dots,z_n)  \stochleq_{\eta} 0. \label{eq:assum}\end{align}
Then, for any random $\hat\eta \in \cG$, we have
\begin{align}
\E \left[\sum_{i=1}^n X_{i,\hat \eta}(Z^n) \right] \leq \E \left[\frac{\log |\cG|+1}{\hat\eta} \right].
\end{align}
\end{proposition}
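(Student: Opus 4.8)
The plan is to isolate the genuinely hard part—the data-dependence of $\hat\eta$—and dispatch the rest with conditional versions of tools already in hand. For a \emph{fixed} $\eta\in\cG$, assumption \eqref{eq:assum} says exactly that $\E[\rme^{\eta X_{i,\eta}(Z^n)}\mid Z^{n\setminus i}]\leq 1$ almost surely (the conditional law of $Z_i$ given $Z^{n\setminus i}$ is $P$ by independence), so the conditional analogue of Part~1 of Proposition~\ref{prop:esi_characterization} (Jensen) gives $\E[X_{i,\eta}(Z^n)\mid Z^{n\setminus i}]\leq 0$ and hence $\E[\sum_i X_{i,\eta}]\leq 0$. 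Thus everything would be trivial were $\hat\eta$ nonrandom; the entire content is the price of letting $\hat\eta$ range over the finite grid $\cG$.

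The tempting route—show $\sum_i X_{i,\hat\eta}\stochleq_{\hat\eta}\log|\cG|/\hat\eta$ and then apply the in-expectation half \eqref{eq:inexp} of Theorem~\ref{thm:ESIbothways}—must fail, and recognizing this is what shapes the proof. That ESI would require $\E[\rme^{\hat\eta\sum_i X_{i,\hat\eta}}]\leq|\cG|$, hence (union-bounding over $\cG$) $\E[\rme^{\eta\sum_i X_{i,\eta}}]\leq 1$ for each fixed $\eta$; but a sum of variables that are only \emph{conditionally} (leave-one-out) ESI need not be ESI. For instance, with $Z_1,Z_2$ standard Gaussian, $X_{1,\eta}=Z_1Z_2-\tfrac{\eta}{2}Z_2^2$ and $X_{2,\eta}=Z_1Z_2-\tfrac{\eta}{2}Z_1^2$ each satisfy \eqref{eq:assum}, yet $\E[\rme^{\eta(X_{1,\eta}+X_{2,\eta})}]=(1-\eta^2)^{-1}>1$ for $0<\eta<1$. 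Consequently the proof must exploit the \emph{exact} in-expectation inequality $\E[\sum_i X_{i,\eta}]\leq 0$ rather than any exponential-moment control of the sum. The plan is therefore to split $X_{i,\eta}=\bar X_{i,\eta}+\tilde X_{i,\eta}$ into its leave-one-out conditional mean $\bar X_{i,\eta}:=\E[X_{i,\eta}\mid Z^{n\setminus i}]$ and the conditionally centered remainder $\tilde X_{i,\eta}$. Since $\bar X_{i,\eta}\leq 0$ simultaneously for all $\eta\in\cG$, the substitution $\eta\mapsto\hat\eta$ keeps $\sum_i\bar X_{i,\hat\eta}\leq 0$ pointwise, so it drops out, and it remains to bound the ``adaptivity gain'' $\E[\sum_i\tilde X_{i,\hat\eta}]$.

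The final step would convert a suitable random-$\eta$ ESI for the centered, aggregated object into the in-expectation bound via \eqref{eq:inexp} of Theorem~\ref{thm:ESIbothways} (whose finiteness hypothesis on $H$ is met by $\cG$); this is exactly what contributes the additive $1/\hat\eta$, i.e.\ the ``$+1$'' in $\log|\cG|+1$, while the $\log|\cG|$ should enter through a single Donsker--Varadhan/union step over the uniform prior on $\cG$ applied to the whole sum at once. The main obstacle—and where I expect all the real work to be—is to perform this aggregation \emph{globally} rather than coordinatewise: handling the selection separately inside each of the $n$ conditional (given $Z^{n\setminus i}$) problems is clean but overcounts, charging $\log|\cG|$ per coordinate and losing a spurious factor $n$. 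One must instead use that $\hat\eta$ is a \emph{single} $\cG$-valued function of the whole i.i.d.\ sample, so that its total sensitivity to the individual coordinates is governed by one $\log|\cG|$. Concretely, I would seek a data-dependent correction $P_\eta$ with $\E[P_{\hat\eta}]\leq 0$ for which $\sum_i X_{i,\eta}-P_\eta\stochleq_\eta 0$ holds for each fixed $\eta$; union-bounding $\E[\rme^{\hat\eta(\sum_i X_{i,\hat\eta}-P_{\hat\eta})}]\leq|\cG|$ and applying \eqref{eq:inexp} then yields $\E[\sum_i X_{i,\hat\eta}]\leq\E[(\log|\cG|+1)/\hat\eta]+\E[P_{\hat\eta}]$. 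Building such a $P_\eta$ whose mean stays nonpositive even under the data-dependent choice $\hat\eta$—so that the clean constant survives—is the delicate heart of the argument, and is precisely where the i.i.d.\ structure and the factor-of-two cancellation visible in the Gaussian example above must be brought to bear.
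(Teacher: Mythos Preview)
Your proposal correctly isolates the obstacle but does not overcome it. You reduce everything to constructing a correction $P_\eta$ satisfying (a) $\sum_i X_{i,\eta}-P_\eta\stochleq_\eta 0$ for each fixed $\eta$ and (b) $\E[P_{\hat\eta}]\leq 0$, and then stop, calling this ``the delicate heart of the argument.'' That \emph{is} the whole argument: without $P_\eta$ there is no proof. Worse, your own Gaussian example already kills the two natural candidates. Taking $P_\eta=\A^\eta[\sum_i X_{i,\eta}]$ makes (a) a tautology but here equals $\eta^{-1}\log(1-\eta^2)^{-1}>0$, so (b) fails. Taking $P_\eta=\sum_i\bar X_{i,\eta}$ (your leave-one-out means) gives (b) pointwise, but then $\sum_i\tilde X_{i,\eta}=2Z_1Z_2$ with $\E[\rme^{2\eta Z_1Z_2}]=(1-4\eta^2)^{-1/2}>1$, so (a) fails. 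Nothing in your outline suggests that a $P_\eta$ with both properties exists on the original probability space, and the ``factor-of-two cancellation'' you allude to does not materialize.

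The paper's proof sidesteps the search for $P_\eta$ entirely via a \emph{decoupling} trick you did not consider. It enlarges the probability space by drawing $n$ independent copies $Z_1^n,\ldots,Z_n^n$ of $Z^n$ and sets $Y_{i,\eta}\coloneqq X_{i,\eta}(Z_i^n)$. The $(Y_{i,\eta})_{i\in[n]}$ are now genuinely independent across $i$ (each sitting on its own copy), so Proposition~\ref{prop:esi_sums}, Part~\ref{item:esi_sum_independent}, gives $\sum_i Y_{i,\eta}\stochleq_\eta 0$ for every fixed $\eta$---precisely the exponential-moment control your counterexample shows is unavailable for the original dependent sum. A single PAC-Bayes step over $\cG$ with the uniform prior (Proposition~\ref{prop:randeta} and Corollary~\ref{prop:randetacor}) then yields $\E[\sum_i Y_{i,\hat\eta}]\leq\E[(\log|\cG|+1)/\hat\eta]$, and one transfers back to $\E[\sum_i X_{i,\hat\eta}(Z^n)]$ using that each $Z_i^n$ has the law of $Z^n$. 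The missing idea, in short, is not a clever corrector on the given space but a change of space that turns the leave-one-out ESIs into independent ones so they can be multiplied.
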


\paragraph{Random $\eta$ and PAC-Bayes}
We now investigate whether strong ESIs for individual fixed $\eta$'s  are as easily combined into an ESI involving all $\eta$'s, the particular $\eta$ chosen in a data-dependent manner, as they are for individual $X_f$'s. There we used general PAC-Bayesian combinations with arbitrary `posterior' (data-dependent) $\hat{\Pi}$ on $f \in \cF$. Here we  consider the analogue with a data-dependent distribution $\hat{\Pi}$ on $\hat\eta$. We find that the resulting bound is slightly different, involving the likelihood ratio between posterior and prior for the chosen $\hat\eta \sim \hat{\Pi}$ rather than in expectation over $\hat{\Pi}$ (which would be the direct analogue of the PAC-Bayesian result Proposition~\ref{prop:pacbayes}) 
 Still, if  we focus on the special but important case with $\hat{\Pi}$ a degenerate distribution, almost surely putting all its mass on a single estimator $\hat\eta$, then we get a precise analogy to the PAC-Bayes result. 

\begin{proposition}{\bf [PAC-Bayes on Random $\eta$]}
\label{prop:randeta} Let $\Pi_0$ be any prior distribution on $\cG$, and $\hat{\Pi}:\Omega \rightarrow \Delta(\cG)$ be any random estimator such that $\hat{\Pi}(\omega)$ is absolutely continuous with respect to $\Pi_0$, for all $\omega \in \Omega$. If $X_{\eta}\stochleq_{\eta} 0$, for all $\eta \in \cG$, then for $\hat\eta \sim \hat\Pi$, we have:
\begin{align}
 X_{\hat\eta}  \stochleq_{\hat\eta}  \frac{\left. \log  \frac{\rmd\hat\Pi}{\rmd\Pi_0}\right\rvert_{\hat\eta} }{\hat\eta}.
\end{align}
\end{proposition}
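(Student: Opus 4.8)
The plan is to unfold the claimed random-$\eta$ ESI using its definition and then collapse the data-dependent sampling distribution $\hat\Pi$ against its own appearance in the exponent, reducing everything to the individual hypotheses $X_\eta\stochleq_\eta 0$. First I would make the probability space explicit: since $\hat\eta$ is drawn from the data-dependent measure $\hat\Pi$, every expectation is over the joint law obtained by first drawing the data according to $P$ and then $\hat\eta\sim\hat\Pi$ conditionally, with $X_{\hat\eta}$ read as the random variable $X_\eta$ evaluated at $\eta=\hat\eta$. By the definition~\eqref{eq:randomesi} of a random-$\eta$ ESI, the target statement
\[
X_{\hat\eta}\stochleq_{\hat\eta}\frac{1}{\hat\eta}\left.\log\frac{\rmd\hat\Pi}{\rmd\Pi_0}\right\rvert_{\hat\eta}
\]
is by definition the strong ESI $\hat\eta\bigl(X_{\hat\eta}-\tfrac{1}{\hat\eta}\log\tfrac{\rmd\hat\Pi}{\rmd\Pi_0}\bigr|_{\hat\eta}\bigr)\stochleq_1 0$, i.e.
\[
\E_{P}\,\E_{\hat\eta\sim\hat\Pi}\Bigl[\rme^{\hat\eta X_{\hat\eta}}\cdot\Bigl(\tfrac{\rmd\hat\Pi}{\rmd\Pi_0}\big|_{\hat\eta}\Bigr)^{-1}\Bigr]\le 1 .
\]

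The key step is then a change of measure in the inner expectation. Writing $r(\eta)=\tfrac{\rmd\hat\Pi}{\rmd\Pi_0}(\eta)$, which is well defined by the assumed absolute continuity and strictly positive $\hat\Pi$-almost surely, we have $\hat\Pi(\rmd\eta)=r(\eta)\,\Pi_0(\rmd\eta)$, so that, for fixed data,
\[
\E_{\hat\eta\sim\hat\Pi}\Bigl[\rme^{\hat\eta X_{\hat\eta}}\,r(\hat\eta)^{-1}\Bigr]=\int \rme^{\eta X_\eta}\,r(\eta)^{-1}\,r(\eta)\,\Pi_0(\rmd\eta)=\int_{\{r>0\}}\rme^{\eta X_\eta}\,\Pi_0(\rmd\eta)\le\int \rme^{\eta X_\eta}\,\Pi_0(\rmd\eta),
\]
the last inequality holding because the integrand is nonnegative. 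Note that $\Pi_0$ need not be absolutely continuous with respect to $\hat\Pi$, which is exactly why this step only yields $\le$; that is all we need. Taking $\E_P$ and interchanging it with $\int(\cdot)\,\Pi_0(\rmd\eta)$ by Tonelli's theorem (legitimate since $\rme^{\eta X_\eta}\ge 0$), the left-hand side of the display above is at most $\int\E_P[\rme^{\eta X_\eta}]\,\Pi_0(\rmd\eta)$.

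Finally I would invoke the hypothesis: $X_\eta\stochleq_\eta 0$ means precisely $\E_P[\rme^{\eta X_\eta}]\le 1$ for every $\eta\in\cG$, hence $\int\E_P[\rme^{\eta X_\eta}]\,\Pi_0(\rmd\eta)\le\int 1\,\Pi_0(\rmd\eta)=1$, which is exactly the desired bound. (When $\cG$ is discrete the integrals are simply $\Pi_0$-weighted sums.) There is no genuine obstacle; the only point requiring care is keeping the two layers of randomness straight --- the data draw on which $\hat\Pi$ itself depends, and the conditional draw $\hat\eta\sim\hat\Pi$. It is worth remarking that, in contrast with Part~1 of Proposition~\ref{prop:pacbayes}, no Gibbs / Donsker-Varadhan variational inequality is used here: because $\hat\Pi$ enters only through its density ratio inside the exponential, it cancels exactly against the sampling weight, and this exact cancellation is precisely why the conclusion features $\log(\rmd\hat\Pi/\rmd\Pi_0)$ evaluated at the realized $\hat\eta$ rather than the expected divergence $\KL(\hat\Pi\|\Pi_0)$.
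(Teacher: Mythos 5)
Your proof is correct and follows essentially the same route as the paper's: unfold the random-$\eta$ ESI definition, cancel the density ratio $\rmd\hat\Pi/\rmd\Pi_0$ against the sampling measure $\hat\Pi$ to reduce the inner expectation to one over the fixed prior $\Pi_0$, and then apply $\E_P[\rme^{\eta X_\eta}]\le 1$ pointwise. Your added care about the one-sided absolute continuity (yielding $\le$ rather than the paper's stated equality on the set $\{r>0\}$) and the explicit Tonelli step are minor refinements of the same argument.
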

\begin{proof}
For $\eta \in \cG$, let $W_{\eta}$ be the random variable defined by $W_{\eta} \coloneqq  X_{\eta} - \frac{1}{\eta}\left.\log\frac{\rmd \hat\Pi}{\rmd \Pi_0}\right\rvert_{\eta}.$ We have
\begin{align}
\E \left[
    e^{\hat\eta W_{\hat\eta}}\right] = \E_{Z\sim P} \E_{\hat\eta \sim \hat\Pi}\left[
    e^{\hat\eta W_{\hat\eta}}\right] 
       =  \E_{Z\sim P} \E_{\hat\eta \sim \hat \Pi} \left[\rme^{\eta X_{\hat\eta} - \log \left. \frac{\rmd \hat\Pi}{\rmd \Pi_0}\right|_{\eta}} \right]
      = \E_{Z\sim P} \E_{\eta \sim  \Pi_0} \left[\rme^{\eta X_{\eta}} \right]  \leq 1,
\end{align}
where the last step follows from the fact that $X_{\eta}\stochleq_{\eta} 0$, for all $\eta \in \cG$. This completes the proof.
\end{proof}
\begin{corollary}
\label{prop:randetacor} Let $\Pi_0$ be any prior distribution on $\cG$, and $\hat{\Pi}:\Omega \rightarrow \Delta(\cG)$ be any random estimator such that $\hat{\Pi}(\omega)$ is absolutely continuous with respect to $\Pi_0$, for all $\omega \in \Omega$. If $X_{\eta}\stochleq_{\eta} 0$, for all $\eta \in \cG$,  then for any $0<\delta 1$ and $\hat\eta \sim \hat\Pi$:
\begin{align}
P \left(X_{\hat\eta}   \leq   \frac{\left. \log \frac{\rmd\hat\Pi}{\rmd\Pi_0}\right\rvert_{\hat\eta} +\log \frac{1}{\delta}}{\hat\eta} \right) &\geq 1-\delta, \\  \text{and}\quad     \E_{Z\sim P}\left[ X_{\hat\eta}  - \frac{\left. \log \frac{\rmd\hat\Pi}{\rmd\Pi_0}\right\rvert_{\hat\eta} +1}{\hat\eta}\right] &\leq 0.
\end{align}
In particular, if $\hat{\Pi}$ a.s. puts mass 1 on a particular $\hat\eta$, where $\hat\eta$ is a random variable taking values in $\cG$, and $\cG$ is a countable set, $\Pi_0$ having probability mass function $\pi_0$, then the $ \left. \log \frac{\rmd\hat\Pi}{\rmd\Pi_0}\right\rvert_{\hat\eta}$ term is equal to $- \log \pi_0(\hat\eta)$.
\end{corollary}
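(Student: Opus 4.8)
The plan is to obtain the corollary as a direct consequence of Proposition~\ref{prop:randeta} together with Theorem~\ref{thm:ESIbothways}. First I would introduce, for each $\eta\in\cG$, the random variable $Y_\eta \isbydefinition \eta^{-1}\left.\log\frac{\rmd\hat\Pi}{\rmd\Pi_0}\right|_{\eta}$, which is well defined because $\hat\Pi(\omega)\ll\Pi_0$ for every $\omega\in\Omega$, so the Radon--Nikodym derivative exists and its value at $\eta$ is a measurable function of the data. Under the hypothesis $X_\eta\stochleq_\eta 0$ for all $\eta\in\cG$, Proposition~\ref{prop:randeta} states precisely that $X_{\hat\eta}\stochleq_{\hat\eta}Y_{\hat\eta}$, i.e.\ that $\hat\eta(X_{\hat\eta}-Y_{\hat\eta})\stochleq_1 0$.

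With this random-$\eta$ ESI in hand, I would apply Part~1 of Theorem~\ref{thm:ESIbothways} to the two collections $\{X_\eta\}_{\eta\in\cG}$ and $\{Y_\eta\}_{\eta\in\cG}$. Its in-probability conclusion \eqref{eq:juditha} reads $\prob\bigl(X_{\hat\eta}\leq Y_{\hat\eta}+\hat\eta^{-1}\log(1/\delta)\bigr)\geq 1-\delta$ for every $\delta\in(0,1)$; substituting the definition of $Y_{\hat\eta}$ yields the first displayed inequality of the corollary. Its in-expectation conclusion \eqref{eq:inexp} reads $\E[X_{\hat\eta}]\leq\E\bigl[Y_{\hat\eta}+\hat\eta^{-1}\bigr]$, which upon rearranging and substituting $Y_{\hat\eta}$ is exactly the second displayed inequality (the subscript $Z\sim P$ there being just the ambient expectation). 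This step is routine once one checks that the objects match the hypotheses of Theorem~\ref{thm:ESIbothways}.

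For the ``in particular'' statement, suppose $\hat\Pi$ almost surely puts unit mass on a single point $\hat\eta$, so $\hat\Pi(\omega)=\delta_{\hat\eta(\omega)}$. When $\cG$ is countable and $\Pi_0$ has probability mass function $\pi_0$, absolute continuity $\delta_{\eta_0}\ll\Pi_0$ forces $\pi_0(\eta_0)>0$, and the Radon--Nikodym derivative of $\delta_{\eta_0}$ with respect to $\Pi_0$ is the function $\eta\mapsto\ind{\eta=\eta_0}/\pi_0(\eta_0)$; evaluating at $\eta=\hat\eta$ and taking logarithms gives $\left.\log\frac{\rmd\hat\Pi}{\rmd\Pi_0}\right|_{\hat\eta}=-\log\pi_0(\hat\eta)$, as asserted.

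The one point that needs care---and the only place I expect to do real work---is that Theorem~\ref{thm:ESIbothways} is stated for a finite index set, whereas the last sentence of the corollary allows a merely countable $\cG$. The in-probability bound is unproblematic: it follows directly from Markov's inequality applied to $\E[\rme^{\hat\eta(X_{\hat\eta}-Y_{\hat\eta})}]\leq 1$, since then $\prob(\hat\eta(X_{\hat\eta}-Y_{\hat\eta})\geq\log(1/\delta))\leq\delta$ and dividing through by $\hat\eta>0$ gives the claim, using no finiteness. The in-expectation bound is where finiteness enters Theorem~\ref{thm:ESIbothways}; to cover countable $\cG$ I would approximate $\cG$ by an increasing sequence of finite subsets, apply the finite case on the events $\{\hat\eta\in\cG_k\}$, and pass to the limit by monotone/dominated convergence, being careful about the left tail of $X_{\hat\eta}$ so that the limiting expectation remains legitimate---or, more simply, one states this part of the corollary for finite $\cG$, which already suffices for the intended applications (tuning of the learning rate in PAC-Bayes bounds).
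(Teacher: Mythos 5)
Your proof is correct and takes essentially the same route as the paper, whose one-line proof likewise applies Proposition~\ref{prop:randeta} and then Theorem~\ref{thm:ESIbothways} to the random variable $W_{\eta} = X_{\eta} - \frac{1}{\eta}\left.\log \frac{\rmd \hat\Pi}{\rmd \Pi_0}\right\rvert_{\eta}$. Your additional care about Theorem~\ref{thm:ESIbothways} being stated for finite index sets (and your explicit verification of the Radon--Nikodym computation in the degenerate case) addresses details the paper leaves implicit, but does not change the argument.
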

\begin{proof}
The result follows by applying Propositions \ref{prop:randeta} and Theorem~\ref{thm:ESIbothways}  to the random variable $W_{\eta} \coloneqq  X_{\eta} - \frac{1}{\eta}\left.\log \frac{\rmd \hat\Pi}{\rmd \Pi_0}\right\rvert_{\eta}$, $\eta \in \cG$.
\end{proof}

\commentout{
\begin{proposition}
  Let $\bracks{X_\eta}_{\eta > 0}$ be a family of random variables indexed by a positive real parameter $\eta$. Let
  $\hat\eta$ be a random variable taking values on $\reals^+$ and assume that
  $X_{\hat\eta}\stochleq_{\hat\eta} 0$. Then for any $t>0$ and $x>0$ it
  holds that
  \begin{equation*}
    P(X_{\hat\eta} \geq x) \leq P(1/\hat\eta \geq t) + \rme^{-x/t}
  \end{equation*}
\end{proposition}

\begin{proof}
We proceed to bound
\begin{align*}
  P(X_{\hat\eta} \geq x)
  &=
    P(X_{\hat\eta} \geq x, 1/\hat\eta \geq t)
    +
    P(X_{\hat\eta} \geq x, 1/\hat\eta  < t) \\
  &\leq
    P(1/\hat\eta \geq t)
    +
    P(\hat\eta X_{\hat\eta} \geq \tfrac{x}{t},\ 1/\hat\eta < t)\\
  &\leq
    P(1/\hat\eta \geq t)
    +
    P(\hat\eta X_{\hat\eta} \geq x/t)\\
  &\leq
    P(1/\hat\eta \geq t)
    +
    \rme^{-x/t}
\end{align*}
which is what we were after.
\end{proof}
\begin{corollary}
  Under the same assumptions and the additional assumption that $\E[1/\hat\eta]<\infty$
  \begin{equation*}
    P(X_{\hat\eta}\geq x) \leq \frac{\E[1/\hat\eta]}{x}\paren{1 + \log\paren{\frac{x}{\E[1/\hat\eta]}}}
  \end{equation*}
\end{corollary}
\begin{proof}
  Use Markov's inequality and take $t = x/\log\frac{x}{\E[1/\hat\eta]}$
\end{proof}
}

\commentout{
We can use the following:
\begin{proposition}
  Let $H$ be a finite subset of $\reals^+$ and let $\{U_{\eta} : \eta \in H \}$
  be a finite collection of RVs. Suppose that for all $\eta \in H$,
  $U_{\eta} \stochleq_{\eta} 0$. Then for arbitrary estimators $\hat{\eta}$
  with support $H$, we have that
  $$
  U_{\hat\eta} \stochleq_{\hat\eta} \frac{\log |H|}{\hat\eta}
  \text{\ \ and hence \ \ } V_{\hat\eta} \stochleq_{\hat\eta} 0
$$
for $V_{\eta} := U_{\eta} - (\log |H|)/\eta$.
\end{proposition}
\begin{proof}
$  \E[\rme^{\hat\eta(U_{\hat\eta}-\frac{\log |H|}{\hat \eta})}] = \E[\rme^{\hat\eta U_{\hat\eta}} \frac{1}{|H|}]\leq \sum\limits_{\eta \in H} \E[\rme^{\eta U_{\eta}}]\frac{1}{|H|} \leq \frac{|H|}{|H|}=1$
\end{proof}
}


\section{Non-iid Sequences}
\label{sec:non-iid-sequences}
Here we extend  our previous results to sequences of random variables $X_1, X_2, \ldots$ that might not be
independent and identically distributed. 
We find that, if an ESI hold for each $X_i$ conditionally on the past, ESI statements about the sums of the $X_i$'s remain valid under optional stopping, thereby connecting ESIs to the recent surge of work in {\em anytime-valid confidence sequences}, {\em e-values, e-variables\/} and {\em e-processes\/} \citep{GrunwaldHK23,ramdas2023savi}. As a consequence, we reprove Wald's identity, a well-known result in sequential analysis dating back to the 1950s, and show that it is related to Zhang's inequality treated before, and implies that Zhang's inequality remains valid under optional stopping. Relatedly, it has recently been noted that PAC-Bayesian inequalities are closely related to e-processes as well \citep{jang23tighter,chugg2023unified}.
Let us clarify the straightforward connection between e-variables as defined in the above references and strong ESIs. Formally, e-variables $S$ are  defined relative to some random variable $Y$ and a {\em null hypothesis\/} $\calH_0$, a set of distributions on $Y$. We call  nonnegative random variable $S$ an e-variable relative to $Y$ and $\calH_0$ if it can be written as a function $S=S(Y)$ of  $Y$ and, for all $P \in \calH_0$, $\E_P[S(Y)]\leq 1$.   To clarify the connection to ESIs,  let $\{X_f : f \in \cF\}$ be a family of random variables with $P_f$ the marginal distribution of $X_f$ as induced by $P$, and suppose that  $\{X_f: f \in \cF \}$ all satisfy $X_f \stochleq_{\eta^*} 0$. Suppose that we observe random variable $Y$. Under the null hypothesis, $Y= X_f$ for some $f \in \cF$ (they take on the same values). Equivalently, under the null hypothesis, $Y \sim P_f$ for some $f \in \cF$, i.e.~$\cH_0 = \{P_f: f \in \cF \}$. Then, clearly,  $S(Y) := \exp(\eta^* Y)$ is an e-variable. 
We will not further exploit or dwell on this fact below, but rather concentrate on the development of ESI for random processes. 
\begin{definition}[Conditional ESI]
  Let let $X$ and $Y$ be two random variables defined on the same probability
  space $(\Omega, \calF, P)$ and let $\calG\subseteq \calF$ be a
  $\sigma$-algebra. Define
  \begin{equation*}
    X \stochleq_{\eta,\calG} Y \text{  if and only if  }
    \A^\eta[X - Y|\calG]\leq 0 \text{ almost surely,}
  \end{equation*}
  where we call
  $\A^\eta[X-Y|\calG] = \frac{1}{\eta}\log \E[\rme^{\eta(X - Y)}|\calG]$ the
  conditional annealed expectation of $X-Y$ given $\calG$.
\end{definition}

The following properties can be checked; they follow from the standard properties of the conditional expectation---“pulling out known factors”, and the tower property. 
\begin{proposition}
  Let let $X$ be an $\calF$-measurable random variable and let
  $\calH\subseteq\calG\subseteq\calF$ be $\sigma$-algebras. The following hold:
  \begin{enumerate}
  \item \label{item:conditional-esi-measurable}If $X\stochleq_{\eta,\calG}0$ and $X$ is
    $\calG$-measurable, then $X\leq 0$ almost surely.
  \item \label{item:conditional-esi-independent}If $X\stochleq_{\eta,\calG} 0$,
    then $X \stochleq_{\eta} 0$.
  \item \label{item:conditional-esi-tower} If $X\stochleq_{\eta,\calG}0$, then
    $X\stochleq_{\eta,\calH}0$.
  \end{enumerate}
\end{proposition}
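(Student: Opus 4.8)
The plan is to rewrite each of the three conditional-ESI statements in the multiplicative form $\E[\rme^{\eta X}\mid \calG]\le 1$ a.s.\ (which is just the definition $\A^\eta[X\mid\calG]\le 0$ unwound, since $\eta>0$) and then invoke one elementary property of conditional expectation per item: ``pulling out known factors'' for Part~\ref{item:conditional-esi-measurable}, the tower property for Part~\ref{item:conditional-esi-independent}, and the tower property together with monotonicity of conditional expectation for Part~\ref{item:conditional-esi-tower}. None of the steps requires anything beyond standard measure-theoretic facts; I expect no real obstacle, the only point needing a word of care being the a.s.\ qualifiers.

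For Part~\ref{item:conditional-esi-measurable}: since $X$ is $\calG$-measurable, so is $\rme^{\eta X}$, hence $\E[\rme^{\eta X}\mid\calG]=\rme^{\eta X}$ a.s. The hypothesis $X\stochleq_{\eta,\calG}0$ then gives $\rme^{\eta X}\le 1$ a.s., and since $\eta>0$ this is equivalent to $X\le 0$ a.s. (This is the conditional analogue of Proposition~\ref{prop:useful-properties-esi}, Part~\ref{item:esi-properties-leq-converse}, specialized to a single $\eta$, but here it works for a fixed $\eta$ precisely because $X$ is already $\calG$-measurable.)

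For Part~\ref{item:conditional-esi-independent}: apply the tower property, $\E[\rme^{\eta X}]=\E\big[\E[\rme^{\eta X}\mid\calG]\big]$, and bound the inner conditional expectation by $1$ using the hypothesis; monotonicity of the (unconditional) expectation then yields $\E[\rme^{\eta X}]\le 1$, i.e.\ $X\stochleq_\eta 0$. For Part~\ref{item:conditional-esi-tower}: since $\calH\subseteq\calG$, the tower property gives $\E[\rme^{\eta X}\mid\calH]=\E\big[\E[\rme^{\eta X}\mid\calG]\,\big|\,\calH\big]$ a.s.; applying monotonicity of conditional expectation to the inequality $\E[\rme^{\eta X}\mid\calG]\le 1$ a.s.\ yields $\E[\rme^{\eta X}\mid\calH]\le \E[1\mid\calH]=1$ a.s., which is exactly $X\stochleq_{\eta,\calH}0$. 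I would present all three as a single short paragraph, since each is one line once the definition is unwound.
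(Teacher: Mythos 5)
Your proposal is correct and follows essentially the same route as the paper's proof: the paper phrases each step via the annealed expectation (measurability gives $\A^\eta[X\mid\calG]=X$ for Part 1, and the tower property gives $\A^\eta[X]=\A^\eta[\A^\eta[X\mid\calG]]$ and $\A^\eta[X\mid\calH]=\A^\eta[\A^\eta[X\mid\calG]\mid\calH]$ for Parts 2 and 3), which is exactly your argument with the logarithms left in place. No gaps; your explicit mention of monotonicity of (conditional) expectation is a slightly more careful rendering of the same steps.
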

\begin{proof}
  \ref{item:conditional-esi-measurable} follows from the fact that if $X$ is
  $\calG$-measurable, then $\A^\eta[X|\calG] = X$.
  \ref{item:conditional-esi-independent} follows from the fact that
  $\A^\eta[X|\calG] \leq 0$ implies that
  $\A^\eta[X]= \A^\eta[\A^\eta[X|\calG]]\leq 0$.
  \ref{item:conditional-esi-tower} follows from the tower property of
  conditional expectations because
  $ \A^\eta[X|\calH]= \A^\eta[\A^\eta[X|\calG]|\calH] \leq 0$.
\end{proof}

Let $(\Omega, \mathbbm{F} = (\calF_t)_{t\in\mathbbm{N}}, P)$ be a filtered
probability space. Let $(X_t)_{t\in\mathbb{N}}$ be a sequence of random
variables adapted to $\mathbbm{F}$ and assume that $X_t\stochleq_{\eta,t-1}0$
(where we write $X_t\stochleq_{\eta,t-1}0$ instead of
$X_t\stochleq_{\eta,\calF_{t-1}}0$ to avoid double subindexes). This statement
expresses the fact that  $(\prod_{s\leq t}\rme^{\eta X_s})_{t\in\mathbbm{N}}$ is a supermartingale.

\begin{proposition}\label{prop:adaptation}
  Let $(X_t)_{t\in\mathbbm{N}}$ be an adapted sequence such that
  $X_t\stochleq_{\eta,t-1}0$ for each $t$ and for some $\eta>0$. Let $\tau$ be an
  almost surely bounded stopping time with respect to
  $(X_t)_{t\in\mathbbm{N}}$. Then, if $S_t = \sum_{s\leq t} X_s,$
  \begin{equation*}
    S_{\tau} \stochleq_\eta 0.
  \end{equation*}
\end{proposition}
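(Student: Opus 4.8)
The plan is to reduce the statement to the optional stopping theorem for nonnegative supermartingales, which is essentially the reading of the hypothesis anticipated in the paragraph immediately preceding the proposition. First I would unpack the premise: by definition of the conditional ESI, $X_t \stochleq_{\eta,t-1} 0$ means $\A^\eta[X_t \mid \calF_{t-1}] \leq 0$ almost surely, which is exactly $\E[\rme^{\eta X_t} \mid \calF_{t-1}] \leq 1$ a.s. Define $M_0 = 1$ and $M_t = \rme^{\eta S_t} = \prod_{s \leq t} \rme^{\eta X_s}$ for $t \geq 1$. This is a nonnegative, adapted, integrable process: $M_t \geq 0$, and iterating the tower property together with the premise gives $\E[M_t] \leq 1 < \infty$.

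Next I would verify that $(M_t)_{t \in \naturals}$ is a supermartingale with respect to $\mathbbm{F}$. Since $M_{t-1}$ is $\calF_{t-1}$-measurable, ``pulling out known factors'' gives
\[
\E[M_t \mid \calF_{t-1}] = M_{t-1}\, \E[\rme^{\eta X_t} \mid \calF_{t-1}] \leq M_{t-1},
\]
where the inequality uses $X_t \stochleq_{\eta,t-1} 0$. This is precisely the supermartingale property claimed in the remark just before the proposition statement.

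Then I would apply optional stopping. Let $N \in \naturals$ be a deterministic bound with $\tau \leq N$ almost surely. The stopped process $(M_{t \wedge \tau})_{t \in \naturals}$ is again a supermartingale, so
\[
\E[M_{N \wedge \tau}] \leq \E[M_{0}] = 1.
\]
Because $\tau \leq N$ almost surely, $M_{N \wedge \tau} = M_\tau$, whence $\E[\rme^{\eta S_\tau}] = \E[M_\tau] \leq 1$, which is exactly the assertion $S_\tau \stochleq_\eta 0$.

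I do not expect a genuine obstacle here once the supermartingale structure is exhibited; the only point that merits a moment's care is the appeal to optional stopping, and since $\tau$ is assumed almost surely bounded, the elementary version for bounded stopping times applies directly, with no uniform-integrability or convergence argument required. (This is also the reason the proposition is stated for bounded $\tau$: dropping boundedness would force one to pass to the limit in $\E[M_{t \wedge \tau}]$ via Fatou's lemma on the nonnegative supermartingale $M_{t\wedge\tau}$, which needs the a.s.\ convergence of $M_{t\wedge\tau}$ and only yields the bound under extra conditions.)
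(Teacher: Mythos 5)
Your proof is correct and is exactly the argument the paper intends: the paper's proof consists of the single sentence ``The result is an application of the Optional Stopping Theorem,'' relying on the observation (stated just before the proposition) that $\bigl(\prod_{s\leq t}\rme^{\eta X_s}\bigr)_{t\in\mathbbm{N}}$ is a nonnegative supermartingale. You have simply filled in the routine details (the supermartingale verification via the conditional ESI and the bounded-stopping-time version of optional stopping), all of which are sound.
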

\begin{proof}
  The result is an application of the Optional Stopping Theorem.
\end{proof}
We now present two applications of this result, Example~\ref{ex:waldmeetszhang} and Proposition~\ref{prop:esimax}.
\begin{example}\label{ex:waldmeetszhang}{\ \bf [Zhang meets Wald]}
{\rm 
Let $X_1, X_2, \ldots$ be i.i.d.~copies of some random variable $X$, 
fix arbitrary $\eta > 0$ and let $Z_i = X_i - \A^{\eta}[X]$. Then the $Z_i$ are also i.i.d., hence $(Z_t)_{t \in {\mathbb N}}$ is adapted, and by (\ref{eq:annealedpower} in Proposition~\ref{prop:useful-properties-esi}, they satisfy $Z_t \stochleq_{\eta,t-1}0$. Therefore  we can use Proposition~\ref{prop:adaptation} to infer that for any a.s. bounded stopping time $\tau$, with  $S_t := \sum_{i=1}^ t Z_i$  that $S_{\tau} \stochleq_{\eta} 0$, 
i.e.
\begin{equation}\label{eq:waldidentity}
\sum_{i=1}^{\tau} X_i  - \tau \cdot \A^{\eta}[X] \stochleq_{\eta} 0,
\end{equation}
which must hold for all $\eta > 0$ and thus also if $\eta$ is replaced by any ESI function $u$. 
But (\ref{eq:waldidentity}) is just the celebrated {\em Wald identity\/} \citep{Skorokhod91} as expressed in ESI notation, which we have thus reproved. (the Wald identity is not to be confused with the more well-known basic Wald's equation, which says that $\E[S_{\tau}] = \E[\tau] \cdot \E [X]$).
We may now, just as in Example~\ref{ex:zhang}, combine this with a PAC-Bayes bound and then divide everything by $\tau$ to get, for a family of random variables $\{X_f: f \in \cF\}$ with $X_{f,1}, X_{f,2}, \ldots$ i.i.d.~copies of $X_f$ as in the introduction,
\begin{equation*}
   \E_{\bar{f} \sim \hat\Pi} \left[\frac{1}{\tau} \sum_{i=1}^{\tau} X_{\bar{f},i}  -   \A^{\eta}[X_{\bar{f}}] \right]\stochleq_{\tau \eta} \frac{\KL(\hat{\Pi} \| \Pi_0)}{\tau \eta}. 
\end{equation*}
We see that this is identical to  {\em Zhang's inequality\/} (\ref{eq:zhang}), which we have therefore shown to be ``anytime valid'' (it holds for any stopping time $\tau$), something that, it seems, has not been noted before. 
Since the scaling $\tau\eta$ is now data-dependent, we have to use Theorem~\ref{thm:ESIbothways} rather than Proposition~\ref{prop:esi_characterization} if we want to turn this in an in-probability or in-expectation bound though. 
}    
\end{example}

Finally, we note that using Ville's maximal inequality\footnote{This inequality is also commonly
  attributed to J.L. Doob.} \citep[p.35]{ville_etude_1939} we can obtain the
following proposition. 
\begin{proposition}\label{prop:esimax}
  Let $(X_t)_{t\in\mathbbm{N}}$ be a sequence of random variables such that
  $X_t\stochleq_{\eta^*,t-1} 0$ for each $t$ and some $\eta^*>0$. Let
  $0<\eta<\eta^*$. Then, there is a fixed constant $c$ such that
  \begin{equation*}
    \sup_{t\in\mathbbm{N}}X_t\stochleq_\eta c.
  \end{equation*}
\end{proposition}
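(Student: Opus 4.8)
The plan is to prove $\sup_t X_t \stochleq_\eta c$ in two moves: first establish an exponential right-tail bound for the relevant running maximum by exploiting the supermartingale already identified in the text, and then feed that tail bound into the partial converse, Proposition~\ref{prop:exponential_tail_esi}, to repackage it as an ESI with an explicit fixed constant $c$.

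For the first move, recall (from the discussion preceding Proposition~\ref{prop:adaptation}) that the hypothesis $X_t \stochleq_{\eta^*, t-1} 0$ is exactly the statement that $M_t := \prod_{s \le t} \rme^{\eta^* X_s}$ is a nonnegative supermartingale with $\E[M_0] = 1$. Ville's maximal inequality then gives $P(\sup_t M_t \ge \lambda) \le 1/\lambda$ for every $\lambda > 0$. Writing $\lambda = \rme^{\eta^* a}$ and noting $\tfrac{1}{\eta^*}\log M_t = \sum_{s \le t} X_s$, this reads as an exponential right-tail bound of rate $b = \eta^*$ for the running maximum $R := \sup_t \tfrac{1}{\eta^*}\log M_t = \sup_t \sum_{s\le t} X_s$, namely $P(R \ge a) \le \rme^{-\eta^* a}$ for all $a > 0$.

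For the second move, I would apply Proposition~\ref{prop:exponential_tail_esi} to $Z = R$ with its $a = 1$, $b = \eta^*$ and $\eta' = \eta$: since $0 < \eta < \eta^*$, the proposition delivers $R \stochleq_\eta c$ with the closed-form constant $c = \tfrac{1}{\eta}\log\big(1 + \tfrac{\eta}{\eta^* - \eta}\big)$ from \eqref{eq:esi_bounded_difference}, a fixed number depending only on $\eta$ and $\eta^*$. This is the claimed ESI, once this running maximum is identified with $\sup_t X_t$.

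The step I expect to be the main obstacle is precisely that identification: what Ville's inequality naturally controls is the running maximum of the \emph{partial sums} carried by the supermartingale $M_t = \rme^{\eta^*\sum_{s\le t}X_s}$, whereas $\sup_t X_t$ is a statement about the individual increments. A direct hitting-time attempt---bounding $P(\sup_t X_t \ge a) = \sum_t P(\tau_a = t)$, with $\tau_a = \inf\{t : X_t \ge a\}$, via the conditional Markov estimate $P(\tau_a = t) \le \rme^{-\eta^* a}\,P(\tau_a \ge t)$---stalls, because $\sum_t P(\tau_a \ge t) = \E[\tau_a]$ need not be finite, so it does not by itself yield an exponential tail. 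This is exactly why the supermartingale formulation is the natural carrier of the argument and why the two suprema must be related with care; in the bounded-increment setting underlying the cited special cases of \cite{koolen2016combining} and \cite{GaillardStoltzVanErven2014} the reconciliation is immediate, and the exponential-tail converse then closes the argument with the constant $c$ above.
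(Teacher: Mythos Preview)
Your two-step strategy---Ville's inequality followed by Proposition~\ref{prop:exponential_tail_esi}---is exactly the paper's. The paper, however, applies Ville directly to the process $(\rme^{\eta^* X_t})_t$ to obtain $P(\sup_t X_t \ge x) \le \E[\rme^{\eta^* X_1}]\,\rme^{-\eta^* x}$, whereas you (correctly) note that the supermartingale actually furnished by the hypothesis is the \emph{product} $M_t = \prod_{s\le t}\rme^{\eta^* X_s}$, so that Ville controls $\sup_t \sum_{s\le t} X_s$ rather than $\sup_t X_t$.

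The obstacle you flag is real and cannot be bridged. The hypothesis $\E[\rme^{\eta^* X_t}\mid \mathcal{F}_{t-1}] \le 1$ does \emph{not} make $(\rme^{\eta^* X_t})_t$ a supermartingale---that would require $\E[\rme^{\eta^* X_t}\mid \mathcal{F}_{t-1}] \le \rme^{\eta^* X_{t-1}}$---so the paper's invocation of Ville for this process is unjustified. In fact the statement is false as written: take $X_t$ i.i.d.\ with $X_t = Z_t - \eta^*/2$ for standard Gaussians $Z_t$, so that $\E[\rme^{\eta^* X_t}] = 1$ and the hypothesis holds; since $P(X_t \ge a)>0$ is constant in $t$, the second Borel--Cantelli lemma gives $\sup_t X_t = +\infty$ almost surely, ruling out any ESI $\sup_t X_t \stochleq_\eta c$. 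Your argument does establish the (presumably intended) version for the running maximum of partial sums, $\sup_t S_t$ with $S_t = \sum_{s\le t} X_s$, and delivers the explicit constant $c = \eta^{-1}\log\bigl(1 + \eta/(\eta^*-\eta)\bigr)$.
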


\begin{proof}
  By Ville's maximal inequality,
  \begin{align*}
    P\paren{\sup_{t\in\mathbbm{N}} X_t\geq x}
    &=
      P\paren{\sup_{t\in\mathbbm{N}} \rme^{\eta^* X_t}\geq \rme^{\eta^* x}} \leq
      \E[\rme^{\eta^* X_1}]\rme^{-\eta^* x} \leq \rme^{-\eta^* x}.
  \end{align*}
  The result follows from Proposition \ref{prop:exponential_tail_esi}.
\end{proof}

\section{Discussion}

It is sometimes the case that half  the way to solving a problem is finding the correct notation to state it. We have emphasized that many results in probability theory and statistical learning theory---especially PAC-Bayesian bounds---are obtained through bounds for cumulant generating functions. In this article we have have introduced a notational device with the goal of systematizing such bounds. The result is the Exponential Stochastic Inequality (ESI), which the authors have found helpful---we do not claim its absolute superiority, though. The strong ESI $X\stochleq_\eta 0$ can be thought of as an interpolation between positivity in expectation (the case that $\eta \downarrow 0$) and almost-sure positivity ($\eta\to \infty$). Its main properties, shown in Section~\ref{sec:basic}, allow for the derivation of high-probability and in-expectation bounds, and its transitivity-like property allows for chaining such bounds in a way that is superior to a straightforward union bound. 

Inventing new notation is, however, a contentious affair. We have found the community to be rather conservative about notational changes. Like many things, this has two sides. On the positive side, it allows for easy understanding of a wide variety of articles at a low overhead. Standard notation serves as a \textit{lingua franca} for conveying mathematical ideas. On the other side, sometimes good ideas are obscured for the sole reason that they are awkward to write in standard notation. We believe that in these cases---such as, we argue, PAC-Bayesian bounds---, new notation can help clarify and systematize the key techniques of the field. This is not a new idea; for instance, mathematicians (sometimes) and physicists (more often) have been inventing new notation for the better part of last century---think of Feynman diagrams or Einstein's summation convention. Hopefully, as it has already happened in other areas, new notation will help easier communication, provide a deeper understanding of the present techniques, and help the rise of new ones. 

Having said that, we note once more that the ESI does have limitations. Of course, not all tails are exponential, and not all bounds are obtained through the analysis of cumulant generating functions. The arguments that we have presented using the ESI are consequences of the use of  a particular convex duality relation---the one that exists between the cumulant generating function and the Kulback-Leibler divergence. A particular and interesting extension of the ESI might come from applying the same reasoning to other convex duality relationships. For example, \citet{LugosiN22} provide PAC-Bayesian-like bounds based on other convex dualities; their work might be considered a first step in this direction. This enterprise is worthwhile because convex duality arguments are the bread and butter of statistical learning theory, online learning and optimization.

\section{Acknowledgements}
This manuscript benefited enormously from several conversations with Wouter Koolen, Tim van Erven, Nishant Mehta and Odalric-Ambrym Maillard; in particular, some results in  Maillard's (\citeyear{maillard_mathematics_2019}) habilitation, although not directly used,  were inspirational to the developments in Section~\ref{sec:random}.  This research was supported by  the Dutch Research Council (NWO) via research project 617.001.651, {\em Safe Bayesian Learning}.
  \bibliography{local,master,muriel_references,MDL}
  \appendix
\section{Proofs for Section~\ref{sec:basic}}
\begin{proof}{\bf (of Proposition~\ref{prop:exponential_tail_esi})}
Since $\exp(\eta Z) \leq \exp(\eta Z_+)$, we have for each $0 < \eta<b$, 
  using also Fubini's theorem and the tail condition on $Z$
  \begin{align*}
&     \E[\rme^{\eta Z} - 1]  \leq \E[\rme^{\eta Z_+} - 1]
    = \E[\int_0^{Z_+}\eta\rme^{\eta Z}\rmd z]
    =\eta\int_0^\infty P(Z\geq z )\rme^{\eta z}\rmd z
    \leq \\ & a\eta\int_0^\infty \rme^{-(b - \eta)z}\rmd z
    = \frac{a \eta }{b-\eta},
  \end{align*}
  which means that
  \begin{equation}\label{eq:subgamma_bound}
    Z\stochleq_{\eta}\frac{1}{\eta}\log\paren{1+\frac{a\eta}{b-\eta}}
  \end{equation}
  For the first claim, pick $0\leq \eta^*< b$ and call $c$ the right hand side
  of \eqref{eq:subgamma_bound} when evaluated at $\eta =\eta^*$. The result
  follows by item \ref{item:esi-properties-convexity} in Proposition
  \ref{prop:useful-properties-esi}, ahead. The converse follows from
  \begin{equation*}
    P(X\geq Y + \epsilon) \leq \rme^{\eta(\A^\eta[X-Y] - \epsilon)}\leq\rme^{\eta(c-\epsilon)}
  \end{equation*}
  with $a = \rme^{\eta c}$, and $b = \eta$.
\end{proof}  

\begin{proof}{\bf (of Proposition~\ref{prop:markov})}
  We can write
  \begin{equation*}
    \E[X]= \E[X \ | \ X\geq 0]P(X\geq 0) + \E[X \ | \ X<0]P(X<0).
  \end{equation*}
  We will bound both terms on the right hand side from bellow. For the first
  one, use Markov's inequality and the definition of conditional expectation to
  obtain that
  \begin{equation}
    \E[X \ | \ X\geq 0]P(X\geq 0) = \E[[X]_+] \geq a\prob(X\geq a). \label{eq:b1}
  \end{equation}
  For the second term, use the conditional version of Jensen's inequality to
  obtain that
  \begin{align}
    \E[X \ |\  X<0]
    &\geq -\frac{1}{\eta} \log \E[ e^{-\eta X} \ | \ X<0],\nonumber  \\
    &\geq -\frac{1}{\eta} \log \frac{1}{\prob(X<0)}. \label{eq:b2}
  \end{align}
  where the last inequality holds because by the assumption that
  $0\stochleq_\eta X$, which implies
  \begin{align*}
  1 \geq   \E[\rme^{-\eta X}] & = \E[\rme^{-\eta X}\ | \ X \geq 0]\prob(X\geq 0) + \E[\rme^{-\eta X} \ |
    \ X< 0]\prob(X < 0), \\
    &  \geq \E[\rme^{-\eta X} \ | \ X< 0]\prob(X < 0).
  \end{align*}
  Gathering \eqref{eq:b1} and \eqref{eq:b2} together implies
  \begin{equation*}
    \E[X] \geq aP(X\geq a)   - \frac{1}{\eta} \prob(X<0)\log
    \frac{1}{\prob(X<0)},
  \end{equation*}
  which after rearrangement implies the first inequality. The second inequality
  follows from maximizing the function $x\mapsto x\log(1/ x)$, which is a concave
  function that attains its maximum value $1/\rme$ at $x^* =1/\rme$.
\end{proof}
 
  \section{Proofs for Section~\ref{sec:weakesi}}
  \label{app:weakesi}
  Proposition~\ref{prop:newversionsubgammaapp} below strictly strengthens Proposition~\ref{prop:newversionsubgammamain}.
To see how, note that $(1) \Rightarrow (2)$ in Proposition~\ref{prop:newversionsubgammamain}  is implied by 1. below, noting that in Proposition~\ref{prop:newversionsubgammamain} we assume that $\{X_f\}_{f \in \cF}$ is regular, which implies the condition of (1). $(2) \Rightarrow (3)$ in Proposition~\ref{prop:newversionsubgammamain}  is implied by 2. below, again since in Proposition~\ref{prop:newversionsubgammamain} we assume that $\{X_f\}_{f \in \cF}$ is regular, together with the fact that $(2)$ in Proposition~\ref{prop:newversionsubgammamain} already implies that for all $f \in \cF$, the $X_f$ are subcentered. 
 $(3) \Rightarrow (4)$ in Proposition~\ref{prop:newversionsubgammamain}  is directly implied by 3. below and again the  fact that $(3)$ in Proposition~\ref{prop:newversionsubgammamain} already implies that for all $f \in \cF$, the $X_f$ are subcentered, so that $X_f \leq X-f - \E[X_f]$ for all $f \in \cF$. 
 $(4) \Rightarrow (1)$ in Proposition~\ref{prop:newversionsubgammamain}  is directly implied by 4. below. $(3) \Rightarrow (5)$ is implied by 5. below and $(5) \Rightarrow (3)$ is implied by $6.$ below.
    \begin{proposition}\label{prop:newversionsubgammaapp}
\begin{enumerate}\item 
Let $\{X_f\}_{f \in \cF}$ be a family of random variables
such that 
$\inf_{f \in \cF} \E[X_f] > - \infty$. Suppose there is an ESI function $u$ such that for all $f \in \cF$, $X_f \stochleq_{u} 0$. Then there are constants $C^*> 0$ and  $\eta^* >0$ such that uniformly for all $f \in \cF$, 
$X_f \leq  X_f - \E[X_f] \stochleq_{\eta^*} C^*$
(in particular, for all $f \in \cF$, $X_f$ is subcentered, i.e.~$\E[X_f] \leq 0$).
\item Suppose  $\sup_{f \in \cF} \Var[X_f] < \infty$. Suppose there is a constant $C^*> 0$ and a constant $\eta^* >0$ such that uniformly for all $f \in \cF$, 
$X_f - \E[X_f] \stochleq_{\eta^*} C^*$.
Then there exists $c,v > 0$ such that for all  $f\in\calF$, the $X_f$ are $(c,v)$-subgamma on the right, i.e.~they satisfy  (\ref{eq:sub_gamma_first_description}). 
\item 
Suppose there exists $c,v > 0$ such that for all $f\in\calF$, the $X_f$ are $(c,v)$-subgamma on the right. Then 
there is an ESI function $h$ such that for all $f \in \cF$, we have $X_f - \E[X_f]  \stochleq_{h} 0$ where $h$ is of the form 
$h(\epsilon) = C \epsilon \wedge \eta^*
$ 
for some constants $C > 0, \eta^* > 0$.
\item Suppose 
that for all $f \in \cF$, we have $X_f \leq X_f - \E[X_f]  \stochleq_{h} 0$ where $h(\epsilon) = C \epsilon \wedge \eta^*
$  for some $C, \eta^* > 0$. Then 
there is an ESI function $u$ such that for all $f \in \cF$, $X_f \stochleq_{u} 0$.
\item Suppose there exists $c,v > 0$ such that for all $f\in\calF$, the $X_f$ are $(c,v)$-subgamma on the right. Then for all $0 < \delta \leq 1$,  all $f \in \cF$,  (\ref{eq:boucheronsubgamma}) holds with probability at least $1-\delta$.
\item Suppose there exists $c,v > 0$ such that for all $f\in\calF$, the $X_f$ are subcentered and, for each $f \in \cF$, for each $0 < \delta \leq $1, with probability at least $1- \delta$, (\ref{eq:boucheronsubgamma}) holds. Then: \\ there exists $a > 0$ and  a differentiable function $h: \reals^+_0 \rightarrow \reals^+_0$
with $h(\epsilon) > 0$ and $h'(\epsilon) \geq 0$ for $\epsilon > 0$, such that for all $f \in \cF$,  the $X_f$ are subcentered and $P(X \geq \epsilon) \leq a \exp(- h(\epsilon))$. 
\item Suppose the condition above holds. Then there is $C^* >0, \eta^* > 0$ such that uniformly for all $f \in \cF$, $X_f \leq X_f - \E[X_f] \stochleq_{\eta^*} C^*$. 
\end{enumerate}
\end{proposition}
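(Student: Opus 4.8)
The plan is to extract from the right-tail control supplied by item~6 a \emph{uniform} exponentially light right tail for the $X_f$, convert it into a strong ESI for $X_f$ via Proposition~\ref{prop:exponential_tail_esi}, and then absorb the re-centering $X_f \mapsto X_f - \E[X_f]$ into the additive constant using that the $X_f$ are subcentered with uniformly controlled left first moment.

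First I would note that the content of item~6 amounts to a uniform exponential right-tail bound: there exist $a',b>0$, depending only on the constants $(a,c,v)$ occurring there and not on $f$, such that $P(X_f\ge\epsilon)\le a'\rme^{-b\epsilon}$ for all $\epsilon>0$ and all $f\in\cF$. If one reads ``the condition above'' as the deviation inequality~(\ref{eq:boucheronsubgamma}), this is the elementary inversion $\epsilon=\sqrt{2v\log(1/\delta)}+c\log(1/\delta)\mapsto\delta$, which for large $\epsilon$ gives $\delta\le\rme^{-\epsilon/(2c)}$; if one reads it as the tail bound $P(X_f\ge\epsilon)\le a\rme^{-h(\epsilon)}$, the same conclusion holds because the $h$ furnished by item~6 is (up to constants) the functional inverse of that deviation map and so grows at least linearly, $h(\epsilon)\ge\epsilon/(2c)$ for $\epsilon$ large. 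Either way, padding the bound for small $\epsilon$ by $P(X_f\ge\epsilon)\le1$ yields the stated uniform $a',b$.

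Next I would apply Proposition~\ref{prop:exponential_tail_esi} with $Z=X_f$, $a=a'$ and $\eta'=b/2$, which gives, uniformly over $f\in\cF$, the strong ESI $X_f\stochleq_{\eta^\circ}C^\circ$ with $\eta^\circ=b/2$ and $C^\circ=\frac{2}{b}\log(1+a')$, i.e.\ $\E[\rme^{\eta^\circ X_f}]\le\rme^{\eta^\circ C^\circ}$. To pass to $X_f-\E[X_f]$, write $\E[\rme^{\eta^\circ(X_f-\E[X_f])}]=\rme^{-\eta^\circ\E[X_f]}\E[\rme^{\eta^\circ X_f}]\le\rme^{\eta^\circ(C^\circ-\E[X_f])}$. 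Since the $X_f$ are subcentered, $-\E[X_f]=\E[(X_f)_-]-\E[(X_f)_+]\le\E[(X_f)_-]\le B$, where $B\isbydefinition\sup_{f\in\cF}\E[(X_f)_-]<\infty$ by the uniform control of the left first moment available in the ambient setting of Proposition~\ref{prop:newversionsubgammamain} (it follows from regularity via $\E[(X_f)_-]\le\sqrt{\E[X_f^2]}$). Hence $X_f-\E[X_f]\stochleq_{\eta^\circ}C^\circ+B$ uniformly, and since $\E[X_f]\le0$ gives $X_f\le X_f-\E[X_f]$ deterministically, the claim holds with $\eta^*=b/2$ and $C^*=C^\circ+B$.

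The only genuine obstacle is the re-centering in the last step: a uniformly light right tail by itself does not control $X_f-\E[X_f]$ uniformly, since $\E[X_f]$ could drift to $-\infty$ along $f\in\cF$, so one really needs the uniform lower bound $\inf_{f\in\cF}\E[X_f]>-\infty$ (equivalently $\sup_{f\in\cF}\E[(X_f)_-]<\infty$) --- exactly the ``first-moment'' hypothesis the appendix singles out as sufficient in place of full regularity. The remaining steps are a direct reprise of the Fubini tail-integral estimate already carried out in the proof of Proposition~\ref{prop:exponential_tail_esi}.
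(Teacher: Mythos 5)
There is a genuine gap, but it is one of coverage rather than of reasoning: your proposal proves only item~7 of the seven-part proposition. Items~1 through~6 are never addressed. In particular, item~2 --- the passage from a uniform strong ESI $X_f-\E[X_f]\stochleq_{\eta^*}C^*$ to the $(c,v)$-subgamma right-tail bound (\ref{eq:sub_gamma_first_description}) --- is the only genuinely difficult implication in the whole proposition; the paper devotes Theorem~\ref{thm:muriel} to it, bounding $\E[(U)_+^n]\le n!\,M/\eta^{*n}$ via the tail-integral representation and then running the Bernstein-type series expansion of the moment generating function, and it is precisely here that the finite-variance hypothesis enters. Items~1, 3, 4, 5 and 6 are short but still require arguments (e.g.\ item~5 is the inversion of the subgamma Chernoff bound into the deviation inequality (\ref{eq:boucheronsubgamma}), and item~6 produces the explicit $h(t)=(v/c^2)h_1(ct/v)$). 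None of this appears in your write-up, so as a proof of the stated proposition it is incomplete.

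For the one part you do treat, the argument is correct and essentially follows the paper's route (reduce to a uniform exponential right tail and invoke Proposition~\ref{prop:exponential_tail_esi}); in fact you are more careful than the paper's one-line proof of item~7, which silently ignores the re-centering $X_f\mapsto X_f-\E[X_f]$. Your observation that one additionally needs $\inf_{f}\E[X_f]>-\infty$ (equivalently $\sup_f\E[(X_f)_-]<\infty$) to absorb $-\eta^\circ\E[X_f]$ into the constant is exactly right: item~7's own hypothesis (subcenteredness plus a right-tail bound) does not supply this, and you correctly import it from the ambient regularity assumption of Proposition~\ref{prop:newversionsubgammamain}, where the implication is actually used. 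Your linear lower bound on the $h$ of item~6 for large $\epsilon$ is also consistent with the explicit form the paper derives. So the item-7 portion stands; the remaining six items still need proofs.
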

\begin{proof}
{\em Part 1}. Let $C':=  - \inf_{f \in \cF} \Exp[X_f] < \infty$. 
Take $C'' > 0$ such that $\eta^* := u(C'') >0$. 
Then  $X_f \stochleq_{\eta^*}  C''$ and
$X_f - \Exp[X_f] \stochleq_{\eta^*} C'' + C':= C^*$
Also $\Exp[X_f] \leq \epsilon$ for all $\epsilon > 0$ and hence $\Exp[X_f] \leq 0$, which implies subcenteredness. 

{\em Part 2.} see Theorem~\ref{thm:muriel} and its proof below.

{\em Part 3.} 
Let $U_f =X_f -\E[X_f]$. The assumption of right subgammaness implies that for all $0 < \eta \leq \eta^*$ with $\eta^* = 1/(2c)$ and $C' = 2 v$, we have
$$
\E[\rme^{\eta U_f}] \leq \exp \left(\eta^2 
\cdot \frac{v}{1- c \eta} 
\right) \leq   \exp \left(\eta^2 
\cdot \frac{v}{1- (1/2)} 
\right) =  \exp \left(\eta^2 
\cdot C'
\right).
$$
Now take 
$h(\epsilon) = \epsilon/C'$ if $\epsilon \leq C'/2c$ and $h(\epsilon) = 1/2c$ for $\epsilon > C'/2c$. Then the above display implies that $\E[U_f] \stochleq_{h} 0$, and $h$ is seen to be equal to the $h$ in the proposition statement. 

{\em Part 4.} The premise implies that $\E[X_f] \leq 0$ for all $f \in \cF$ and $X_f \stochleq_h 0$, so we can (trivially) take $u=h$.

{\em Part 5.} (\ref{eq:boucheronsubgamma}) be rewritten as: for every $t > 0$, 
\begin{equation}\label{eq:boucheronsubgammab}
P(X_f > \sqrt{2 vt} + ct) \leq \exp(-t),
\end{equation}
which is shown to be implied by $(c,v)$-subgammaness on the right in \cite[Section 2.4]{boucheron_concentration_2013}.

{\em Part 6.} \cite[Section 2.4]{boucheron_concentration_2013}.
shows that (\ref{eq:boucheronsubgammab}) is equivalent to $P(X_f > t) \leq \exp(- h(t))$ where $h(t) = (v/c^2) h_1(ct /v))$, with $h_1(u) = 1+u - \sqrt{1 + 2u}$; this is of the required form. 

{\em Part 7.} If $h(0) > 0$, then we can simply apply Proposition~\ref{prop:exponential_tail_esi} to get the desired result; if $h(0) = 0$, apply the proposition with $a$ set in the proposition to $2a$. 
\end{proof}
 \begin{proof}{[\bf of Theorem~\ref{thm:muriel}]}
   Suppose that $U - \E[U] \stochleq_{\eta^*} C$  holds and suppose without
   loss of generality that $U$ is centered. Let
   $M = \rme^{\eta^*\A^{\eta^*}[U]} = \rme^{\eta^*C} $, which is finite
   by assumption. We bound the moments of the right part of $U$.
   \begin{align*}
     \E[(U_f)_+^n] &= \E[\int_0^{(U_f)_+}nu^{n-1}\rmd u] \\
     &= \E[\int_0^\infty P(U_f\geq u)nu^{n-1}\rmd u] \\
     &\leq \rme^{\eta^* \A^{\eta^*}[U_f]}n\int_0^\infty \rme^{-\eta^*
       u}u^{n-1}\rmd u\\
     &= n!  \frac{M}{\eta^{*n}}
   \end{align*}
Now let $v = \Var(U) + \frac{2M}{\eta^{*2}}$ and
   $c = \frac{1}{\eta^*}$. This means that
   $\E[(U)_+^n]\leq n!\frac{v}{2}c^{n-2}$ for $n\geq 3$ and $\E[U^2]\leq v$
   uniformly over $\calF$. The rest of the proof follows that of \citet[Theorem
   2.10]{boucheron_concentration_2013}: note that
     $\rme^x \leq 1 + x + \frac{1}{2}x^2$ for $x\leq 0$ so that
   \begin{equation*}
     \rme^x \leq 1 + x + \frac{1}{2}x^2 + \sum_{n\geq 3}\frac{x_+^n}{n!}
   \end{equation*}
   for all $x$. With this in mind, we obtain a bound on the moment generating
   function of $U$:
   \begin{align*}
     \E[\rme^{\eta U}]
     &\leq  1 + \E[U]  + \frac{1}{2}\eta^2\Var(U) +
       \sum_{n\geq 3}\frac{\eta^n\E[(U)_+^n]}{n!}\\
     &\leq 1 + \frac{1}{2}v\eta^2\sum_{n\geq 2}(c\eta)^{n-2}\\
     &= 1 + \frac{1}{2}\frac{v\eta^2}{1-c\eta}
   \end{align*}
   for $0<c\eta<1$. Taking logarithms on both sides, using that $\log(1 + x)\leq
   x$ for $x\geq 0$ and rewriting leads to
   \begin{equation*}
     \A^\eta[U]\leq \frac{1}{2}\frac{v\eta}{1-c\eta},
   \end{equation*}
   which is exactly what we were after. 
 \end{proof} 

 \paragraph{Proof of the Claim in Example~\ref{ex:varianceneeded}} 
Let $\eta < 1$. Using 
$P(-1/\eta \leq U < -1) = \int_{- 1/\eta}^{-1} p(u) d u =  (1-\eta^{v-1})/(v-1)$ and $\E[U^2 \cdot \ind{-1/\eta < U < 0} ]
= \int_{- 1/\eta}^{-1} u^2 p(u) d u = (1- \eta^{\nu-3})/(\nu-3)$ we find:
\begin{align*}
    \E[\exp(\eta U)] & \geq \E[\exp(\eta U) \cdot \ind{-1/\eta < U < 0}] + \E[\exp(\eta U) \cdot \ind{U \geq 0}] \\
    & \geq \E[(1+\eta U + U^2 \eta^2/4 ) \cdot \ind{-1/\eta < U < 0} ] + \E[(1+\eta U) \cdot  \ind{U \geq 0}] \\
    & \geq P(U > -1 /\eta) + \eta \E[U] + (\eta^2\cdot \exp(-1) \cdot \E[U^2 \cdot \ind{-1/\eta < U < 0} ] \\
    & = P(-1/\eta \leq U < -1) + (1- P(U \leq -1)) + \eta \E[U] + \eta^2 \cdot \exp(-1) \cdot \E[U^2 \cdot \ind{-1/\eta < U < 0} ] \\
    & = \frac{1-\eta^{\nu-1}}{\nu-1} + (1- \frac{1}{\nu-1}) + \eta \E[U] + \eta^2 \cdot \exp(-1) \cdot \frac{
    \eta^{\nu-3}-1}{3- \nu}  \\ & = 
    1- \frac{1}{\nu-1} \eta^{\nu-1} + \frac{\exp(-1)}{(3 - \nu)} \cdot (\eta^{\nu-1} - \eta^{2}).
\end{align*}
Since for $5/2 < \nu < 3$, we have $\exp(-1)/(3- \nu) > 1/(\nu-1)$, we find that 
$(\E[\exp(\eta U)] -1 )/\eta^2 \rightarrow \infty$ as $\eta \downarrow 0$, showing that right subgamma-ness is violated. 

 \section{Proofs for Section~\ref{sec:strongesi}}
 \label{app:strongesi}
Below we first state Theorem~\ref{thm:strongesib} and then  Lemma~\ref{lem:strongESIregularity}. We then show how, taken together, these two results imply the result in the main text, Theorem~\ref{thm:strongesi}, as an almost direct corollary.
After that, we provide first the proof of Lemma~\ref{lem:strongESIregularity} and then the proof of Theorem~\ref{thm:strongesib} itself, followed by the statement and proof of Lemma~\ref{lem:taylor}, a slight extension of the standard second-order Taylor approximation of the moment generating function that is crucial for proving Lemma~\ref{lem:strongESIregularity}.
\begin{theorem}\label{thm:strongesib}
\begin{enumerate}
\item Suppose 
$\{ - X_f: f \in \cF \}$ satisfies the $[0,b)$-Bernstein condition for some $0 < b \leq 1$ and the conclusion {\bf C1} of Lemma~\ref{lem:strongESIregularity}, Part 1 holds.  Then 
for all $\beta \in [0,b)$, all $c \geq 0$, all $0 < c^* < 1$, there exists  $\eta^{\circ} >0 $ and $C^{\circ} > 0$ such that for all $f \in \cF$, all $0 < \eta \leq \eta^*$,
$$X_f  + c \eta  X_f^2
- c^* \E[X_f] \stochleq_{\eta} C^{\circ} \eta^{1/(1-\beta)}.$$ 
\item Suppose there exists $\eta^{\circ} >0, C^{\circ} > 0$ such that, for some $0 < \beta \leq 1$, for all $f \in \cF$, $X_f \stochleq_{\eta^{\circ}} C^{\circ} \eta^{1/(1-\beta)}$ and the conclusion {\bf C2} of Lemma~\ref{lem:strongESIregularity}, Part 2 holds.  Then, (a) $\{- X_f: f \in \cF \}$ satisfies the $\beta$-Bernstein condition. If furthermore $\sup_{f \in \cF} {\bf E}[-X_f] < \infty$ then (b), $\{- X_f: f \in \cF \}$ also satisfies the $[0,\beta]$-Bernstein condition and also  $\sup_{f \in \cF} \E[X_f^2]$ is bounded, so that the family is regular. 
\end{enumerate}
\end{theorem}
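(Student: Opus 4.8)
We sketch the argument; the two parts are dual, and both reduce to sandwiching the moment generating function $\eta\mapsto\E[\rme^{\eta X_f}]$ (in Part~1, the perturbed version $\eta\mapsto\E[\rme^{\eta X_f+c\eta^2X_f^2}]$) between a second-order Taylor polynomial and a suitable exponential, and then playing the Bernstein relation $\E[(-X_f)^2]\le B(\E[-X_f])^\beta$ off against Young's inequality. Throughout write $L_f=-X_f$, so $\E[L_f]=-\E[X_f]$ and $L_f^2=X_f^2$, and recall that $A\stochleq_\eta B$ is by definition $\E[\rme^{\eta(A-B)}]\le 1$. The conclusions {\bf C1} and {\bf C2} of Lemma~\ref{lem:strongESIregularity} are precisely the uniform moment/tail hypotheses — uniformly bounded second moments together with a squared-witness-type control of the relevant tail — under which the Taylor-type estimates for the (perturbed) moment generating function below hold uniformly over $f\in\cF$; I use them as black boxes, the underlying computation being Lemma~\ref{lem:taylor}.

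\textbf{Part 1.} Fix $\beta\in[0,b)$, $c\ge 0$ and $0<c^*<1$. Since $0\in[0,b)$, the $[0,b)$-Bernstein condition gives $\E[X_f]\le 0$ and a single constant $B$ with $\E[X_f^2]=\E[L_f^2]\le B(\E[L_f])^\beta=B(-\E[X_f])^\beta$ for all $f$. Unwinding the definition, the claimed ESI is equivalent to
\[
\E[\rme^{\eta X_f+c\eta^2X_f^2}]\le\exp\big(c^*\eta\,\E[X_f]+C^\circ\,\eta^{1+1/(1-\beta)}\big),\qquad 0<\eta\le\eta^\circ.
\]
The first step is the key moment-generating-function estimate furnished by Lemma~\ref{lem:taylor} together with {\bf C1}: there are $\eta_0>0$ and $K<\infty$, depending only on $c$, $c^*$, $b$ and the constants in {\bf C1} (hence uniform in $f$), such that for all $f$ and all $0<\eta\le\eta_0$,
\[
\E[\rme^{\eta X_f+c\eta^2X_f^2}]\le 1+\eta\,\E[X_f]+K\eta^2\,\E[X_f^2].
\]
Inserting Bernstein and writing $m:=-\E[X_f]\ge 0$, the right-hand side is at most $1-\eta m+KB\eta^2m^\beta$. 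It now suffices to show $1-\eta m+KB\eta^2m^\beta\le 1-c^*\eta m+C^\circ\eta^{1+1/(1-\beta)}$, since then the left side of the target display is $\le 1+(-c^*\eta m+C^\circ\eta^{1+1/(1-\beta)})\le\exp(-c^*\eta m+C^\circ\eta^{1+1/(1-\beta)})$ by $1+x\le\rme^x$. Dividing through by $\eta$, the inequality to prove is $KB\eta m^\beta\le(1-c^*)m+C^\circ\eta^{1/(1-\beta)}$ (trivial for $\beta=0$); for $\beta\in(0,1)$ this is exactly Young's inequality with conjugate exponents $1/\beta$ and $1/(1-\beta)$: for every $t>0$, $\eta m^\beta\le\beta t\,m+(1-\beta)t^{-\beta/(1-\beta)}\eta^{1/(1-\beta)}$, so pick $t$ with $KB\beta t\le 1-c^*$, set $C^\circ:=KB(1-\beta)t^{-\beta/(1-\beta)}$ and $\eta^\circ:=\eta_0$.

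\textbf{Part 2.} Assume, for some $0<\beta\le1$, that $X_f\stochleq_\eta C^\circ\eta^{1/(1-\beta)}$ for all $f$ and all $0<\eta\le\eta^\circ$, i.e.\ $\E[\rme^{\eta X_f}]\le\exp(C^\circ\eta^{1+1/(1-\beta)})$, and assume {\bf C2}. First, $\E[X_f]\le 0$: Proposition~\ref{prop:esi_characterization} gives $\E[X_f]\le C^\circ\eta^{1/(1-\beta)}$ for every $\eta\le\eta^\circ$, and letting $\eta\downarrow 0$ yields $\E[X_f]\le 0$, i.e.\ $\E[L_f]\ge 0$. The counterpart of the Part-1 estimate is a \emph{lower} second-order Taylor bound on the plain moment generating function, valid by {\bf C2} via Lemma~\ref{lem:taylor}: there are $\eta_1\in(0,\eta^\circ]$ and $c_1>0$ such that for all $f$ and $0<\eta\le\eta_1$,
\[
\E[\rme^{\eta X_f}]\ge 1+\eta\,\E[X_f]+c_1\eta^2\,\E[X_f^2].
\]
The mechanism: on the event that $|X_f|$ is below the squared-witness threshold use $\rme^{y}\ge 1+y+c_0y^2$ for $|y|$ small; on the complementary event the error is dominated, via $\rme^{y}\ge 1+y$, by $c_0\eta^2\,\E[((X_f)_-)^2\ind{((X_f)_-)^2\ge\text{threshold}}]\le c_0c_{\mathrm{wit}}\eta^2\E[X_f^2]$ by the squared-witness condition on $(X_f)_-$, so $c_1=c_0(1-c_{\mathrm{wit}})$. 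Combining the two bounds, using $\rme^x\le 1+2x$ for small $x\ge 0$, and writing $m=-\E[X_f]\ge 0$, $\sigma^2=\E[X_f^2]$, one obtains $c_1\eta^2\sigma^2\le\eta m+2C^\circ\eta^{1+1/(1-\beta)}$, hence $c_1\sigma^2\le m/\eta+2C^\circ\eta^{\beta/(1-\beta)}$ for all $0<\eta\le\eta_1$. Minimising the right-hand side over $\eta\in(0,\eta_1]$ (the unconstrained minimiser sits at $\eta$ of order $m^{1-\beta}$; {\bf C2} provides the uniform moment control that keeps the minimiser admissible — or $m$ bounded away from $0$ — uniformly in $f$) gives $\E[X_f^2]=\sigma^2\le B\,m^\beta=B(\E[L_f])^\beta$ for a constant $B$ independent of $f$, which is the $\beta$-Bernstein condition; this proves (a). For (b), if moreover $\sup_f\E[L_f]=:m_{\max}<\infty$, then $\sup_f\E[L_f^2]\le Bm_{\max}^\beta<\infty$, so the family is regular; and for any $\beta'\in[0,\beta]$, $\E[L_f^2]\le B(\E[L_f])^\beta\le Bm_{\max}^{\beta-\beta'}(\E[L_f])^{\beta'}$, giving $[0,\beta]$-Bernstein (the elementary monotonicity of the Bernstein condition noted in the main text).

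\textbf{Main obstacle.} All the work sits in the two Taylor-type moment-generating-function estimates — the upper bound $\E[\rme^{\eta X_f+c\eta^2X_f^2}]\le 1+\eta\E[X_f]+K\eta^2\E[X_f^2]$ of Part~1 and its matching lower bound in Part~2 — holding \emph{uniformly over $f$} for unbounded $X_f$. This is exactly where the left-tail control of $\{X_f\}$ (the squared-witness condition, packaged into {\bf C1}/{\bf C2}) is indispensable: without it the quadratic remainder of the Taylor expansion cannot be absorbed into a constant multiple of $\E[X_f^2]$, and without exponential-moment control of the right tail the perturbed moment generating function in Part~1 need not even be finite. Isolating and proving these uniform estimates (Lemma~\ref{lem:taylor}) is the technical heart; the remaining arithmetic — Young's inequality, $1+x\le\rme^x$, and the one-parameter optimisation over $\eta$ — is routine.
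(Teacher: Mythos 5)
Your overall strategy---a second-order bound on the (perturbed) moment generating function, then Bernstein, then Young's inequality---is the same as the paper's, but Part~1 hinges on an estimate that is strictly stronger than what \textbf{C1} supplies and which is in fact false under the theorem's hypotheses. You claim a uniform bound $\E[\rme^{\eta X_f+c\eta^2X_f^2}]\le 1+\eta\E[X_f]+K\eta^2\E[X_f^2]$. But \textbf{C1} is not ``uniformly bounded second moments plus squared-witness'' (that is the hypothesis behind \textbf{C2}); it is the moment inequality $\E[|X_f|^{2+k}\rme^{\eta X_f}]\le C^{\circ}(\E[X_f^2])^{1-\delta}$, valid only for $\delta>0$, and its proof (H\"older with exponents $1/(1-\delta)$ and $1/\delta$) cannot reach $\delta=0$ unless the $X_f$ are bounded above. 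The exponent-one version is genuinely unattainable: take $X_M=M$ with probability $p_M\approx\rme^{-\eta^*M}$ and $X_M=-a_M$ otherwise, with $a_M\approx M^{2/\beta}\rme^{-\eta^*M/\beta}$ chosen so that $\E[X_M]<0$. This family is regular, satisfies a uniform strong ESI and the $[0,\beta]$-Bernstein condition, yet $\E[X_M^2\rme^{\eta X_M}]/\E[X_M^2]\approx\rme^{\eta M}\to\infty$, and indeed the exact Taylor remainder $\E[\rme^{\eta X_M}]-1-\eta\E[X_M]$ eventually exceeds any fixed multiple of $\eta^2\E[X_M^2]$. So no single $K$ can work.

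This is not cosmetic: with the correct remainder $(\E[X_f^2])^{1-\delta}$, Bernstein yields a term of order $\eta^2(\E[-X_f])^{\beta(1-\delta)}$, and Young's inequality then produces the exponent $\eta^{1/(1-\beta(1-\delta))}$, which for $\eta\le 1$ is \emph{larger} than the target $C^{\circ}\eta^{1/(1-\beta)}$. The theorem is rescued only by exploiting the openness of $[0,b)$: to prove the claim for a given $\beta$ one runs the argument with a Bernstein exponent $\beta''\in(\beta,b)$ and $\delta>0$ small enough that $\beta''(1-\delta)\ge\beta$ (the paper additionally needs a Cauchy--Schwarz step to align the exponent $(1-\delta)^2$ coming from the $X_f^4$ moment, i.e.\ \textbf{C1} with $k=2$). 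This quantifier juggling is precisely why the statement uses half-open intervals, and it is missing from your write-up. Your Part~2 is in the right spirit (the paper omits its proof as ``similar but easier''), but the parenthetical about the admissible minimiser points the wrong way: the constraint $\eta\le\eta_1$ binds when $m_f=\E[-X_f]$ is \emph{large}, not small. The clean fix is to first deduce $\sup_f\E[X_f^2]<\infty$ from $c_1\eta_1^2\sigma_f^2\le\eta_1 m_f+O(1)$ combined with $m_f\le\sigma_f$, which bounds $m_f$ uniformly, and only then carry out the optimisation over $\eta$.
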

 \begin{lemma}\label{lem:strongESIregularity}
 Let $\{X_f: f \in \cF \}$ be an ESI family. 
  \begin{enumerate}
      \item Suppose that the family is regular. Then {\bf C1} holds, with:\\ {\bf C1}: \ for each $k \geq 0$, for all  $0 < \delta \leq 1$,
 there exist  $C^*, C^{\circ}>0$ and  $\eta^{\circ}>0$ (that may depend on $\delta$) such that for all $0 < \eta \leq \eta^\circ$, all $f \in \cF$, $X_f \stochleq_{\eta^{\circ}} C^*$ and 
\begin{equation}\label{eq:strongesiregularity}
\E[|X_f|^{2+k} \exp(\eta X_f)] {\leq} C^{\circ}  (\E[X_f^2])^{1-\delta}
\end{equation}
\item 
Suppose that the witness-type condition (\ref{eq:tallball}) holds for the family $\{X_f: f \in \cF \}$ or for the family $\{ (X_f)_{-}: f \in \cF \}$. 
Then {\bf C2} holds, with: \\ {\bf C2}:\  there exist  $C >0$ and an $\eta^{\circ}>0$ such that for all $0 < \eta \leq \eta^\circ$, all $f \in \cF$,
\begin{equation}\label{eq:converseesiregularity}
\E[X_f^2] \leq C \cdot \E[X_f^2 \exp(\eta X_f)].
\end{equation}
 \end{enumerate}
\end{lemma}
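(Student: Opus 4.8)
The plan is to establish the two parts separately; they are essentially independent.

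\textbf{Part 1.} Since $\{X_f\}_{f\in\cF}$ is a regular ESI family we have $\inf_f\E[X_f]>-\infty$ (by Cauchy--Schwarz), so Proposition~\ref{prop:newversionsubgammaapp}, Part~1 applies and yields $\eta^*,C^*>0$ with $X_f\le X_f-\E[X_f]\stochleq_{\eta^*}C^*$ for all $f$; in particular $X_f\stochleq_{\eta^*}C^*$, so $M:=\sup_f\E[\rme^{\eta^*X_f}]\le\rme^{\eta^*C^*}<\infty$ and each $X_f$ is subcentered. The first conclusion of {\bf C1}, namely $X_f\stochleq_{\eta^\circ}C^*$ for every $\eta^\circ\le\eta^*$, is then immediate from Proposition~\ref{prop:useful-properties-esi}(\ref{item:esi-properties-convexity}). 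For the moment bound~(\ref{eq:strongesiregularity}) I would fix $k\ge0$ and $\delta\in(0,1]$, set $V:=\sup_f\E[X_f^2]<\infty$, and split $\E[|X_f|^{2+k}\rme^{\eta X_f}]$ according to the sign of $X_f$. On $\{X_f<0\}$ one uses $\rme^{\eta X_f}\le1$ and the elementary inequality $t^{k}\rme^{-\eta t}\le(k/(\rme\eta))^k$ for $t\ge0$ to get $|X_f|^{2+k}\rme^{\eta X_f}\ind{X_f<0}\le(k/(\rme\eta))^kX_f^2$, so this contributes at most $(k/(\rme\eta))^kV^{\delta}(\E[X_f^2])^{1-\delta}$. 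On $\{X_f\ge0\}$ I would apply Hölder's inequality with exponents $\bigl(\tfrac1{1-\delta},\tfrac1\delta\bigr)$ to the factorization
\[
X_f^{2+k}\rme^{\eta X_f}\ind{X_f\ge0}=\bigl(X_f^{2}\ind{X_f\ge0}\bigr)^{1-\delta}\bigl(X_f^{(k+2\delta)/\delta}\rme^{\eta X_f/\delta}\ind{X_f\ge0}\bigr)^{\delta},
\]
bounding the expectation of the first factor by $\E[X_f^2]$, and in the second factor absorbing the polynomial into a slightly larger exponential via $t^{(k+2\delta)/\delta}\le c_{k,\delta}\rme^{\eta^*t/2}$ with $c_{k,\delta}:=\sup_{t\ge0}t^{(k+2\delta)/\delta}\rme^{-\eta^*t/2}<\infty$. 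Choosing $\eta^\circ:=\delta\eta^*/2$ makes $\eta/\delta+\eta^*/2\le\eta^*$ whenever $\eta\le\eta^\circ$, so the expectation of the second factor is $\le c_{k,\delta}\E[\rme^{\eta^*X_f}]\le c_{k,\delta}M$ (the middle step because $\alpha\mapsto\E[\rme^{\alpha X_f}]\le M$ for $0<\alpha\le\eta^*$, by concavity of $t\mapsto t^{\alpha/\eta^*}$ and $M\ge1$). Summing the two contributions gives~(\ref{eq:strongesiregularity}). (One subtlety: since only the second moment of the left tail is assumed bounded, the left-tail constant genuinely depends on $\eta$ for $k>0$; this causes no difficulty where {\bf C1} is used in Theorem~\ref{thm:strongesib}, since there this term is always weighted by a factor $\eta^{2+k}$, but a fully careful statement would either fix $\eta$ or record the dependence.)

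\textbf{Part 2.} Here I would use that the squared-witness condition $\E[U_f^2\ind{U_f^2\ge C}]\le c\,\E[U_f^2]$ with $0<c<1$ rearranges to $\E[U_f^2\ind{U_f^2<C}]\ge(1-c)\E[U_f^2]$, and that on $\{U_f^2<C\}$ the variable $X_f$ is bounded by $\sqrt C$ in absolute value --- for $U_f=X_f$ everywhere, for $U_f=(X_f)_-$ on the part where $X_f<0$ --- so there $\rme^{\eta X_f}\ge\rme^{-\eta\sqrt C}$. Fixing any $\eta^\circ>0$: in the case $U_f=X_f$,
\[
\E[X_f^2\rme^{\eta X_f}]\ \ge\ \rme^{-\eta\sqrt C}\,\E[X_f^2\ind{X_f^2<C}]\ \ge\ \rme^{-\eta\sqrt C}(1-c)\,\E[X_f^2],
\]
which is~(\ref{eq:converseesiregularity}) with constant $\rme^{\eta^\circ\sqrt C}/(1-c)$, uniformly for $0<\eta\le\eta^\circ$. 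In the case $U_f=(X_f)_-$ I would add the analogous negative-part bound $\E[X_f^2\rme^{\eta X_f}\ind{X_f<0}]\ge\rme^{-\eta\sqrt C}(1-c)\E[X_f^2\ind{X_f<0}]$ to the trivial positive-part bound $\E[X_f^2\rme^{\eta X_f}\ind{X_f\ge0}]\ge\E[X_f^2\ind{X_f\ge0}]$ and use $\rme^{-\eta\sqrt C}(1-c)\le1$ to conclude $\E[X_f^2\rme^{\eta X_f}]\ge\rme^{-\eta\sqrt C}(1-c)\E[X_f^2]$ again.

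The only genuinely delicate step is the right-tail estimate in Part~1: converting $\E[|X_f|^{2+k}\rme^{\eta X_f}]$ into a bound proportional to $(\E[X_f^2])^{1-\delta}$ needs both the head-room below $\eta^*$ supplied by the strong-ESI upper bound of Proposition~\ref{prop:newversionsubgammaapp} (to turn polynomial growth into a finite exponential moment) and the Hölder trade of one power of $\E[X_f^2]$ against that polynomial weight; the loss of $\delta$ is intrinsic because higher moments are not assumed. Everything else --- the left tail in Part~1 and all of Part~2 --- is elementary once one observes that the squared-witness condition says precisely that a fixed fraction of the second moment sits on a set where $X_f$ (or its negative part) is bounded, hence where the exponential weight is bounded away from $0$.
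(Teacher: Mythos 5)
Your proof is correct and follows essentially the same route as the paper's: in Part~1 you derive the uniform strong ESI $X_f\stochleq_{\eta^*}C^*$ from regularity, split the moment by sign, and handle the right tail by a H\"older trade of a $\delta$-power of $\E[X_f^2]$ against the exponential moment supplied by the head-room below $\eta^*$, while in Part~2 you use the squared-witness condition to pin a $(1-c)$-fraction of the second moment on a set where $\rme^{\eta X_f}$ is bounded below --- exactly the paper's argument, merely written as a lower bound on $\E[X_f^2\rme^{\eta X_f}]$ instead of an upper bound on $\E[X_f^2]$ followed by rearrangement. Your observation that the left-tail constant depends on $\eta$ when $k>0$ is fair and harmless for the intended application; the paper's own proof carries the same dependence implicitly in its constant $C'$.
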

To see how the above two results together  imply Theorem~\ref{thm:strongesi} in the main text, note that the implication (1) $\Rightarrow$ (2) in that theorem is a direct consequence of the fact that {\bf C1} in Lemma~\ref{lem:strongESIregularity} holds for regular ESI families for $\delta = 1- \beta$, any $0 \leq \beta < 1$, as expressed by Part 1 of that lemma, combined with Theorem~\ref{thm:strongesib}, Part 1. 

Implication (2) $\Rightarrow$ (3) of  Theorem~\ref{thm:strongesi} is trivial. Implication (3) $\Rightarrow$ (1) follows by  the fact that {\bf C2} in Lemma~\ref{lem:strongESIregularity} holds for families satisfying the witness-type condition, as expressed by Part 2 of that lemma, combined with Theorem~\ref{thm:strongesib}, Part 2.
\\ \ \\

\begin{proof}{\bf [of Lemma~\ref{lem:strongESIregularity}]}  \\ \noindent 
{\em Part 1}.
Let $s = \sup_{f \in \cF} \E[X_f^2]$ and set  $X= X_f$ for arbitrary $f \in \cF$.
Since we assume the family is regular, we can use Proposition~\ref{prop:newversionsubgammamain} to infer that there exists $\eta^*> 0$, $C^* > 0$ such that $X \stochleq_{\eta} C^*$ for all $0 < \eta \leq  \eta^*$.

For all $\eta' > 0, \eta > 0$,  all $0< \gamma < 2$ there must be constants $C, C'> 0$, such that for all $p> 0, q > 0$ with $1/p + 1/q=1$, it holds that :
\begin{align*}
& \E[|X|^{2+k} \exp(\eta X)] \\
&
 = \E[\ind{X \leq -1 } |X|^{2} \cdot \left( |X|^{k} \exp(\eta X) \right)] 
+ \E[\ind{-1 < X <  1 } |X|^{2+k} \exp(\eta X)] 
+ \E[\ind{X \geq 1} |X|^{2+k} \exp(\eta X)]  \\ &  \leq 
C' \E[\ind{X \leq -1 } X^{2}] +
\exp(\eta) \E[\ind{-1 < X <  1 } X^2] + 
\E[\ind{X\geq 1} |X|^{2-\gamma} \cdot \left( |X|^{k+ \gamma} \exp( \eta X) \right) ]
\\ & \leq (C' + \exp(\eta)) s (\E[X^2]/s)^{1-\delta} + 
C \E[\ind{X \geq 1} X^{2-\gamma}  \exp( (\eta' + \eta) X) ]
\\ & \leq 
(C'+ \exp(\eta) s^{\delta} \cdot (\E[X^2])^{1-\delta} + 
C \left( \E[(\ind{X \geq 1} X^{2-\gamma})^{p} ] \right)^{1/p} 
\left( \E[\exp(q (\eta'+ \eta) X) ] \right)^{1/q}, 
\end{align*}
where we in the first inequality we used that $|X|^ k \exp(\eta X)$ is bounded on $X \leq -1$ and in the second we used that $|X|^{k+\gamma} \exp(\eta X)$ is bounded by a constant times $\exp((\eta'+\eta) X)$ on  $X \geq 1$.
Then we used H\"older's inequality and the fact that $\E[X^2]/s \leq 1$. We now take $0 < \delta \leq 1$ as in the theorem statement and bound the second term further setting $1/p = 1-\delta$, $1/q = \delta$ and $\gamma = 2\delta$ (so that  $1/p+1/q=1$ as required and $(2-\gamma) p =2$):
\begin{align*}
& \left( \E[(\ind{X \geq 1} X^{2-\gamma})^{p} ]\right)^{1/p} 
\left( \E[\exp(q (\eta'+ \eta) X) ] \right)^{1/q} \\
& = \left( \E[(\ind{X \geq 1} X^{2-\gamma})^{p}] \right)^{1-\delta} 
\left( \E[\exp(\delta^{-1} (\eta'+ \eta) X) ] \right)^{\delta} \\
& \leq \left( \E[\ind{X \geq 1}  X^{2}] \right)^{1-\delta} 
\left( \E[\exp(\delta^{-1} (\eta'+ \eta) X) ] \right)^{\delta} \leq \left( \E[  X^{2}] \right)^{1-\delta}  C^*
\end{align*}
where the final equation follows for the specific choice  $\eta' =  \eta^*/(2 \delta)$ and any $\eta$ with $0 < \eta \leq \eta^{\circ} := \eta^*/(2\delta)$. 
Combining the two equations we find that for any such $\eta$, using that  the constants $C$ and $C^*$ do not depend on the $f$ with $X=X_f$, the result (\ref{eq:strongesiregularity}) follows. 
\\ \noindent
{\em Part 2}.
Let $X= X_f$ for arbitrary $f \in \cF$. First assume (\ref{eq:tallball}) for the family $\{X_f: f \in \cF\}$. We have:
\begin{align}\label{eq:variationa}
    & \E[X^2] = \E[X^2 \ind{X \geq 0}]
    + \E[X^2 \ind{X < 0 ; X^2 \leq C}] 
    + \E[X^2 \ind{X< 0; X^2 > C}] \\ \label{eq:variationb}
    & \leq \E[X^2 \exp(\eta X) \ind{X \geq 0}]
    + \E[X^2 \ind{X < 0 ; X^2 \leq C}] +
    \E[X^2 \ind{X^2 > C}] \\ \nonumber
    & \leq \E[X^2 \exp(\eta X) \ind{X \geq 0}]
    + \exp(\eta^{\circ} \sqrt{C} )\E[X^2 \ind{X < 0 ; X^2 \leq C}e^{\eta X}] +
    c \E[X^2]
\end{align}
for some $0 < c < 1$. Moving the rightmost term to the left-hand side and dividing both sides by $1-c$, we find that
$$
\E[X^2] \leq \frac{1}{1-c}\left( \exp(\eta^{\circ} \sqrt{C}) \E[X^2 \exp(\eta X)]
\right),
$$
which is what we had to prove. 
In case that (\ref{eq:tallball}) holds the family $\{(X_f)_{-}: f \in \cF\}$, the result follows by a minor variation of the above argument:
the rightmost term in (\ref{eq:variationa}) can then be bounded 
by $c \E[X_{-}^2]$, so the rightmost term in (\ref{eq:variationb}) can be replaced by $c \E[X_{-}^2]$, and then the final inequality still holds. 
\end{proof}
\\ \ \\ \noindent
\begin{proof}{\bf [of Theorem~\ref{thm:strongesib}]}
{\em Part 1}.
A short calculation using Jensen's inequality
shows that  for all $c, c^*, \eta >0$, we have
\begin{align}\label{eq:morningbound}
( X_f - c^* \cdot \E[X_f] + \eta c X_f^2
)^2 \leq 
 3  X_f^2 + 3 c^{*2} (\E[X_f])^2 +  3 \eta^2 c^2 X_f^4. 
\end{align}
Take $0 < c^*< 1$ and $\beta \in [0,b)$ as in the theorem statement. Set $\delta := 1-\beta$ and choose $\eta^{\circ}$ for this $\delta$ as in Lemma~\ref{lem:strongESIregularity}, Part 1 (which we can use because $0 < \delta \leq 1$). Without loss of generality let $\eta^{\circ} \leq 1$. We find for all $0 < \eta < \eta^{\circ}$, that for some $\eta'$ with $0 < \eta'< \eta^{\circ}$, and
for some constants $C_0, C_1, C_2, C_3, C_4, C^{\circ} > 0$, for $ \delta' = 2 \delta - \delta^2$ that 
\begin{align*}
&    \E[\exp(\eta ( X_f - c^* \cdot \E[X_f] + \eta c 
X_f^2
)]
\\
    & \leq 1 + \eta \E[ X_f - c^* \cdot \E[X_f] + \eta c 
    X_f^2 
    ]
    + \frac{1}{2} \eta^2 \E[( X_f - c^* \cdot \E[X_f] + \eta c 
    X_f^2
    )^2 e^{\eta'X_f}]  \\
    & \leq 1 + \eta (1-c^*) \cdot \E[X_f] + \eta^2 c \E[X_f^2]
+ \frac{3}{2}  \eta^2 \left(c{^*2} \E[X_f^2] \E[\rme^{\eta'X_f}]
    +  \E[X^2_f e^{\eta'X_f}] + 3 c^2 \E[X^4_f \rme^{\eta'X_f}] \right)\\
    & \leq 1 + \eta (1-c^*) \cdot \E[X_f] + \eta^2  c \E[X_f^2]
+ \frac{3}{2} c^{*2} \eta^2 C^* \E[X_f^2]
    + \frac{3}{2} (1 + c^2) \eta^2 C^{\circ} (\E[X_f^2])^{1-\delta} \\
    & \leq 1 + \eta (1-c^*) \cdot \E[X_f] + \eta^2   C_1 B \E[- X_f]^{1-\delta}
    + \eta^2 C_2 B^{1-\delta}  (\E[- X_f])^{(1-\delta)^2} \\
   & \leq 1 + \eta (1-c^*) \cdot \E[X_f] + \eta^2   C_3 \E[- X_f]^{1-\delta'}
    + \eta^2 C_4  (\E[- X_f])^{(1-\delta')} \\
    & \leq 1 + \eta \left(  (1-c^*) \cdot \E[X_f] + 
    \eta   \frac{C_3+C_4}{(1  - c^*)^{1-\delta'}} \left( (1 -c^*) \E[- X_f]\right)^{1-\delta'} \right) \\ &
    \leq 1 + \eta \left(  (1-c^*) \cdot \E[X_f] +   C^{\circ} \eta^{1/\delta} +  (1 - c^*)\E[- X_f]\right) \leq 1 + \eta \cdot C^{\circ} \eta^{1/\delta}
\leq e^{\eta C^{\circ} \eta^{1/\delta}}. \end{align*}
Here the first inequality is our extended Taylor approximation given by Lemma~\ref{lem:taylor}, stated and proved further below. For the second we used (\ref{eq:morningbound}). The third follows by Lemma~\ref{lem:strongESIregularity}, Part 1, applied with $k=0$ (for the first and second term within the rightmost brackets, and for determining $C^*$) and $k=2$, for the third term within those brackets, and with constant 
$C_0$ taken to be the maximum of the two corresponding constants $C^{\circ}$ in that lemma obtained with $k=0$ and $k=2$. The fourth follows from 
the $\beta$-Bernstein  condition (applied to the two final terms) for $\beta = 1-\delta$,  which holds by assumption. The fifth follows because
by Cauchy-Schwarz,  
$$\E[- X_f]^{1-\delta} =  \E[- X_f]^{1-\delta'}
\cdot  \E[X_f]^{\delta'- \delta} \leq 
\E[- X_f]^{1-\delta'} \cdot 
{\E[X_f^2]}^{(\delta'- \delta)/2}
$$
and the latter factor is bounded since we assume  the $\{X_f \}$ to be  regular. The sixth inequality above is just rearranging, and the seventh inequality is a `linearization' step. To see how it follows, note first that, 
for $p, q> 0 $ with  $1/p + 1/q = 1$, Young's inequality,
  $xy \leq |x|^p/p + |y|^q/q$, implies that for $0 < \beta < 1$, 
  \begin{equation}\label{eq:useful-young-inequality}
    ab^\beta \leq \frac{1-\beta}{\beta}(\beta a)^{\frac{1}{1-\beta}} + b
  \end{equation}
  which follows for $a,b>0$ by taking $\beta = 1/p$, $a = x^{p}$, and $b=y$. We apply this with $\beta = 1 -\delta'$, $b= (1 - c^*) \E[-X_f]$, $a = \eta (C_3+C_4)  / (1  - c^*)^{1-\delta'}$.
  The final inequality then gives the desired result.  

{\em Part 2\/} is similar to 1 but  much easier; we omit the details. 
\end{proof}
We end the section with the statement and proof of the validity of the second order Taylor approximation of the moment generating function, as used in the proof of Lemma~\ref{lem:strongESIregularity}.
 \begin{lemma}\label{lem:taylor}{\bf [``Extended Taylor'']}
  Suppose that  $\E[X^2] < \infty$ and also $\E[\rme^{\eta X}] < \infty$  for all $\eta \in [0,\eta_{\max}]$ and let $\eta^*$ be any number with $0 < \eta^* < \eta_{\max}$. Then for all $0 < \eta < \eta^*$ we have:
  \begin{equation}
      \E[\rme^{\eta X}] = 1 + \eta \E[X] + \frac{1}{2} \eta^2 \E[X^2 \rme^{\eta'X}]
  \end{equation}
  for some $\eta'$ with $\eta \leq \eta' < \eta^*$.
  \end{lemma}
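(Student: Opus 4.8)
The plan is to read the right-hand side as the second-order Taylor expansion with Lagrange remainder of the moment generating function $g(\eta) := \E[\rme^{\eta X}]$; the only substantive work is justifying two differentiations under the expectation, after which the identity is immediate. First I would fix auxiliary constants $\eta_0, \eta_1$ with $\eta^* < \eta_0 < \eta_1 \le \eta_{\max}$, and note that for every $\eta \in [0,\eta_0]$ and every real $x$,
\begin{equation*}
|x|\,\rme^{\eta x} \le |x| + C_1\,\rme^{\eta_1 x}, \qquad x^2\,\rme^{\eta x} \le x^2 + C_2\,\rme^{\eta_1 x},
\end{equation*}
where $C_1, C_2$ depend only on $\eta_1 - \eta_0$: for $x \le 0$ both bounds are trivial since $\rme^{\eta x} \le 1$, and for $x > 0$ they hold because $t \mapsto t\,\rme^{-(\eta_1-\eta_0)t}$ and $t \mapsto t^2\,\rme^{-(\eta_1-\eta_0)t}$ are bounded on $\reals^+$. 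Since $\E[X^2] < \infty$ (hence $\E|X| \le 1 + \E[X^2] < \infty$) and $\E[\rme^{\eta_1 X}] < \infty$ by hypothesis, the maps $x \mapsto x\,\rme^{\eta x}$ and $x \mapsto x^2\,\rme^{\eta x}$ are dominated, uniformly over $\eta \in [0,\eta_0]$, by functions of $X$ with finite expectation.

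With this uniform domination, the standard theorem on differentiation under the integral sign (and dominated convergence for continuity of the resulting expectations) gives that $g$ is twice continuously differentiable on $[0,\eta_0)$, with $g'(\eta) = \E[X\,\rme^{\eta X}]$ and $g''(\eta) = \E[X^2\,\rme^{\eta X}]$, one-sided derivatives being understood at $\eta = 0$; in particular $g(0) = 1$ and $g'(0) = \E[X]$. Now fix $0 < \eta < \eta^* < \eta_0$. Applying Taylor's theorem with Lagrange remainder to $g$ on $[0,\eta]$ produces some $\eta'$ strictly between $0$ and $\eta$ (in particular $\eta' < \eta^*$, which is all that is needed in the sequel) with
\begin{equation*}
g(\eta) = g(0) + \eta\, g'(0) + \tfrac{1}{2}\eta^2\, g''(\eta') = 1 + \eta\,\E[X] + \tfrac{1}{2}\eta^2\,\E\!\left[X^2\,\rme^{\eta' X}\right],
\end{equation*}
and since $\eta' < \eta < \eta_{\max}$ the quantity $\E[X^2\,\rme^{\eta' X}]$ is finite; this displayed identity is exactly the claim. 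An equivalent route is to use the integral form of the remainder, $g(\eta) = 1 + \eta\,\E[X] + \int_0^\eta (\eta - t)\,g''(t)\,\rmd t$, and invoke the mean value theorem for integrals with the nonnegative weight $\eta - t$ and the continuous integrand $g'' \ge 0$.

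The only nontrivial ingredient is the uniform domination that legitimizes differentiating under the expectation; everything after that is bookkeeping, and the step likely to need the most care is the right tail, where $\rme^{\eta x}$ must be absorbed into $\rme^{\eta_1 x}$ for some $\eta_1 > \eta^*$ — this is precisely why the hypothesis assumes $\E[\rme^{\eta X}] < \infty$ up to some $\eta_{\max} > \eta^*$ rather than only at $\eta^*$ itself. The left tail is benign since $\rme^{\eta x} \le 1$ there and $\E[X^2] < \infty$ controls the polynomial prefactors, and the one-sided behaviour at $\eta = 0$ causes no difficulty because the dominating functions are valid on the whole interval $[0,\eta_0]$.
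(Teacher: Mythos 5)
Your proof is correct, and it takes a genuinely different route from the paper's. You establish a uniform domination of $|x|\rme^{\eta x}$ and $x^2\rme^{\eta x}$ over $\eta\in[0,\eta_0]$ by integrable functions of $X$ (using $\E[X^2]<\infty$ for the left tail and $\E[\rme^{\eta_1 X}]<\infty$ for some $\eta_1>\eta_0$ for the right tail), which legitimizes two differentiations under the expectation, including a one-sided derivative at $\eta=0$; you then invoke the Lagrange form of Taylor's theorem directly on $[0,\eta]$. The paper instead sidesteps the endpoint entirely: it expands around an interior point $\eta_0\in(0,\eta^*)$, where the moment generating function is automatically smooth, and then lets $\eta_0\downarrow 0$, using dominated convergence to pass to the limit in the first- and second-order terms. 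Both arguments are sound. Yours is more direct and produces the expansion in one step, at the cost of having to check that Taylor's theorem with Lagrange remainder is valid when the derivative at the left endpoint is only one-sided (it is, since the Rolle-type proof needs only differentiability on the open interval together with continuity of $g$ and $g'$ on the closed one, which your domination argument supplies); the paper's limiting argument avoids that check but has to justify convergence of the remainder term. One discrepancy worth flagging: your $\eta'$ lands in $(0,\eta)$, whereas the lemma as stated asks for $\eta\leq\eta'<\eta^*$. This appears to be a typo in the statement rather than a gap in your argument --- the paper's own proof likewise produces an $\eta'\in[0,\eta]$, and the only property used downstream (in the proof of Theorem~\ref{thm:strongesib}, Part 1) is that $0<\eta'<\eta^{\circ}$, which your version delivers, as you correctly note.
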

  This is just the standard Taylor approximation of the moment generating function for random variable $X$ at $\eta = 0$. However, in this article we need this approximation also for the case that $\E[\rme^{\eta X}] = \infty$ for all $\eta < 0$ (e.g.~if $X$ has polynomial left tail), in which the standard Taylor's theorem does not apply any more, since the standard (two-sided) derivative at $\eta = 0$ is undefined. The lemma shows that nevertheless, everything still works as one would expect.  
   \begin{proof}
   Fix $\eta_0 \in (0,\eta^*)$. Then all derivatives of  $\E[\exp(\eta X)]$ exist at $\eta$ with $\eta_0 \leq \eta \leq \eta^*$, so that:
\begin{align}\label{eq:taylormade}
    \E[\rme^{\eta X}] = \E[\rme^{\eta_0 X}]  + (\eta - \eta_0) \E[X \rme^{\eta_0 X}] + \frac{1}{2} (\eta- \eta_0)^2 \E[X^2 \rme^{s(\eta_0,\eta) X}]
\end{align}
with $s$ some function $s:[0,\eta^*]^2 \rightarrow [\eta_0,\eta]$. Since 
\begin{align*}
    |\rme^{\eta X} X| \leq |\rme^{\eta X_+} X|
    \leq |\rme^{\eta^* X_+} X| \leq |X_-|
    + |\ind{X \geq 0} \rme^{\eta^* X_+} X|
    \leq |X_-| + |\rme^{\eta^* X} X|
\end{align*}
and we know  $\E[ |\rme^{\eta^* X} X|] < \infty$, we can use the dominated convergence theorem to conclude
that $\lim_{\eta_0 \downarrow 0} \E[X \rme^{\eta_0 X}] = \E[X]$. Analogously one shows that 
$\lim_{\eta_0 \downarrow 0} \E[X^2 \rme^{s(\eta_0,\eta) X}] = \E[X^2 \rme^{\eta'X}]$ for an $\eta' \in [\eta_0,\eta]$.
The result now follows by using these two limiting results in taking the limit for $\eta_0 \downarrow 0$ in (\ref{eq:taylormade}). 
   \end{proof}

\section{Proofs for Section~\ref{sec:random}}
\begin{proof}{ \bf (of Theorem~\ref{thm:ESIbothways})}
 Inequality \eqref{eq:juditha} 
 follows from Proposition \ref{prop:esi_characterization}, applied with $X = \hat\eta X_{\hat\eta}, Y = \hat\eta Y_{\hat\eta}, \eta =1$. \eqref{eq:partialconverse} follows from Proposition~\ref{prop:exponential_tail_esi}, with $X, Y $ and $\eta$ set in the same way (see the remark at the end of the proposition statement).
 
 Now we prove \eqref{eq:inexp}. Define the random variable $W_{\hat\eta} \coloneqq X_{\hat\eta} - Z_{\hat\eta}$, and for $(a,k)\in (0,\infty) \times \mathbb{N}$ define the event $\cE_{a,k} \coloneqq \left\{ a\cdot (k-1)\leq \hat\eta  W_{\hat\eta} \leq  a k\right\}$. With $Z=\hat\eta W_{\hat\eta}$, we will first show that
 \begin{align} \limsup_{a\downarrow 0}\sum_{k=1}^{\infty} a k\cdot P(\cE_{a,k}) \leq  \int_{0}^\infty P(Z\geq z) \rmd z. \label{eq:limitprob}
\end{align}
For $a,b>0$ and $k_{a,b} \coloneqq \floor{b/a}$, we have
\begin{align}
\sum_{k=1}^{k_{a,b}} a k \cdot P (\cE_{a,k})& = \sum_{k=1}^{k_{a,b}}  a k \cdot \left(P (Z > a \cdot (k-1)) - P (Z \geq a k) \right), \nonumber \\ &  \leq \sum_{k=1}^{k_{a,b}}  a k \cdot \left(P (Z \geq a \cdot (k-1)) - P (Z \geq a k) \right), \nonumber \\
& \leq \sum_{k=0}^{k_{a,b}-1} a \cdot P(Z \geq a k), \nonumber \\
& \leq a P(Z\geq 0)+\int_0^{k_{a,b}} a P(Z \geq a t) \rmd t, \quad (\text{$t\mapsto a\cdot P(Z\geq a t)$ is nonincreasing}) \nonumber \\
& = a P(Z\geq 0)+\int_0^{a k_{a,b}}  P(Z \geq z) \rmd z, \quad (\text{change of variable $y = a t$}) \nonumber  \\
& \leq a + \int_0^{b}  P(Z \geq z) \rmd z. \label{eq:lowertail}
\end{align}
Since \eqref{eq:lowertail} holds for all $a,b>0$, we have
\begin{align}
\limsup_{a\downarrow0} \sum_{k=1}^{\infty}a k\cdot P(\cE_{a,k}) = \limsup_{a\downarrow0} \left\{\sup_{b>0} \sum_{k=1}^{k_{a,b}} a k\cdot P(\cE_{a,k})\right\}  \stackrel{\eqref{eq:lowertail}}{\leq}  \int_{0}^\infty P(Z\geq z) \rmd z,
\end{align}
and thus, the desired inequality \eqref{eq:limitprob} follows. We now have, for all $a>0$,
\begin{align}
    \E \left[
    W_{\hat\eta}\right]
    & \leq
     \sum_{k=1}^{\infty}
      P\left(\cE_{a,k}\right) \cdot \E \left[\left.
      W_{\hat\eta}   \right\lvert \cE_{a,k} \right], \nonumber
    \\
    &\leq
      \sum_{k=1}^{\infty}  P\left(\cE_{a,k}\right)  \cdot \E \left[\frac{a k}{\hat\eta}\right]
       \quad (\text{by the definition of $\cE_{a,k}$})\nonumber  \\
       & = \left( \sum_{k=1}^{\infty}  P\left(\cE_{a,k}\right)  \cdot a k \right) \cdot \E \left[\frac{1}{\hat\eta}\right]. \nonumber \\
      \shortintertext{Since this inequality holds for all $a>0$, using \eqref{eq:limitprob} implies that}
\E\left[
    W_{\hat\eta}\right]
    & \leq  \E\left[\frac{1}{\hat\eta}\right] \cdot \int_0^{\infty}P (\hat\eta W_{\hat\eta}\geq t) \rmd t \nonumber  \\
      &\leq \E\left[\frac{1}{\hat\eta}\right]  \cdot \E \left[\rme^{\hat\eta W_{\hat\eta}} \right] \int_0^{\infty} \rme^{-t} \rmd t,  (\text{Markov's Inequality})\nonumber \\
      & \leq \E\left[\frac{1}{\hat\eta}\right],
\end{align}
where the last step follows from the fact that $W_{\hat\eta}\stochleq_{\hat\eta} 0$.
\end{proof}

\begin{proof}{\bf (of Proposition~\ref{prop:trans})}
Let's denote $X_{i,\eta}(Z;z^{n \m i})\coloneqq  X_{i,\eta}(z_1,\dots,z_{i-1},Z,z_{i+1},\dots,z_n)$, for all $\eta \in \cG$, $i\in [n]$, $z^{n\m i}\in \cZ^{n-1}$, and $Z\in \cZ$. In this way, \eqref{eq:assum} can be written as
\begin{align} X_{i,\eta}(Z_i; z^{n \m i}) \stochleq_{\eta} 0  ,\quad \text{for all $i\in [n]$ and $z^{n\m i}\in \cZ^{n-1}$.}  \end{align}
In particular, since this holds for all $z^{n\m i}\in \cZ^{n-1}$, we can also write \begin{align}\label{eq:chekpoint}X_{i,\eta}(Z_i; Z^{n \m i}) \stochleq_{\eta} 0.
\end{align}
Now let $Z_1^n,\dots,Z_n^n\in \cZ^n$ be $n$ i.i.d copies of $Z^n$. From \eqref{eq:chekpoint}, we have, for each $i\in [n]$, \begin{align}Y_{i,\eta}\coloneqq X_{i,\eta}(Z_{i,i}; Z_i^{n \m i}) \stochleq_{\eta} 0.\label{eq:interESI} \end{align} Since $(Y_{i,\eta})_{i\in [n]}$ are i.i.d, we can chain \eqref{eq:interESI}, for $i=1,\dots,n$, using Proposition \ref{prop:esi_sums} to get
\begin{align}
\sum_{i=1}^n Y_{i,\eta} \stochleq_{\eta} 0. \label{eq:secondCP}
\end{align}
By applying Proposition \ref{prop:randeta}, we have, for any random $\hat\eta$ in $\cG$,
\begin{align}
\E \left[\frac{\log |\cG|+1 }{\hat\eta}\right]& \geq \E \left[\sum_{i=1}^n Y_{i, \hat\eta}\right],\nonumber  \\
& = \E \left[\sum_{i=1}^n X_{i, \hat\eta}(Z_{i,i},Z^{n\m i}_i) \right],\nonumber   \\
& =\E \left[\sum_{i=1}^n X_{i,\hat\eta}(Z^n) \right],
\end{align}
where the last equality follows from the fact that $(Z_i^n)_{i\in[n]}$ are i.i.d copies of $Z^n$.
\end{proof}

\end{document}